\documentclass[a4paper,reqno,oneside,10pt]{amsart}

\usepackage[utf8]{inputenc}
\usepackage[english]{babel}
\usepackage{csquotes}		

\usepackage{geometry}

\usepackage{graphicx}
\usepackage[dvipsnames]{xcolor}
\usepackage{comment}

\usepackage[shortlabels]{enumitem} 

\usepackage{amsfonts}
\usepackage{amssymb}
\usepackage{amsthm}
\usepackage{amsmath}
\usepackage{dsfont}
    \let\mathbbm\mathds

\usepackage{mathtools}
	\mathtoolsset{centercolon} 
\usepackage{mathrsfs}
\usepackage{xfrac}

\usepackage[scr=boondox]{mathalfa}
\usepackage{tikz}
    \usetikzlibrary{hobby}
    \usetikzlibrary{arrows}

\usepackage{pgfplots}

\renewcommand{\paragraph}[1]{%
  \bigskip\noindent\textit{\underline{#1.}}
}

\makeatletter
\renewcommand\subsubsection{\@startsection{subsubsection}{3}%
  \z@{.5\linespacing\@plus.7\linespacing}{-.5em}%
  {\normalfont\itshape}}
\makeatother

\renewenvironment{proof}[1][\proofname]{%
  \par\medskip
  \noindent\textbf{{#1.}}\;\;%
}{%
  \hfill\qedsymbol\par\bigskip
}

\usepackage{aliascnt}

\newtheorem{proposition}{Proposition}[section]

\newaliascnt{theorem}{proposition}
\newtheorem{theorem}[theorem]{Theorem}
\aliascntresetthe{theorem}

\newaliascnt{lemma}{proposition}
\newtheorem{lemma}[lemma]{Lemma}
\aliascntresetthe{lemma}

\newaliascnt{corollary}{proposition}
\newtheorem{corollary}[corollary]{Corollary}
\aliascntresetthe{corollary}

\newaliascnt{question}{proposition}

\aliascntresetthe{question}

\theoremstyle{definition}

\newaliascnt{definition}{proposition}
\newtheorem{definition}[definition]{Definition}
\aliascntresetthe{definition}

\newaliascnt{assumptions}{proposition}

\aliascntresetthe{assumptions}

\newaliascnt{remark}{proposition}
\newtheorem{remark}[remark]{Remark}
\aliascntresetthe{remark}

\newaliascnt{example}{proposition}
\newtheorem{example}[example]{$\diamond$ Example}
\aliascntresetthe{example}

\usepackage[colorlinks=true,allcolors=MidnightBlue,pdfencoding=auto, psdextra]{hyperref}
\usepackage{cleveref}

\crefname{proposition}{proposition}{propositions}
\Crefname{proposition}{Proposition}{Propositions}

\crefname{theorem}{theorem}{theorems}
\Crefname{theorem}{Theorem}{Theorems}

\crefname{lemma}{lemma}{lemmas}
\Crefname{lemma}{Lemma}{Lemmas}

\crefname{corollary}{corollary}{corollaries}
\Crefname{corollary}{Corollary}{Corollaries}

\crefname{question}{question}{questions}
\Crefname{question}{Question}{Questions}

\crefname{definition}{definition}{definitions}
\Crefname{definition}{Definition}{Definitions}

\crefname{assumptions}{assumptions}{assumptions}
\Crefname{assumptions}{Assumptions}{Assumptions}

\crefname{remark}{remark}{remarks}
\Crefname{remark}{Remark}{Remarks}

\crefname{example}{example}{examples}
\Crefname{example}{Example}{Examples}

\numberwithin{equation}{section}

\title[Enthalpy dissipation and free boundary stability of the Stefan problem]{Energy dissipation and free boundary stability of the Stefan problem}
\date{\today}

\newcommand{\eps}{\varepsilon}
\newcommand{\de}{\partial}
\newcommand{\dif}{\,\text{d}}
\renewcommand{\phi}{\varphi}
\newcommand{\Interior}{\text{Int}}

\newcommand{\Hc}{\mathscr{H}}

\newcommand{\R}{\mathbb{R}}
\newcommand{\NN}{\mathbb{N}}
\newcommand{\sphere}{\mathbb{S}}
\newcommand{\spt}{\text{spt}\,}

\newcommand{\ind}{\mathbbm{1}}

\newcommand{\distop}{\mathrm{dist}}

\begin{document}

\author[F.~Paiano]{Filippo Paiano}

\author[B.~Velichkov]{Bozhidar Velichkov}

\address {Filippo Paiano \newline \indent
Dipartimento di Matematica, Universit\`a di Pisa \newline \indent
Largo Bruno Pontecorvo, 5, 56127 Pisa, Italy}
\email{filippo.paiano@phd.unipi.it}

\address {Bozhidar Velichkov \newline \indent
Dipartimento di Matematica, Universit\`a di Pisa \newline \indent
Largo Bruno Pontecorvo, 5, 56127 Pisa, Italy}
\email{bozhidar.velichkov@unipi.it}

\subjclass[2020] {
35B40, 
35K67, 
35R35, 
35R37, 
80A22 
}

\begin{abstract}
In this paper we prove the stability and the continuity of the free boundaries of two-phase and one-phase weak solutions to the Stefan problem in any dimension. Our proof has two main ingredients. The first one is an {\it energy conservation principle for weak solutions} that we derive for general domains in both the one-phase and the two-phase settings; this energy conservation principle provides the modulus of continuity for the displacement of the free boundary and is a flexible tool for analyzing long-time behavior of solutions. The second key ingredient for the control of the displacement of the free boundary is a new lower density bound for the melting region at free boundary points that move with maximal velocity. 
We also discuss some applications, showing a series of qualitative and quantitative results on the asymptotic behavior of weak solutions.
\end{abstract}
\maketitle

\tableofcontents

\section{Introduction}

The Stefan problem was introduced by Stefan \cite{Stefan1889:AcidBaseDiffusion,Stefan1889:HeatConduction,Stefan1890:Diffusion,Stefan1891:IceFormation} and by Lamé and Clapeyron \cite{LameClapeyron1831} as a model for heat conduction in phase transition phenomena. In its original formulation, the domain splits into two phases, solid and liquid, characterized by strictly negative and strictly positive temperatures $u^+$ and $u^-$, separated by a moving interface evolving with normal velocity
$$V_\nu =\frac{1}{2}\left( |\nabla u^+|- |\nabla u^-| \right),$$
where $\nu\in\sphere^{d-1}$ is the normal pointing towards the negative phase.\medskip 

In this paper, we work with the  {\it weak} (or {\it enthalpy}) {\it formulation}  of the Stefan problem (see \Cref{def:enthalpy_solution} and \Cref{sec:enthalpy-formulation}) in which the solution is a couple $(u,\mu)$, 
$$u:D\times[0,+\infty)\to\R\qquad\text{and}\qquad \mu:D\times[0,+\infty)\to[-1,1],$$
$u$ being the {\it temperature}  and $\mu$ the {\it mushy coefficient}, satisfying the integral identity 
    \begin{equation}\label{e:weak_solution-1}\tag{H-ST}
        \int_{D(t)}(u+\mu)\,\eta \dif x \bigg|_{t=t_1}^{t_2} 
        = \int_{t_1}^{t_2}\int_D (u+\mu) \de_t \eta 
            -\nabla u \cdot \nabla \eta \dif x \dif t,
    \end{equation}  
for all times $0\le t_1\le t_2\le +\infty$ and all test functions $\eta \in C^\infty_c(D\times\R)$, and the compatibility condition 
\begin{equation}\label{e:compatibility_enthalpy-phases-1}
        \mu(x,t) = 
        \begin{cases}
        \begin{array}{rl}
             1 & \text{if}\quad u(x,t)>0,\\
            -1 & \text{if} \quad u(x,t)<0,
        \end{array}
        \end{cases} \qquad\text{for almost-every}\quad x\in D.\tag{C-ST}
    \end{equation}
%
The weak formulation of the Stefan problem and the corresponding existence theory has been developed since the seminal works  of Kamin and Oleinik \cite{Kamenomostskaya,Oleinik60}, while an existence theory for weak solutions satisfying the basic regularity conditions from \Cref{def:enthalpy_solution} was developed only recently in \cite{Paiano_Velichkov2026:ellipticregularization}. The local behavior of the solutions and the evolution of the free interfaces is still far from being completely understood. Caffarelli and Evans \cite{CaffarelliEvans1983:ContinuityTwoPhaseStefan}, DiBenedetto \cite{DiBenedetto1982:Stefan1,DiBenedetto1980:Stefan2}, and Ziemer \cite{Ziemer1982:Stefan} have proved that the temperature $u$ is continuous in space-time, but this, by itself, does not allow to deduce information about the regularity of the mushy coefficient $\mu$ or about behavior of the water and the water regions 
\[
    \Omega_u^+(t) := \big\{ u(\cdot,t)>0\big\}\quad\text{and} \quad \Omega_u^-(t) := \big\{u(\cdot,t)<0\big\},
\]
which are related to $\mu$ through  the compatibility condition \eqref{e:compatibility_enthalpy-phases}.
The absence of such results is mainly due to the fact that the non-degeneracy of the temperature $u$ is not true in general; in particular, there are examples of solutions $(u,\mu)$ in which the temperature $u$ decays arbitrarily fast at points of the free boundary (see for instance \cite{Athanasopoulos_Caffarelli_Salsa_1996b,CaffarelliSalsa:GeomApproachToFreeBoundary}).\medskip

In this paper, we overcome this issue and we prove the $L^1$ continuity of the mushy coefficient $\mu$ (\Cref{thm:total-monotonicity-formula}), the stability of the two-phase free boundaries with respect to the initial datum (\Cref{thm:finite-melting}) and the stability in time of the one-phase free boundaries (\Cref{thm:finite-melting-one-phase}). For the  main ideas and the strategy of the proofs we refer to \Cref{sub:intro:energyconservation} (for \Cref{thm:total-monotonicity-formula}) and \Cref{sub:intro:stability} (for \Cref{thm:finite-melting} and \Cref{thm:finite-melting-one-phase}).\medskip

A key result for the proof of \Cref{thm:finite-melting} and \Cref{thm:finite-melting-one-phase} is the monotonicity of the \emph{total enthalpy} of $(u,\mu)$, which we prove in the following theorem.

\begin{theorem}[Monotonicity of the {\it total enthalpy}]\label{thm:total-monotonicity-formula}
    Let $D\subset\R^d$ be an open set.
    Let $g\in H^1_0(D)\cap L^1(D)$ and $\|h\|_{L^\infty(D)}\le1$.
    Let $(u,\mu)$ be the enthalpy solution in $D_\infty = D\times[0,+\infty)$ (in the sense of \Cref{def:enthalpy_solution}) with initial temperature $g$, initial mushy coefficient $h$, and with homogeneous Dirichlet Boundary conditions, that is, $u \equiv0$ on $\de_L D_\infty :=\partial D\times[0,+\infty)$.
   Then, the {total enthalpy} 
    \begin{equation}
        \mathcal{H}\big( u,\mu,h; \, t\big) := \int_D \big| u(x,t) + \big(\mu(x,t) - h(x)\big) \big| \dif x,
    \end{equation}
is finite for all $t\ge0$ and $\mathcal H(u,\mu,h;\cdot):[0,+\infty)\to[0,+\infty)$ is a continuous non-increasing function.
Moreover, the function 
$$[0,+\infty)\ni t\longmapsto \big(\mu(\cdot,t)-h\big)\in L^1(D),$$
is continuous with respect to the strong $L^1(D)$ topology.
\end{theorem}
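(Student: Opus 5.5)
Set $w(x,t):=u(x,t)+\mu(x,t)-h(x)$, so that $\mathcal H(u,\mu,h;t)=\|w(\cdot,t)\|_{L^1(D)}$. By \eqref{e:weak_solution-1}, and since $h$ does not depend on time, $w$ is a distributional solution of
$$\partial_t w=\Delta u \quad\text{in } D\times(0,+\infty),\qquad w(\cdot,0)=g,\qquad u=0 \text{ on } \partial_L D_\infty .$$
The idea is to test this equation with a smooth approximation of $\mathrm{sgn}(w)$. Since $\mu-h$ is not a function of $u$, the natural Kato-type choice is $\mathrm{sgn}(u)$ rather than $\mathrm{sgn}(w)$. The plan is therefore:

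\begin{enumerate}[(i)]
\item Show that the sign of $w$ is controlled by the sign of $u$.
\item Test with $S_\eps(u)$, where $S_\eps$ is a smooth nondecreasing approximation of $\mathrm{sgn}$ (with $S_\eps(0)=0$).
\item Pass to the limit $\eps\to0$.
\item Localize the test functions when $D$ is unbounded or irregular.
\end{enumerate}

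\textbf{Step (i).} By \eqref{e:compatibility_enthalpy-phases-1} and $|h|\le 1$:
\begin{itemize}
\item where $u>0$ we have $\mu=1$, hence $w=u+1-h>0$;
\item where $u<0$ we have $w<0$;
\item where $u=0$ we have $w=\mu-h$.
\end{itemize}
Thus $\mathrm{sgn}(u)\,w=|w|$ on $\{u\neq0\}$, and $|w|=|\mu-h|$ on $\{u=0\}$.

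\textbf{Step (ii).} Take $\eta=S_\eps(u)\,\psi_R(x)\,\chi(t)$, justified by the regularity in \Cref{def:enthalpy_solution}, namely $u\in L^2_tH^1_0$ and an appropriate time regularity or a Steklov average. The gradient term gives
$$-\int S_\eps'(u)\,|\nabla u|^2\,\psi_R\;\le\;0$$
plus a cutoff error. The time term requires a chain rule for $\int w\,\partial_t S_\eps(u)$. Here I would use the monotone structure: since $w$ is a monotone graph of $u$ (plus the time-independent shift $-h$), we have $S_\eps(u)=S_\eps(\beta(w+h))$ with $\beta$ the inverse graph, which is Lipschitz and monotone. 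The convex primitive $\Phi_\eps(s)=\int_0^{s}S_\eps(\beta(r+h))\,dr$ then satisfies $\frac{d}{dt}\int\Phi_\eps(w)=\int\partial_t w\,S_\eps(u)$. This is the standard Brézis/Alt–Luckhaus lemma for $\partial_t b(u)$. Letting $\eps\to0$, $\Phi_\eps(w)\to|w|$, and we obtain for $t_1\le t_2$
$$\int_D|w(t_2)|\,\psi_R\;\le\;\int_D|w(t_1)|\,\psi_R+\int_{t_1}^{t_2}\!\!\int|\nabla u|\,|\nabla\psi_R|.$$

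\textbf{Step (iii)–(iv): finiteness and monotonicity.} Finiteness at $t=0$ holds since $\mathcal H(0)=\|g\|_{L^1}$. Sending $R\to\infty$ requires controlling the cutoff term. I would use the $L^1$-contraction structure, or simply choose $\psi_R$ with $\|\nabla\psi_R\|_{L^2}$ bounded in dimension $d\ge3$ together with $\nabla u\in L^2$. Alternatively, one can use $\nabla\psi_R$ supported on the annulus $\{R<|x|<2R\}$ and $u\in L^2L^2$ on bounded time intervals. This yields that $\mathcal H$ is finite and non-increasing.

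\textbf{Continuity.} This is the main obstacle. From the equation, $t\mapsto w(t)$ is weakly continuous in the distributional sense. Combined with the monotonicity of the norm $t\mapsto\|w(t)\|_{L^1}$, weak-$L^1$ continuity together with convergence of norms would give strong continuity. To get norm convergence I would proceed as follows.
\begin{itemize}
\item Right continuity: lower semicontinuity of the norm under weak convergence plus monotonicity gives $\lim_{s\downarrow t}\|w(s)\|=\|w(t)\|$.
\item Left continuity: this does not follow from semicontinuity. I would instead apply the contraction argument to two time-shifted solutions, $w(\cdot)$ and $w(\cdot+\tau)$, which should give
$$\|w(t+\tau)-w(t)\|_{L^1}\le\|w(\tau)-w(0)\|_{L^1}.$$
Right-continuity at $0$ then controls everything.
\end{itemize}
Weak convergence here requires equi-integrability, which follows from the $\psi_R$ tail estimate.

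\textbf{Continuity of $\mu-h$.} Since $u\in C([0,\infty);L^2)$ (or is continuous in space-time by \cite{CaffarelliEvans1983:ContinuityTwoPhaseStefan}) and $u(t)\in L^1$ uniformly, subtracting $u$ from $w$ gives the strong $L^1$ continuity of $\mu-h$.
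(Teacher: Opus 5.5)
Your Step (i) is correct (it is the paper's observation that $\mathcal H(t)=\|u(\cdot,t)\|_{L^1(D)}+\|\mu(\cdot,t)-h\|_{L^1(D)}$). The argument breaks down in Step (ii), because $\Phi_\eps(w)$ does \emph{not} converge to $|w|$. Since $\beta(r)=\mathrm{sign}(r)\,(|r|-1)^+$ vanishes on $[-1,1]$ and $|h|\le1$, the integrand $S_\eps(\beta(r+h))$ vanishes for every $r\in[-1-h,1-h]$, an interval containing $0$. Hence $\Phi_\eps(s)\to(|s+h|-1)^+$, and so $\Phi_\eps(w)\to(|u+\mu|-1)^+=|u|$. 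For instance, $h=0$, $u=0$, $\mu=\tfrac12$ gives $\Phi_\eps(w)=0\ne|w|$; on $\{u>0\}$ the latent part $1-h$ is lost as well. So your computation only proves $\int_D|u(\cdot,t_2)|\psi_R\le\int_D|u(\cdot,t_1)|\psi_R+(\text{cutoff error})$. That is the $L^1$-decay of the temperature, which the paper already has from \Cref{prop:L1-integrability}. The actual content of the theorem, the control of $\|\mu(\cdot,t)-h\|_{L^1}$, is missing.

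This is structural, and no other choice of $S_\eps$ repairs it. For any nondecreasing Lipschitz $\psi$, the chain-rule primitive produced by the test $\psi(u)$ is $\psi(0)\,w+\Psi(u)$ with $\Psi(z)=\int_0^z(\psi-\psi(0))$. So tests that are functions of $u$ see $\mu$ only through the linear quantity $\int w$, which coincides with $\int|w|$ only when $w$ has a sign. Equivalently, the term $\iint S_\eps'(u)|\nabla u|^2\psi_R$ that you drop is, in the limit, exactly the heat flux into the free boundary; for one-phase solutions it equals the increase of $\|\mu-h\|_{L^1}$. This is why the paper reduces to one-phase solutions. For them $u+\mu-h\ge0$ (\Cref{prop:one-phase:increasing-enthalpy}), so testing with the time-independent cutoff $\rho_R$ and using monotone convergence gives conservation of $\int(u+\mu-h)=\int|u+\mu-h|$ for entire one-phase solutions (\Cref{lemma:energy-conservation}). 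The positive and negative contributions $\mathcal H^\pm$ are then controlled by comparison with the entire one-phase solutions issued from $(g^\pm,h)$, and the same argument restarted at an arbitrary $t_1$ yields monotonicity.

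The continuity part has independent problems.
\begin{itemize}
\item Weak convergence plus convergence of norms does not imply strong convergence in $L^1$: there is no Radon--Riesz property, since $1+\sin(nx)\rightharpoonup1$ weakly in $L^1(0,2\pi)$ with equal norms. Even granting monotonicity, your argument only gives right-continuity of $t\mapsto\|w(\cdot,t)\|_{L^1}$.
\item The time-shift bound $\|w(\cdot,t+\tau)-w(\cdot,t)\|_{L^1}\le\|w(\cdot,\tau)-w(\cdot,0)\|_{L^1}$ is an $L^1$-contraction between two non-stationary solutions. That is harder than the comparison with the stationary state $h$ and runs into the same obstruction.
\item $u\in C([0,\infty);L^2)$ with a uniform $L^1$ bound does not give strong $L^1(D)$-continuity of $u$ when $D$ is unbounded, because tightness is needed. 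The paper gets it from the domination $|u(\cdot,t)|\le K_{t-\underline t}\ast|u(\cdot,\underline t)|$ and Fatou's lemma. It then proves continuity of $\mu$ via the time-monotonicity of $\mu$ for one-phase solutions and of the mushy region (Berger--Rogers), which forces weak-$\ast$ limits of $\ind_{\{\mu=\pm1\}}$ to be characteristic functions.
\item The cutoff error should be handled by moving the derivative onto $\psi_R$ and using $u\in L^1(D_T)$, since $\|\nabla\psi_R\|_{L^2}^2\sim R^{d-2}$ is unbounded for $d\ge3$.
\end{itemize}
Once a correct monotonicity statement is available, the $\mu$-continuity comes almost for free. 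Apply it to the solution restarted at $t_1$, whose initial mushy coefficient is $\mu(\cdot,t_1)$, and use that $u(\cdot,t_2)$ and $\mu(\cdot,t_2)-\mu(\cdot,t_1)$ have the same sign a.e. This gives $\|\mu(\cdot,t_2)-\mu(\cdot,t_1)\|_{L^1}\le\|u(\cdot,t_1)\|_{L^1}-\|u(\cdot,t_2)\|_{L^1}$ for $t_1<t_2$, so the $L^1$-continuity of $u$ suffices.
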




\begin{theorem}[Stability of the mushy region]\label{thm:finite-melting}
    Let $D\subset\R^d$ be an open set and $\ell \in (0,1)$.
    Let $(u,\mu)$ be an enthalpy solution (in the sense of \Cref{def:enthalpy_solution}) in $D \times[0,+\infty)$ with homogeneous Dirichlet boundary conditions $u\equiv0$ on $\de_L D_\infty := \partial D \times(0,+\infty)$, initial temperature $g\in H^1_0(D) \cap L^1(D)$, and initial mushy coefficient $h\in L^\infty(D)$ that satisfies 
    \begin{equation}\label{e:intro-phase-separation}
    |h(x)|\le \ell <1 \qquad\text{for almost every}\quad x \in D.
\end{equation}
    Then, there exists $R_g := R_g(d,\ell,\| g\|_{L^1(D)})>0$, such that they hold the following inclusions
    \[
        \Omega^+_u(t) \subset \Omega_g^+ + B_{R_g} 
            \qquad \text{and}\qquad
        \Omega^-_u(t) \subset \Omega_g^- + B_{R_g}  \qquad\text{for all}\quad t\ge0.
    \]
    Moreover, $R_g \to 0$ as $\|g\|_{L^1(D)}\to0$.
\end{theorem}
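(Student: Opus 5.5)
We prove the inclusion for $\Omega^+_u(t)$; the one for $\Omega^-_u(t)$ follows by the symmetry $(u,\mu,g,h)\mapsto(-u,-\mu,-g,-h)$. Fix $t>0$ and a point $x_0\in\Omega^+_u(t)$ with $\distop(x_0,\Omega^+_g)>R$. We want to show that this forces $\|g\|_{L^1(D)}$ to be bounded from below by a quantity that blows up as $R$ grows. Two mechanisms are involved: a dissipation bound (the total enthalpy does not grow) and a lower density bound (melting, once it happens, occupies a definite volume).

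\textbf{Step 1: the dissipation bound.} We apply \Cref{thm:total-monotonicity-formula}. At time $0$ we have $\mu(\cdot,0)=h$, so
$$\mathcal H(u,\mu,h;t)\le \mathcal H(u,\mu,h;0)=\|g\|_{L^1(D)}.$$
Let $x\in\Omega_u^+(t)\setminus\Omega_g^+$. There $\mu(x,t)=1$ and $u(x,t)>0$, so
$$u+\mu-h\ge 1-\ell>0.$$
It follows that
$$(1-\ell)\,\big|\{x\in\Omega_u^+(t):\distop(x,\Omega_g^+)>R/2\}\big|\le \|g\|_{L^1(D)}.$$
In words, the volume melted far from the initial liquid region is controlled by $\|g\|_{L^1}$. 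The same holds for every intermediate time $s\le t$.

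\textbf{Step 2: reduction to a density statement.} To conclude, we need a lower bound on $|\Omega_u^+(s)\cap B_r(y)|$ at some time $s$ and some point $y$ with $B_r(y)$ far from $\Omega^+_g$. Here $r$ should grow with $R$, or at least be a fixed multiple of $R$. We use the continuity of $t\mapsto\mu(\cdot,t)$ in $L^1$ from \Cref{thm:total-monotonicity-formula}. Define $s$ as the first time at which the positive phase reaches the set $\{\distop(\cdot,\Omega_g^+)\ge R/2\}$. The positive phase moves continuously in the $L^1$ sense, and up to time $s$ it is contained in $\Omega_g^+ + B_{R/2}$. Between $\distop=R/2$ and $\distop=R$, the temperature $u$ is positive near the front while it is $\le0$ or mushy outside the melted set. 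This lets us compare $u$ in a parabolic cylinder with a caloric barrier.

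\textbf{Step 3: the main obstacle, a lower density bound at fastest-moving points.} We must prove that at a point $y$ where the free boundary moves with maximal velocity, i.e.\ where it first reaches a new distance level, the melted region has density bounded below:
$$|\Omega_u^+(s)\cap B_r(y)|\ge c(d,\ell)\,r^d.$$
We would first try the following argument. In $\{\distop(\cdot,\Omega_g^+)>R/2\}$ before time $s$, the function $u$ is nonpositive and $\mu\le\ell$ where unmelted. For a melted point to appear, heat must flow in. Testing \eqref{e:weak_solution-1} with a cutoff $\eta$ supported in $B_r(y)$ shows that the increase of $\int\mu\eta$ is at least $(1-\ell)$ times the melted volume. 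The flux term $\int\nabla u\cdot\nabla\eta$ is controlled by the $L^1$ smallness of $u^+$ through the parabolic estimate. Choosing $r\sim R$ and iterating over dyadic distance levels $R/2,3R/4,\dots$ should give that each level crossing costs melted volume $\gtrsim (1-\ell)R^d$, unless $u$ degenerates. The degenerate case is exactly where non-degeneracy fails in general. To handle it, we would exploit that a point $x_0$ at distance $R$ lies in $\Omega_u^+(t)$ with $u(x_0,t)>0$, and use a backward propagation of positivity (Harnack for the caloric function in the open set $\{u>0\}$) to produce a ball of melted material.

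\textbf{Conclusion.} Combining the volume lower bound with Step 1 gives
$$c(d,\ell)R^d\le \|g\|_{L^1(D)}/(1-\ell).$$
Hence $R_g:=C(d,\ell)\|g\|_{L^1}^{1/d}$ works, and $R_g\to0$ as $\|g\|_{L^1(D)}\to0$.
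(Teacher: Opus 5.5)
Your Step 1 is correct and is exactly the paper's measure estimate (\Cref{lemma:measure-est}). However, there is a genuine gap at Step 3, which is the heart of the theorem: you never prove the lower density bound, and the mechanisms you sketch cannot produce it.

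Testing \eqref{e:weak_solution-1} with a time-independent cutoff $\eta$ gives $(1-\ell)\,|\{\text{melted}\}\cap\{\eta=1\}|\lesssim\big|\int\!\!\int\nabla u\cdot\nabla\eta\big|$. Controlling the flux by the $L^1$-smallness of $u^+$ therefore yields an \emph{upper} bound on the melted volume, not a lower one. A lower bound would need a lower bound on the heat flux into $B_r(y)$, i.e.\ non-degeneracy of $u$ at the front, which fails in general. So ``unless $u$ degenerates'' is the entire problem, not a side case.

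Your fallback does not close this gap. The fact that $u(x_0,t)>0$ only gives a neighborhood of uncontrolled size. Harnack's inequality propagates lower bounds forward in time, only inside cylinders already contained in $\{u>0\}$, and its constants degenerate at the free boundary. Nothing in your argument excludes a thin finger of positivity with arbitrarily small volume reaching $x_0$.

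Two further problems:
\begin{itemize}
\item Your rate $R_g=C(d,\ell)\|g\|_{L^1}^{1/d}$ presupposes a scale-invariant density constant depending only on $(d,\ell)$. This is stronger than anything the paper proves: even \Cref{lemma:nofiltration-large} needs $r_0>1/10$ and a constant depending on $\|g\|_{L^1}$.
\item In Step 2, $L^1$-continuity of $\mu$ does not control the Hausdorff position of the front. The paper's ``first time'' relies on continuity of $u$ together with finite speed of propagation.
\end{itemize}

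You also skip two reductions the paper needs. First, it replaces $\Omega_g^+$ by a regularized set $\Omega_R^+$ satisfying the two-sided ball condition. Second, it compares $u$ with the one-phase solution $w_\eps$ with data $g_\eps\ge g^+$, $h_\eps\ge h$. Your Step 3 works directly in the two-phase problem with an arbitrary $\Omega_g^+$, where the positive phase could moreover be squeezed by the negative one.

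The missing idea is the barrier argument of \Cref{lemma:nofiltration}, carried out at the first time $t_0$ the front reaches $\de(\Omega_g^++B_{r_0})$:
\begin{itemize}
\item \Cref{prop:finite-speed-inner} gives a waiting time $t_1$ and $L^\infty$ bounds.
\item An upper barrier $w$ has a front moving with speed $\frac{C_1}{1-\ell}A^\sigma(\alpha,t)$. Here $A^\sigma$ is an inf-convolution of the trace of $u$ on a sphere, and the parabolic boundary Harnack inequality bounds $|\nabla w|$ by $A^\sigma$.
\item A lower barrier $v$ has paraboloid fronts moving with speed $\frac{c_1}{1+\ell}a(t)$, justified by the Hopf lemma.
\item Interior Harnack gives $A^\sigma\le C\,a$. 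Hence, by the time the upper front reaches $x_0$, the lower paraboloid has advanced by $\gtrsim(1-\ell)r_0$, which yields a definite volume of $\Omega_u^+(t_0)\cap B_{r_0}(x_0)$.
\end{itemize}
This comparison of the two front speeds is what replaces non-degeneracy. Combined with the measure estimate, it forces $r_0\le c^{-1}(\cdot)$.
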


\begin{theorem}[Continuity of the mushy region in the one-phase Stefan problem]\label{thm:finite-melting-one-phase}
    Let $D\subset\R^d$ be an open set and $\ell \in (0,1)$. 
    Let $(u,\mu)$ be an enthalpy solution of the one-phase Stefan problem in $D \times[0,+\infty)$ with homogeneous Dirichlet boundary conditions $u\equiv0$ on  $\de_L D_\infty := \partial D \times(0,+\infty)$, initial temperature $g\in H^1_0(D) \cap L^1(D)$, and initial mushy coefficient $h\in L^\infty(D)$ that satisfies \eqref{e:intro-phase-separation}.
    Then, there exists a modulus of continuity $R:[0,+\infty)\to[0,+\infty)$ depending on $\ell$, $d$, and $\|g\|_{L^1(D)}$ such that the following inclusions hold
    \[
        \Omega^+_u(t+s) \subset \Omega^+_u(t) + B_{\rho(t,s)}
    \qquad\text{for all}\quad t\ge0\quad\text{and}\quad s>0,
    \]
    where 
    $\rho(t,s):=R(\|u(t+s)-u(t)\|_{L^1})$. In particular, the positivity region $\Omega_u^+(t)$ varies continuously with respect to the Hausdorff distance. 
\end{theorem}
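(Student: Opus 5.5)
The plan is to restart the one-phase problem at time $t$ and reduce to a stability estimate of the type in \Cref{thm:finite-melting}. Fix $t\ge0$ and consider the time-translated pair $(\tilde u,\tilde\mu)(x,s):=(u,\mu)(x,t+s)$. By the integral identity \eqref{e:weak_solution-1}, this is again an enthalpy solution in $D\times[0,+\infty)$ with Dirichlet data, initial temperature $\tilde g=u(\cdot,t)$ and initial mushy coefficient $\tilde h=\mu(\cdot,t)$. Two facts are needed here. First, $u(\cdot,t)\in H^1_0(D)\cap L^1(D)$ for a.e. $t$ (and for all $t$ by the regularity built into \Cref{def:enthalpy_solution}). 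Second, $\|u(\cdot,t)\|_{L^1}\le \|g\|_{L^1}+C$: in the one-phase setting $u\ge0$, and \Cref{thm:total-monotonicity-formula} gives
$$\int_D u(t)\le \mathcal H(t)+\|\mu(t)-h\|_{L^1}\le\|g\|_{L^1}+\|\mu(t)-h\|_{L^1},$$
where the last term is controlled again by $\mathcal H$ together with the sign structure, since $\mu$ increases where melting occurs. The obstruction is that $\tilde h=\mu(\cdot,t)$ equals $1$ on $\Omega_u^+(t)$, so the hypothesis $|\tilde h|\le\ell$ fails. However, in the one-phase problem only melting of the region $\{\mu<1\}$ matters, and outside $\overline{\Omega_u^+(t)}$ we have $\tilde h=\mu(t)\le$ the initial value in the ice region. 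I therefore expect $\mu(\cdot,t)=h\le\ell$ on $\{\mu(t)<1\}$ up to null sets, using monotonicity of $\mu$ in time for one-phase solutions.

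The core step is a localized version of the argument behind \Cref{thm:finite-melting}. Take $x_0\in\Omega^+_u(t+s)$ with $r:=\distop(x_0,\Omega_u^+(t))>0$. On $B_r(x_0)$ the enthalpy at time $t$ is $\mu(t)=h\le\ell$ and $u(t)=0$. At time $t+s$ the point $x_0$ is melted. I will invoke the lower density bound for the melting region at free boundary points moving with maximal velocity, which is the second ingredient announced in the abstract. It should give a point $x_1$ (the first point of $B_r(x_0)$ reached, a maximal-velocity point) with
$$|\{\mu(t+s)=1\}\cap B_{r/2}(x_1)|\ge c(d,\ell)\,r^d.$$
On that set $\mu(t+s)-\mu(t)\ge1-\ell$, so
$$\|\mu(t+s)-\mu(t)\|_{L^1}\ge c(1-\ell)r^d.$$

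It remains to bound $\|\mu(t+s)-\mu(t)\|_{L^1}$ by $\|u(t+s)-u(t)\|_{L^1}$. I would apply \Cref{thm:total-monotonicity-formula} to the restarted solution, using reference $\tilde h=\mu(t)$:
$$\int_D|u(t+s)+\mu(t+s)-\mu(t)|\le\int_D u(t).$$
Since in the one-phase case $u\ge0$ and $\mu(t+s)-\mu(t)\ge0$, the left-hand side equals $\int u(t+s)+\|\mu(t+s)-\mu(t)\|_{L^1}$. Hence
$$\|\mu(t+s)-\mu(t)\|_{L^1}\le\int_D \big(u(t)-u(t+s)\big)\le\|u(t+s)-u(t)\|_{L^1}.$$
Combining the two bounds gives $r\le \big(\|u(t+s)-u(t)\|_{L^1}/c(1-\ell)\big)^{1/d}=:R(\cdot)$. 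Hausdorff continuity then follows from the $L^1$ continuity of $u$ in time, together with the trivial inclusion $\Omega_u^+(t)\subset\Omega_u^+(t+s)$ coming from monotonicity in the one-phase problem.

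The main obstacle is the density bound. Without nondegeneracy of $u$, one must show that a free boundary point advancing distance $r$ into the ice forces a melted volume of order $r^d$. I would try a barrier/comparison argument: compare with the radial one-phase solution melting a ball, using the maximal-velocity choice of point to place a ball of radius comparable to $r$ that is fully melted. The dependence on $\|g\|_{L^1}$ enters only through ensuring the constants are uniform.
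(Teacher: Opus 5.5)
Your measure step is correct and matches the paper's. Restarting at time $t$ with reference coefficient $\mu(\cdot,t)$, \Cref{thm:total-monotonicity-formula}, together with $u\ge0$ and $\mu(\cdot,t+s)\ge\mu(\cdot,t)$, gives $\|\mu(t+s)-\mu(t)\|_{L^1}\le\|u(t)\|_{L^1}-\|u(t+s)\|_{L^1}$. Also, on the open set $B_r(x_0)\subset\Interior(D\setminus\Omega_u^+(t))$ one has $\mu(\cdot,t)=h\le\ell$.

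The gap is the density step. It is the entire difficulty beyond \Cref{lemma:measure-est}, and you postulate it rather than prove it. The paper's density estimate, \Cref{lemma:nofiltration}, cannot be applied at your point $x_1$, because it has two requirements:
\begin{enumerate}[(i)]
\item the initial positivity set must satisfy a two-sided ball condition of radius $R_0\ge 10r_0$; the exterior ball gives, via \Cref{prop:finite-speed-inner}, the waiting time $t_1$ and the upper barrier $w$, and the interior ball gives the paraboloid lower barrier $v$;
\item $(x_0,t_0)$ must be the \emph{global} first time the free boundary touches $\partial(\Omega_g^++B_{r_0})$, as in \eqref{e:asymptotic-first-time}.
\end{enumerate}
Your initial set $\Omega_u^+(t)$ has no a priori geometric regularity (in particular no ball condition), and ``the first point of $B_r(x_0)$ reached'' is not a first-arrival point in this sense.

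The idea you are missing is the paper's regularization-plus-comparison device:
\begin{itemize}
\item replace $\Omega_u^+(t)$ by $\Omega_R^+(t)=\R^d\setminus\big((\R^d\setminus(\Omega_u^+(t)+B_{2R}))+B_R\big)$, which has the two-sided ball property with radius $R$ and satisfies $\Omega_u^+(t)\subset\Omega_R^+(t)\subset\Omega_u^+(t)+B_R$;
\item run a one-phase solution $w_\eps$ from data $g_{t,\eps}\ge u(\cdot,t)$ with $\{g_{t,\eps}>0\}=\Omega_R^+(t)$ and $h_t\ge\mu(\cdot,t)$, so that $\Omega_u^+(t+s)\subset\Omega^+_{w_\eps}(s)$ by comparison;
\item apply the density lemma to $w_\eps$ at its first-arrival point;
\item bound the newly melted volume of $w_\eps$ through the enthalpy; since $w_\eps(\cdot,s)\ge u(\cdot,t+s)$, it is at most $\frac1{1-\ell}\big(\|u(t)\|_{L^1}-\|u(t+s)\|_{L^1}+C\eps\big)$.
\end{itemize}

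The bound you expect, $c(d,\ell)\,r^d$ with a constant independent of the scale and of the $L^1$ norm of the data, is also stronger than anything the paper establishes. \Cref{lemma:nofiltration} only gives $c_d(r_0)(1-\ell)^{d/2}$, with $c_d$ depending on $r_0$ and $\|g\|_{L^1}$. That dependence enters through $t_1$, the $L^\infty$ bound on $u$ for $t\ge t_1$, and the Harnack and boundary Harnack constants. A bound proportional to $r_0^d$ appears only for $r_0>1/10$, in \Cref{lemma:nofiltration-large}, and even there the constant depends on $\|g\|_{L^1}$. This is why the modulus $R$ in the statement comes from inverting $c_d$ and depends on $\|g\|_{L^1}$. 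Your argument would instead produce the power law $(\delta/c(1-\ell))^{1/d}$, for which there is no justification.

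Finally, comparison with a radially melting ball does not by itself give a lower bound on the melted volume. To force a ball of radius comparable to $r$ to melt, you need a subsolution, hence a quantitative lower bound on $u$ near the moving front; that is exactly the non-degeneracy that fails for the Stefan problem. The paper avoids this differently. It ties the speed $s'(t)$ of the paraboloid lower barrier to $a(t)=u(11r_0e_1,t)$, and compares it via Harnack inequalities with the speed $A^\sigma$ of the upper barrier. This forces the lower barrier to advance a distance of order $(1-\ell)r_0$ by the time the upper barrier reaches $x_0$.
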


\begin{remark}
The condition \eqref{e:intro-phase-separation} guarantees that all the three initial phases $\Omega_g^+$, $\Omega_g^-$, and $\{g=0\}$ are are separately distinguished by a positive energy gap of $(1-\ell)$, ruling out possible anomalous spreading of the free boundaries (\Cref{prop:finite-speed-inner}).
Configurations with initial enthalpy $h$ satisfying the condition \eqref{e:intro-phase-separation} appear for instance in physical models describing the
thermal equilibrium at the melting temperature which has been reached through a slow heating or cooling, for which complex microscopic structure arises, that are approximated at macroscopic level by \eqref{e:intro-phase-separation}; see,  for instance, 
\cite{WorsterReesJones2015:SeaIce}.
The same condition also describes energetic stable configurations in models that involve more complex material, where such configuration is reached as a result of heating due to radioactive
decay or electrical resistance (see, for instance, \cite{Solomon2003:StefanProblems}). Finally, we notice that in the classical weak formulation of the Stefan problem, $h\equiv0$ in $\{g=0\}$, so the condition \eqref{e:intro-phase-separation} is automatically satisfied. For a discussion about the role of the initial enthalpy $h$ we refer to \cite{Paiano_Velichkov2026:ellipticregularization}, where we developed the corresponding existence theory for enthalpy solutions in this setting.  
\end{remark}

We notice that \Cref{thm:finite-melting-one-phase} implies that for every open set $\Omega$ one can find non-trivial solutions of the one-phase Stefan problem which are arbitrarily close to $\Omega$. 

\begin{corollary}\label{cor:asymptotic-regular}
    For all open sets $\Omega\subset \R^d$ and $\eps>0$, there exists a nonnegative $g_\eps \in H^1(\R^d)$ with $\Omega = \Omega_{g_\eps}^+$ such that the relative enthalpy solution $(u_\eps,\mu_\eps)$ in $\R^{d+1}_+$ with initial temperature $g_\eps$ and initial mushy coefficient $h\equiv0$ such that
    \[
       |\Omega_{u_\eps}^+(t)\setminus\Omega|<\eps\qquad\text{and}\qquad  \Omega \subset \Omega_{u_\eps}^+(t) \subset \Omega + B_{\eps}\qquad \text{for all}\quad t \ge0.
    \]
\end{corollary}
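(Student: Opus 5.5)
The plan is to build $g_\eps$ as a small positive function with positivity set exactly $\Omega$, and then apply \Cref{thm:finite-melting-one-phase} with $t=0$ together with the smallness of $\|g_\eps\|_{L^1}$.

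\emph{Construction of $g_\eps$.} Exhaust $\Omega$ by countably many balls $B_{r_j}(x_j)\subset\Omega$, for instance a Whitney-type covering. Set
\[
g_\eps:=\delta\sum_j c_j\,\phi_j,
\]
where $\phi_j\ge0$ is a smooth bump with $\{\phi_j>0\}=B_{r_j}(x_j)$. The weights $c_j>0$ are chosen so that $\sum_j c_j(\|\phi_j\|_{H^1}+\|\phi_j\|_{L^1})<\infty$, and $\delta>0$ is a scaling parameter. Then $g_\eps\ge0$, $g_\eps\in H^1(\R^d)\cap L^1(\R^d)$, $\Omega_{g_\eps}^+=\Omega$, and $\|g_\eps\|_{L^1}\le C\delta$.

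\emph{Inner inclusion.} We take $h\equiv0$, so $|h|\le\ell$ for any fixed $\ell\in(0,1)$, say $\ell=1/2$. Since $h\equiv0$ and $g_\eps\ge0$, the solution is one-phase with $u_\eps\ge0$. By the comparison principle for enthalpy solutions (which I would quote from the existence theory in \cite{Paiano_Velichkov2026:ellipticregularization}), $u_\eps$ lies above the caloric function of each piece. More directly, in the one-phase problem with $h=0$, every point with $u_\eps(\cdot,0)>0$ keeps $\mu_\eps=1$ for all later times. Indeed $u+\mu$ is nondecreasing along time at points where $\mu$ has reached $1$ (water does not refreeze when $u\ge0$), and then the equation forces $\Delta u$-positivity. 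So $\Omega\subset\Omega_{u_\eps}^+(t)$ for all $t$. I expect this to be the main obstacle: it needs the monotonicity of $\mu$ in time for one-phase solutions. I would obtain it from the weak formulation \eqref{e:weak_solution-1}, which gives $\de_t(u+\mu)=\Delta u\ge0$ in the distributional sense on $\{u>0\}$ and in the nondegenerate region. Alternatively I would use comparison with the solution of the heat equation restricted to $\Omega$, which is positive on $\Omega$ by the strong maximum principle; a subsolution argument then gives $u_\eps>0$ on $\Omega$.

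\emph{Outer inclusion.} \Cref{thm:finite-melting-one-phase} at $t=0$, or equivalently \Cref{thm:finite-melting} with $D=\R^d$, gives
\[
\Omega_{u_\eps}^+(t)\subset\Omega+B_{R(\|g_\eps\|_{L^1})},
\]
and $R_g\to0$ as $\|g\|_{L^1}\to0$. Taking $\delta$ small yields the radius $<\eps$.

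\emph{Measure bound.} Integrating the equation over $\R^d$ (via \Cref{thm:total-monotonicity-formula}, testing with $\eta\to1$) gives conservation of total enthalpy:
\[
\int(u_\eps+\mu_\eps)(t)=\int g_\eps,
\]
since both $u_\eps\ge0$ and $\mu_\eps\ge0$. On $\Omega_{u_\eps}^+(t)\setminus\Omega$ we have $\mu_\eps=1$, so
\[
|\Omega_{u_\eps}^+(t)\setminus\Omega|\le\|g_\eps\|_{L^1}\le C\delta<\eps
\]
after shrinking $\delta$. Since $\mu_\eps\ge0$ from the one-phase structure, this step is simple; if $\Omega$ has infinite measure, the construction keeps $\|g_\eps\|_{L^1}$ finite anyway.
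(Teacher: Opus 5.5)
Your overall route is the same as the paper's. You build a small $g_\eps\ge0$ with $\{g_\eps>0\}=\Omega$ and $\|g_\eps\|_{L^1}\lesssim\delta$; your Whitney sum works, and the paper simply takes $\big(\delta\wedge\distop(x,\R^d\setminus\Omega)\big)(2\pi)^{-d}e^{-|x|^2/2}$. The outer inclusion comes from \Cref{thm:finite-melting} with $R_g\to0$, the inner one from monotonicity of the positivity set, and the measure bound from the enthalpy.

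\textbf{Main gap: the choice of $h$.} Taking $h\equiv0$ on all of $\R^d$ does not work. Since $g_\eps>0$ on $\Omega$, the compatibility condition \eqref{e:compatibility_enthalpy-phases} at $t=0$ forces $\mu_\eps(\cdot,0)=1$ on $\Omega$. So with $h\equiv0$ there is no enthalpy solution in the sense of \Cref{def:enthalpy_solution}, and your two steps contradict each other. The inner inclusion gives $\mu_\eps(\cdot,t)=1$ on $\Omega$, hence $\int(u_\eps+\mu_\eps)(t)\dif x\ge|\Omega|$. Your conservation identity gives $\int(u_\eps+\mu_\eps)(t)\dif x=\int g_\eps\le C\delta$. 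Under the pure enthalpy reading $H(\cdot,0)=g_\eps<1$, one would even get the stationary state $u_\eps\equiv0$, so the inner inclusion would be false.

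The statement's $h\equiv0$ has to be read as $h=0$ on $\{g_\eps=0\}$, i.e.\ $h=\ind_\Omega$, which is what the paper's proof uses. \Cref{thm:finite-melting} still applies, since it only needs $|h|\le\ell$ off $\Omega_g^+\cup\Omega_g^-$. With this $h$, the measure bound must be run with $\mu_\eps-h$ instead of $\mu_\eps$:
\begin{itemize}
\item $u_\eps\ge0$, and $\mu_\eps-h\ge0$ by \Cref{prop:one-phase:increasing-enthalpy};
\item $\mu_\eps-h=1$ on $\Omega^+_{u_\eps}(t)\setminus\Omega$;
\item $\int(u_\eps+\mu_\eps-h)(t)\dif x=\|g_\eps\|_{L^1}$ by \Cref{thm:energy-conservation-law}.
\end{itemize}
This is exactly \Cref{lemma:measure-est}. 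Note that $\int(u_\eps+\mu_\eps)$ itself is infinite when $|\Omega|=\infty$, so your displayed identity fails there in any case.

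\textbf{Second issue: the inner inclusion.} Your first justification is incorrect. On $\{u_\eps>0\}$ one has $\Delta u_\eps=\de_t u_\eps$, which has no sign (think of a decaying caloric function). Also, $\mu_\eps=1$ does not imply $u_\eps>0$, so monotonicity of $\mu_\eps$ in time is not what is needed.

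Your alternative argument is the right one, and it is exactly \Cref{prop:one-phase:positivity-increasing}, so this is not a real obstacle: you can just cite it. The zero extension $\tilde v$ of the caloric function in $\Omega$ with datum $g_\eps$, paired with $\tilde\mu=\ind_\Omega$, is a subsolution. Comparison then gives $u_\eps\ge\tilde v>0$ in $\Omega$, and this holds everywhere (not just a.e.) by continuity of $u_\eps$ and the strong maximum principle for $\tilde v$ on the open set $\Omega$. This comparison requires $\tilde\mu(\cdot,0)\le h$, i.e.\ $h=1$ on $\Omega$, which is another place where $h\equiv0$ breaks.

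With $h=\ind_\Omega$ and these corrections, your proof coincides with the paper's.
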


In the remaining part of this introduction, we introduce more in details \Cref{thm:total-monotonicity-formula,thm:finite-melting,thm:finite-melting-one-phase}, namely: we discuss their physical interpretations, their proofs, the challenges they present, and which novelties they bring with the respect to the current literature on the subject.

\subsection{The energy conservation law}\label{sub:intro:energyconservation}
The Stefan problem is formally derived from the first principle of thermodynamics, usually called: the energy conservation law. 
Indeed, both the pressure and the total volume remain fixed during the phase transition, and therefore the enthalpy coincides (up to an additive constant) with the internal energy of the system.
Moreover, we decompose the enthalpy as $H = u + \mu$ where $u$ is the {\it temperature} and $\mu$ is the {\it mushy coefficient}. The mushy coefficient $\mu$ carries the information about the phase of the material and represents the statistical limit of the infinitesimal mixture between of liquid and solid phases, which can be formally stated\footnote{We notice that the validity of this formula for weak solutions $(u,\mu)$ in the sense of \Cref{def:enthalpy_solution} at a point $(x,t)$ is currently an open problem.} as
\[
    \mu(x,t) = \lim_{r\to0} \frac{\big| \{ \text{liquid at time }t\} \cap B_r(x) \big| - \big| \{\text{solid at time }t\} \cap B_r(x) \big|}{| B_r|} \in [-1,1].
\]
Thus, $\{\mu\equiv1\}$ is the liquid phase, while $\{\mu\equiv-1\}$ is the solid one.
However, this energy formulation naturally allows for $\{|\mu|<1\}$ to have nonempty interior, thereby permitting intermediate states in which liquid and solid coexist in a microscopic mixture.
Such regions have been firstly investigated (numerically) by Atthey \cite{Atthey74}, that first called \emph{mushy region at time $t\ge0$} the set
\begin{equation}\label{e:mushy-intro}
    \mathcal M(t) := \big\{ x \in D : |\mu(x,t)| <1 \big\},
\end{equation}
showing a first example where this region appears as result of a heat source. Such properties have been then widely investigated (see, for instance, \cite{Meirmanov81:ExampleMushy,Primicerio83:Mushy,RogersBerger1984:MonotonicityMushyRegionTwoPhaseStefan,FasanoPrimicerio85:Mushy-variablemelting,GotzZaltzman:NonincreaseMushyInhomogeneousPb,Meirmanov_1992}).

Once this energetic decomposition of the enthalpy as $H = u +\mu$ is established, the conservation of energy prescribes that variations in enthalpy coincide with the heat flux through the boundary, namely
\begin{equation}\label{e:heat-flux}
\frac{\dif}{\dif t}\int_D \big(u(x,t) + \mu(x,t)\big) \dif x  = \int_{\de D} \de_\nu u(x,t) \dif \Hc^{d-1}x.
\end{equation}
Formally this leads to the following nonlinear diffusion problem
\begin{equation}\label{e:stefan-inclusion}
    \de_t (u+\mu) = \Delta u,
\end{equation}
which can be made rigorous (see \Cref{def:enthalpy_solution}) in the sense of integral identities, requiring that $(u,\mu)$ satisfies \eqref{e:weak_solution-1} and \eqref{e:compatibility_enthalpy-phases-1}.
This weak formulation is called the \emph{enthalpy formulation} and it is consistent with the classical one, due to the compatibility condition \eqref{e:compatibility_enthalpy-phases-1}.

\begin{center}
    \begin{figure}[ht]
        \begin{tikzpicture}[use Hobby shortcut,scale=1.2]
		\begin{scope}
			\clip (0,0) circle (2);
			\draw[thick, fill = MidnightBlue!20] (-1,2) .. (0,1) .. (-0.5,0) .. (0,-0.1) .. (0,-0.7) .. (-1.8,-2) .. (-3,-2) .. (-2,2);
			\draw[thick, fill = black!10] (-1,2) .. (0,1) .. (-0.5,0) .. (0,0) .. (1,-1) .. (2.5,-2) .. (3,-2) .. (2,2);

			\draw[dashed, thick] (0,0) circle (1.99);
			
                \draw[thick,->] (0,1) -- (0.3,0.8);
                \draw[thick,->] (0,-0.7) -- (0.3,-0.9);
                \draw[thick,->] (1,-1) -- (1.3,-0.8);

		\end{scope}
			\node at (-1.4,0) {$\Omega_u^+(t)$};
			\node at (1,0.5) {$\Omega_u^-(t)$};
			\node at (0.3,-1.4) {$\{ u=0\}$};

            \node at (-0.3,1.6) {$\Gamma_{tp}(t)$};
            \node[rotate=-135, shift={(0,3.3)}, rotate=135] at (0,0) {$\Gamma^-_{op}(t)$};
            \node[rotate=145, shift={(0,3.3)}, rotate=-145] at (0,0) {$\Gamma^+_{op}(t)$};

            \node at (1.2,-0.7) {$\nu$};
            \node at (0.3,-0.7) {$\nu$};
            \node at (0.3,1) {$\nu$};
			
        \end{tikzpicture}
         \caption{An example of a solution of the Stefan problem with non-empty zero set. }
        \label{fig:non-empty-mushy}
    \end{figure}
\end{center}

Within this energetic framework, \Cref{thm:total-monotonicity-formula} can be interpreted as an analogue of the $L^1$-decay property for caloric functions with homogeneous Dirichlet boundary data.
Indeed, formally testing $\eta \equiv1$ in \eqref{e:weak_solution-1} yields
\begin{equation}\label{e:energy-conservation-law-intro}
\frac{\dif}{\dif t}\int_D \big(u(x,t) + \mu(x,t)\big) \dif x = 0,
\end{equation}
showing that the total energy is conserved.

This observation already appears in the works of Ushakov \cite{Ushakov1980:EnergyConservationLaw}, where the conservation law is established for one-phase solutions such that the temperature $u$ satisfies Neumann boundary conditions on $\partial D \times (0,+\infty)$, has initial datum $u(x,0)=g(x)$ with compact support, and the mushy coefficient satisfies $\mu \equiv \ell \in (-1,1)$ in $\{u=0\}$.\medskip

To extend these classical results to a more general setting, one must address certain integrability issues.
In particular, for unbounded domains $D\subset\R^d$ with $|D|=+\infty$ the integral of $(u+\mu)$ may fail to be well-defined. 
Indeed, the only a priori assumption on $\mu$ is the uniform bound $|\mu|\le 1$, which does not ensure integrability. To address this issue, we incorporate the initial datum $h(x):=\mu(x,0)$ into the definition of the total enthalpy $\mathcal H(t)$ and consider its absolute value rather than the integral itself. With this choice, at time $t=0$ the energy coincides with the $L^1$-norm of the temperature.
Within this formulation, $\mathcal H(t)$ measures the deviation of $(u,\mu)$ from the fixed zero energy level ($u\equiv 0$ and $\mu(x,t) = h(x)$). The decay of $\mathcal H(t)$ in time then follows from the energy conservation principle for one-phase solutions (\Cref{lemma:energy-conservation}) together with the Comparison Principle.

\Cref{thm:total-monotonicity-formula} also addresses the continuity of the energy $\mathcal H(t)$ in time. 
This property is not a direct consequence of a one-phase energy conservation law, rather, it reflects a genuine regularity feature of $u$ and $\mu$. 
Indeed, in the absence of additional assumptions on the initial datum $h$ (and hence on the geometry of the free boundary), uncontrolled phase mixing may occur. 
Such mixing could, in principle, produce compensating cancellations that preserve the global energy balance \eqref{e:energy-conservation-law-intro}, while still leading to discontinuities in $\mathcal H(t)$.

We rule out this phenomenon in \Cref{lemma:mono:continuity}. There we prove a stronger result, namely that $u(\cdot,t)$ and $\mu(\cdot,t)$ are separately strongly continuous in $L^1(D)$ with respect to time. 
This implies the continuity of $\mathcal H(t)$ since \eqref{e:compatibility_enthalpy-phases-1} implies that for all $t\ge 0$ and for almost every $x\in D$,
\[
    u(x,t) >0 \Rightarrow \mu(x,t) \ge h(x) 
        \qquad\text{and}\qquad
    u(x,t) < 0 \Rightarrow \mu(x,t) \le h(x).
\]
Consequently, we can rewrite 
\[
    \mathcal H(t) = \| u(\cdot,t) \|_{L^1(D)} + \| \mu(\cdot,t) - h \|_{L^1(D)},
\]
which, combined with the $L^1$-strong continuity of $u$ and $\mu$, yields the desired result.
To the best of the author’s knowledge, this result has no analogue in the existing literature and is novel even in the case of bounded domains $D \subset \mathbb{R}^d$.

\begin{remark}[On the sharp hypotheses for \Cref{thm:total-monotonicity-formula}]
    The monotonicity of $\mathcal H(t)$ for solutions defined in bounded sets $D\subset\R^d$ could be addressed through the continuity with respect initial data property for the solutions of the Stefan problem (see, for instance, \cite[Ch. I, Theorem 9]{Meirmanov_1992}). 
    The same argument could be adapted also in the unbounded case, following the strategy adapted in \cite{Paiano_Velichkov2026:ellipticregularization} for the Comparison Principle.
    Nevertheless, we favor our methodology for its flexibility, as it covers a broader class of cases with essentially no modification. 
    Specifically, \Cref{thm:total-monotonicity-formula} holds for an enthalpy solution $(u,\mu)$ given initial data $g \in L^1(D)$ and $h\in L^\infty(D)$, provided:
\begin{enumerate}[{\itshape (i)}]
    \item $u \in L^2_{loc}([0,+\infty);H^1_{loc}(D))$;
    \item\label{mono:gen:existence} There exist entire enthalpy solutions $(\overline u,\overline \mu)$ and $(\underline u,\underline \mu)$ with $\underline u, \overline u \in L^2_{loc}([0,+\infty);H^1_{loc}(\mathbb{R}^d))$, sharing the initial mushy coefficient $h$ but with initial temperatures $\overline{u}(x,0) = g(x)^+$ and $\underline{u}(x,0) = g(x)^-$;
    \item There exists a dense sequence of times $\{t_i\}_{i\in\mathbb{N}}$ in $[0,+\infty)$ such that the previous condition holds for $(u(\cdot,t_i),\mu(\cdot,t_i))$;
    \item The domain $D$, the boundary condition on $\de D \times (0,+\infty)$, and the solution $(u,\mu)$ permit the Comparison Principle for both the Stefan problem and the heat equation with respect to nonnegative entire solutions.
\end{enumerate}

These properties are naturally satisfied in our framework, though they also hold in other settings, such as homogeneous Neumann problems in smooth domains with suitable homogeneity assumptions (see \cite{GyryaSaloff-Coste2011:NeumanDirichlet-Heat-Kernels}), or for entire one-phase solutions with bounded initial temperatures, that is $g \in H^1_{loc}(\R^d) \cap L^1(\R^d) \cap L^\infty(\R^d)$ (see \cite{DingDuGuo2021:StefanProblemFisher-unbounded}).
\end{remark}

Once \Cref{thm:total-monotonicity-formula} is established, we can return to the energy conservation law.
Specifically, the results of Ushakov do not apply when $\mu(x,t)\not\equiv 0$ in $\{u=0\}$, and therefore they do not extend to the two-phase problem.
Moreover, without the $L^1(D)$-continuity in time of $u$ and $\mu$, one lacks the regularity needed to define the integral of $(u+\mu)$ for every $t \ge 0$, and the conservation law can only be established for almost every $t\ge0$.

In order to generalize this classical result, we first define the signed enthalpy $\mathcal H_0(t)$ as
\begin{equation}\label{e:energy-two-phase}
\mathcal H_0(t) = \mathcal H_0(u,\mu,h;t) := \int_{\mathbb{R}^d} \big( u(x,t) + (\mu(x,t) - h(x)) \big), \mathrm{d}x,
\end{equation}
where $h(x) = \mu(x,0)$. The energy conservation law then reads as follows.

\begin{theorem}[Energy Conservation Law]\label{thm:energy-conservation-law}
    Let $(u,\mu)$ be an enthalpy solution of the Stefan problem in $\mathbb{R}^d \times [0,+\infty)$ with initial temperature $g \in H^1(\mathbb{R}^d)$ and initial mushy coefficient $h \in L^\infty(\mathbb{R}^d)$. If $g \in L^1(\mathbb{R}^d)$, then $\mathcal H_0(t)$ is well defined for all $t \ge 0$ and is constant in time, that is,
    \[
        \mathcal H_0(t) = \int_{\R^d} g(x) \dif x \qquad \text{for all} \quad t\ge0.
    \]
\end{theorem}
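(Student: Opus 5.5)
We plan to test the weak formulation \eqref{e:weak_solution-1} with a family of cutoffs and then pass to the limit using the integrability supplied by \Cref{thm:total-monotonicity-formula} (with $D=\R^d$). We work with the enthalpy $u+\mu-h$ rather than $u+\mu$. Since $h$ is time independent, subtracting $\int h\,\eta\dif x$ at both times leaves \eqref{e:weak_solution-1} unchanged when $\eta$ is time independent. So for $\eta(x)=\phi_R(x):=\phi(x/R)$, with $\phi\in C^\infty_c(B_2)$, $0\le\phi\le1$, $\phi\equiv1$ on $B_1$, and any $0\le t_1\le t_2<\infty$, we get
\begin{equation*}
\int_{\R^d}\big(u+\mu-h\big)\phi_R\dif x\Big|_{t=t_1}^{t_2}=-\int_{t_1}^{t_2}\int_{\R^d}\nabla u\cdot\nabla\phi_R\dif x\dif t .
\end{equation*}
This $\eta$ is not compactly supported in time, but we can multiply it by a time cutoff equal to $1$ on $[t_1,t_2]$ without changing anything, since only times in $[t_1,t_2]$ enter.

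\textbf{Step 1 (well-posedness of $\mathcal H_0$).} By \Cref{thm:total-monotonicity-formula} applied with $D=\R^d$ and $g\in H^1(\R^d)\cap L^1(\R^d)$, we have $u+\mu-h\in L^1(\R^d)$ for every $t$, with $\|u(\cdot,t)+\mu(\cdot,t)-h\|_{L^1}\le\|g\|_{L^1}$. Hence $\mathcal H_0(t)$ is finite. Dominated convergence then gives that the left-hand side converges to $\mathcal H_0(t_2)-\mathcal H_0(t_1)$ as $R\to\infty$. We also note that $\mathcal H_0(0)=\int g$.

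\textbf{Step 2 (vanishing of the flux term; main obstacle).} We need
\begin{equation*}
\int_{t_1}^{t_2}\int \nabla u\cdot\nabla\phi_R\to0 \qquad\text{as } R\to\infty.
\end{equation*}
A naive Cauchy--Schwarz bound gives $\|\nabla u\|_{L^2}\|\nabla\phi_R\|_{L^2}\sim R^{(d-2)/2}$, which fails for $d\ge3$. Instead we integrate by parts in space:
\begin{equation*}
\int_{t_1}^{t_2}\int \nabla u\cdot\nabla\phi_R=-\int_{t_1}^{t_2}\int u\,\Delta\phi_R,
\end{equation*}
which is valid because $u(\cdot,t)\in H^1$ for a.e.\ $t$ (the energy bounds in \Cref{def:enthalpy_solution}). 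Since $\|\Delta\phi_R\|_\infty\le CR^{-2}$, we obtain
\begin{equation*}
\Big|\int_{t_1}^{t_2}\int u\,\Delta\phi_R\Big|\le CR^{-2}\int_{t_1}^{t_2}\|u(\cdot,t)\|_{L^1}\dif t\le CR^{-2}(t_2-t_1)\|g\|_{L^1},
\end{equation*}
where we used $\|u(\cdot,t)\|_{L^1}\le\mathcal H(t)\le\|g\|_{L^1}$. This follows from the decomposition $\mathcal H=\|u\|_{L^1}+\|\mu-h\|_{L^1}$ explained after \Cref{thm:total-monotonicity-formula}, together with its monotonicity. The bound tends to $0$, so $\mathcal H_0(t_2)=\mathcal H_0(t_1)$.

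\textbf{Step 3 (conclusion at every time).} Take $t_1=0$. The identity holds for every $t_2$, because \eqref{e:weak_solution-1} is stated for all times and $t\mapsto u,\ \mu-h$ is $L^1$-continuous by \Cref{thm:total-monotonicity-formula}. Hence $\mathcal H_0(t)=\int g$ for all $t\ge0$. The only delicate point is justifying the spatial integration by parts and the validity of the weak identity at fixed times. If \Cref{def:enthalpy_solution} yields \eqref{e:weak_solution-1} only for a.e.\ $t_1,t_2$, we first obtain the identity for a.e.\ $t$ and then extend it to every $t$ using the $L^1$ continuity of $u$ and of $\mu-h$ from \Cref{thm:total-monotonicity-formula}.
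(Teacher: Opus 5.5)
Your proposal is correct and follows essentially the paper's argument. Like the proof of \Cref{lemma:energy-conservation} that the paper reuses, you test with a radial cutoff, move the flux onto $u\,\Delta\rho_R$ and kill it with the $L^1$ bound on $u$, then pass to the limit on the left by dominated convergence using the finiteness of $\mathcal H(t)$ from \Cref{thm:total-monotonicity-formula}. The only cosmetic difference is that you write the explicit bound $CR^{-2}(t_2-t_1)\|g\|_{L^1}$, where the paper applies dominated convergence using $u\in L^1(\R^d\times(0,T))$.
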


\Cref{thm:energy-conservation-law} is essentially a corollary of \Cref{thm:total-monotonicity-formula}, and follows by testing \eqref{e:weak_solution} with a suitable family of nonnegative functions $\eta_R \nearrow 1$ (see \Cref{subsec:mono:monotonicity}).

\subsection{Long time behavior of the free boundary of the Stefan problem}\label{sub:intro:stability}
The evolution of the free boundary of solutions of the Stefan problem have been widely investigated since the 1980s.
These studies mainly deal with two configurations, which are the one-phase case problem with persistent energy injection \cite{Matano1983:AsymptoticBehaviorStefan,QuirosVazquez2001:AsymptoticConvergenceStefan,Kim_2006:Longtime,DuMatanoWang2014:RegularityAsymptoticBehaviorStefan} or the case where the initial data is a small perturbation of a stable configuration \cite{Meirmanov_1992,AthanasopoulosCaffarelliSalsa1998:FlatFreeBoundariesAreSmooth,Hadzic_Shkoller_2015:sphereStefan}.
In the first case, it is known that solutions with compact initial data and energy injection (given by a heat source or by prescribing lateral boundary data in exterior domains) becomes smooth for large times and converge to certain radial configurations. 
We also refer to \cite{QuirosVazquez2001:AsymptoticConvergenceStefan,Kim_2006:Longtime} for the analogous result in the framework of the one-phase Hele-Shaw problem.
In \cite{Meirmanov_1992,AthanasopoulosCaffarelliSalsa1998:FlatFreeBoundariesAreSmooth} the authors prove that two-phase solutions that are close to affine configurations share the same behavior, while in \cite{Hadzic_Shkoller_2015:sphereStefan} the authors prove that one-phase radial configurations are \emph{stable} under small and smooth perturbations. All these results are stability theorems for solutions which are close to some prescribed configuration. We notice that \Cref{thm:finite-melting,thm:finite-melting-one-phase}, on the other hand, are stability results for general initial data. 

The key ingredients and the main difficulties in the proof of \Cref{thm:finite-melting,thm:finite-melting-one-phase} are the following. 

\begin{itemize}
\item {\it $L^1$ stability.} In order to prove \Cref{thm:finite-melting,thm:finite-melting-one-phase} we need quantitative estimates for the free boundary movement from the monotonicity of the total enthalpy. We first notice that the enthalpy monotonicity (\Cref{thm:total-monotonicity-formula}), together with the energy condition \eqref{e:intro-phase-separation} on $h$, implies the following measure estimate (we refer to \Cref{lemma:measure-est}):
\begin{equation}\label{e:measure-estimate-intro}
    \big|\Omega_u^+(t) \setminus \Omega_g^+\big| 
        + \big|\Omega_u^-(t) \setminus \Omega_g^-\big|
        \le \frac{1}{1-\ell} \| g\|_{L^1(D)}.
\end{equation}
The latter concludes the first part of the proof as it provides an $L^1$ bound on the phases' growth, while \Cref{thm:finite-melting,thm:finite-melting-one-phase} deal with the distance traveled by the free boundary, which corresponds to an $L^\infty$ bound.\medskip

\item {\it Lack of uniform density estimates.} 
In geometric and free boundary problems, the passage from $L^1$ to $L^\infty$ bounds is usually obtained via density estimates or non-degeneracy properties at free boundary points; however, it is well-known that, in general, there are not such estimates for the Stefan problem: 
\cite{KingLaceyVasquez1995:AnglesHeleShaw,King95:WaitingTimes_HeleShaw_Stefan,Athanasopoulos_Caffarelli_Salsa_1996b,CaffarelliSalsa:GeomApproachToFreeBoundary,ChoiKim2006:WaitingTime}.
We solve this issue in \Cref{lemma:nofiltration}, where we prove such estimates at the fastest moving free boundary points. This suffices, as they act as natural barriers from above (and below) controlling the spreading of the phases of the solution.\medskip

\item {\it Density estimates at the fastest moving points.} In \Cref{lemma:nofiltration} we show that whenever a point $y_0 \in \Gamma_g^+$ is the first to cover a distance $r_0>0$ away from $\Gamma_g^+$ (and $\Gamma_g^+$ is \emph{flat} in $B_{2r_0}(y_0)$) reaching $x_0\in \Gamma^+(t_0)$ at time $t_0>0$, then in $B_{{r_0}}(x_0)$ the movement of the whole free boundary in that direction is comparable.
More precisely, we show that there exists a strictly increasing function $c_d = c_d(d,\|g\|_{L^1(D)}) : (0,+\infty) \to (0,+\infty)$ such that the following density estimate holds
\begin{equation}
    \label{e:density-small-intro}
    \big| \Omega_u^+(t_0) \cap B_{r_0}(x_0) \big| \ge (1-\ell)^{\sfrac d2} c_d(r_0).
\end{equation}
Furthermore, as the proof of \Cref{lemma:nofiltration} heavily relies on the parabolic boundary Harnack inequality, which degenerates as $r_0\to0$ (and thus $t_0\to0$) or as $\|g\|_{L^1}\to+\infty$\footnote{The boundary Harnack inequality actually involved the $L^\infty$ norm of the temperature $u$. However, thanks to the finite-speed of propagation property (\Cref{prop:finite-speed-inner}), controlling the $L^1$ norm of $g$ or the $L^\infty$ norm of $u$ is equivalent.}, if there exists a dimensional constant $C_d>0$ (for instance $C_d=100d$) such that $r_0> 1/C_d$ and $\|g\|_{L^1}\le C_d$, then we can improve \eqref{e:density-small-intro} showing that there exists a constant $c_d = c_d(d)>0$ which depends only on the dimension such that
\begin{equation}
    \label{e:density-big-intro}
    \frac{\big| \Omega_u^+(t_0) \cap B_{r_0}(x_0) \big|}{\big|B_{r_0} \big|} \ge (1-\ell)^{\sfrac d2}c_d .
\end{equation}
Our approach is inspired by the free boundary regularity theory developed in \cite{Athanasopoulos_Caffarelli_Salsa_1996a,Athanasopoulos_Caffarelli_Salsa_1996b,AthanasopoulosCaffarelliSalsa1998:FlatFreeBoundariesAreSmooth,DeSilvaForcilloSavin}, and the statement of the \Cref{lemma:nofiltration} can be also viewed as an Harnack inequality for the free boundary at such points.
\end{itemize}

\begin{remark}[about the density estimate \eqref{e:density-big-intro}]
We stress that for \eqref{e:density-big-intro} we do not assume any a priori non-degeneracy property for the solution; instead, we construct two regular barriers (one from above and one from below) that detect the speed of $\Gamma^+_u(t)$ at each time, thereby capturing any spatial or temporal non-homogeneity of the free boundary speed.
We also notice that our approach do not rely on reflection principles or on the comparison with some radial profile; this allows to treat initial data with arbitrary free boundary. 
\end{remark}

We conclude this introduction by pointing out the differences between \Cref{thm:finite-melting,thm:finite-melting-one-phase}.
Both the theorems deals with the expansion of the phases and provide information about how much each phase could grow.
In the one-phase problem, as the positive phase $\Omega_u^+(t)$ is monotonically increasing in time (\Cref{prop:one-phase:positivity-increasing}), it is sufficient to provide an exhaustive description of the free boundary over time, that leads to the modulus of continuity $R$ of \Cref{thm:finite-melting-one-phase}.

\begin{center}
    \begin{figure}[ht]
	\begin{tikzpicture}[use Hobby shortcut,scale=0.9]
		\begin{scope}[shift={(-3,0)},scale=0.7]
			\clip (0,0) circle (4);
			\draw[thick, fill = MidnightBlue!10,closed,shift={(-1,-0.5)},scale=1.5] (0,2) .. (-1.5,0.5) .. (-0.5,-0.5) .. (1,-1.5) .. (2,2);

			\draw[thick, fill = MidnightBlue!20,closed,shift={(-1,-0.5)}] (0,2) .. (-0.5,0) .. (0,0) .. (1,-1) .. (2,2);

			\draw[dashed, thick] (0,0) circle (3.99);

						\node at (0.5,0) {$\Omega_u^+(0)$};
						\node at (0.8,2.1) {$\Omega_u^+(t)$};

		\end{scope}

        \begin{scope}[shift={(4,0)},scale=0.7]
			\clip (0,0) circle (4);
            
			\draw[thick, fill = MidnightBlue!10] 
                (-2,0) -- (0,0) -- (2,0) .. (2.1,0.01) .. (3,1) .. (0,3.5) .. (-3,1) .. (-2.1,0.01) .. (-2,0);
			\draw[thick, fill = MidnightBlue!20,closed] 
                (-1.5,1) .. (0,0).. (1.5,1) .. (0,2.5);
                
			\draw[thick, fill = black!7,yscale =-1]     
                (-2,0) -- (0,0) -- (2,0) .. (2.1,0.01) .. (3,1) .. (0,3.5) .. (-3,1) .. (-2.1,0.01) .. (-2,0);
			\draw[thick, fill = black!14,closed,yscale=-1]     
                (-1.5,1) .. (0,0).. (1.5,1) .. (0,2.5);

			\draw[dashed, thick] (0,0) circle (3.99);

						\node at (0,-1) {$\Omega_u^-(0)$};
			\node at (0,1) {$\Omega_u^+(0)$};
			\node at (0,3) {$\Omega_u^+(t)$};
			\node at (0,-3) {$\Omega_u^-(t)$};

		\end{scope}

    \end{tikzpicture}
         \caption{An example of one-phase solution with liquid phase asymptotically bounded on the left, and an example of two phase solution with persistent mushy region and branching points on the right.}
        \label{fig:asymptotic-mushy-intro}
    \end{figure}
\end{center}

\subsection{Plan of the paper}
In \Cref{sec:one-phase} we introduce some preliminary definitions and results, with a focus on the one-phase solutions. 
\Cref{sec:monotonicity-formula} we prove \Cref{thm:total-monotonicity-formula} as well as \Cref{thm:energy-conservation-law}. 
In \Cref{sec:asymptotic-analysis} we prove \Cref{thm:finite-melting,thm:finite-melting-one-phase}. 
We conclude the paper in \Cref{sec:examples}, where we provide some interesting example of solutions with bounded dynamics and persistent branching points.

\addtocontents{toc}{\protect\setcounter{tocdepth}{0}}
\section*{Notation}
We fix some notation that will come handy during the whole paper. Through the whole paper we will work with sets $D\subset\R^d$ and $E \subset \R^d\times[0,+\infty)$, where $d\ge2$ is fixed along the paper.\medskip

\paragraph{Space-time sets}
    We use $E\subset \R^d \times[0,+\infty)$ to indicate a space-time set, then we give the following definitions.
    \begin{itemize}
        \item $E(t) := E \cap \{ x\in \R^d : (x,t) \in E\}$ is the slice of $E$ at a fixed time.
        \item For $0\le t_1\le t_2$ we say that $E$ is \emph{cylindrical} if $E = D \times (t_1,t_2]$ for some $D\subset \R^d$, and we define the following boundary set:
        \begin{itemize}
            \item the \emph{lateral boundary of $E$} as $\de_L E := \de D \times(t_1,t_2)$;
            \item the \emph{parabolic boundary of $E$} as $\de_P E := \de_L E \cap E(t_1)$.
        \end{itemize}
    \end{itemize}\medskip

\paragraph{Spatial sets} For $D\subset\R^d$ we define the following sets:
    \begin{itemize}
        \item for all $T>0$, $D_T := D \times [0,T)$ (with $D_\infty := D \times[0,+\infty)$); we will continue to use the notation $D\times[0,T)$ whenever we value it more suitable in the context;
        \item if $D = \R^d$, we use the notation $\R^{d+1}_+ := \R^d\times[0,+\infty)$;
        \item for all $t\ge0$, we write $D(t) := D \times\{t\}$.
        \item for all open set $D \subset \R^d$ and $T>0$ we define $H^1(D_T)$ and $H^1(D)$ as the Sobolev spaces in $\R^{d+1}$ and $\R^d$ (respectively). We also define $H^1_{0,P}(D_T)$ and $H^1_{0,L}(D_T)$ respectively as the closure of $C^{\infty}_c(D\times(0,+\infty))$ and $C^\infty(D\times\R)$.
        Having homogeneous Dirichlet boundary datum means $u \in H^1_{0,L}(D_T)$. 
    \end{itemize}

\paragraph{Cubes} Given $(x_0,t_0)\in \R^d\times(0,+\infty)$ and $r>0$, we define the following objects:
    \begin{itemize}
        \item $B_r(x_0) \subset \R^d$ is the ball of radius $r>0$ centered in $x_0$;
        \item $C_r(x_0) := B_r(x_0) \times[0,+\infty) \subset \R^d\times[0,+\infty)$ is the cylinder over $B_r(x_0)$;
        \item if $r<2\sqrt{t_0}$, then $Q_r(x_0,t_0) \subset \R^d \times(0,+\infty)$ is the parabolic cube centered in $(x_0,t_0)$ of size $r$ defined as
            \[
                Q_r(x_0,t_0) := \left\{(x,t) \in \R^d\times(0,+\infty) : |x-x_0|<r \quad \text{and}\quad t_0-\frac{r^2}{2} < t \le t_0 \right\}.
            \]
    \end{itemize}
If $x_0=0$, $t_0=1$, and $r =1$ we will write $B = B_r(0)$, $Q = Q_1(0,1)$.

\paragraph{Temperature}
    We will call \emph{temperature} $u$ a function $u : D \times[0,+\infty) \to \R$, and we write $u = u^+ + u^-$ dividing its positive and negative parts. Thus we define
    \begin{itemize}
        \item its positive (and negative) phases as
        \[
            \Omega^\pm_u := \big\{(x,t) \in D\times[0,+\infty) : \pm u(x,t) >0 \big\};
        \] 
        \item $\Gamma^\pm_u := \de \Omega^{\pm}_u$ its positive and negative interfaces;
        \item $\Gamma_u = (\de\Omega^+_u \cup \de\Omega^-_u ) \cap D= (\Gamma^+_u \cup \Gamma^-_u )\cap D$, the \emph{free boundary};
        \item $\Gamma_{op}^\pm := \Gamma^\pm \setminus \Gamma^\mp$ is the \emph{one-phase part} of the free boundary, $\Gamma_{tp} := \Gamma^+ \cap \Gamma^-$ is the \emph{two-phase one}.
    \end{itemize}\medskip

\paragraph{Integral and differential operators} 
    For any $u : D \times[0,+\infty)\to\R$, $D\subset\R^d$, and $E \subset D \times[0,+\infty)$ we adopt the following notation:
    \begin{itemize}
        \item with $|D|$ and $|E|$, we indicate both the $d$ and $(d+1)$ dimensional Lebesgue measure, while we use $\Hc^{d-1}(D)$ for the $(d-1)$-Hausdorff measure of $D\subset\R^d$;
        \item we use all the following convention for the integrals at fixed time, depending on the circumstances
        \[
            \int_{E(t)} u \dif x = \int_{E}u(x,t) \dif x = \int_{E} u \dif x \bigg|_{\tau=t};
        \]
        \item we omit the $D\times[0,+\infty)$ term in the integrals
        \[
            \iint u \dif x \dif t := \iint_{D\times[0,+\infty)} u (x,t) \dif x \dif t;
        \]
        \item we write $u \in C^{\alpha}_x(D)$ for $\alpha \in(0,1)$ if $u$ is $\alpha$-H\"older in space, and $u\in C^{0,1}(D)$ if $u$ is Lipschitz. 
        We write $u \in C^{1,\alpha}_x( D)$ if $u \in C^1_x(D)$, and $\nabla u \in C^\alpha_{x}(D;\R^d)$, with $\alpha\in(0,1]$;
        \item a set $D\subset\R^d$ has locally Lipschitz boundary if $\de D$ locally described by the graph of a (space) Lipschitz function;
        $\de D$ is locally $C^{1,\alpha}_x$, for $\alpha\in(0,1)$, if it is locally the graph of a $C^{1,\alpha}_x(D)$ function;
        \item if $E$ is open, $k,\ell \in \NN$, then we write $u\in C^{k,\ell}_{x,t}(E)$ if $u$ is $k$ times differentiable in $D\times(0,+\infty)$ in the space variables, $\ell$ times in the time one and all these derivatives are continuous. We write $u\in C^{k,\ell}_{x,t}\left(\overline E\right)$ if the derivatives are continuous up to the boundary;
        \item we adopt the following conventions
            \[
                D u = D_{x,t} u, \qquad \nabla u = \nabla_x u,\qquad \text{and}\qquad \Delta u = \Delta_x u.
            \]
    \end{itemize}

\addtocontents{toc}{\protect\setcounter{tocdepth}{1}}

\section{Preliminary results for one-phase solutions}\label{sec:one-phase}
The results contained in this section are a prerequisite for the analysis that follows.
Specifically, in \Cref{sec:enthalpy-formulation} we rigorously introduce the enthalpy formulation and its comparison principle, while in \Cref{sec:classical-formulation} we present the classical formulation (we refer to \cite{Paiano_Velichkov2026:ellipticregularization} for a complete introduction). In the subsequent sections we focus on the properties of one-phase solutions.
Namely: in \Cref{sec:l1-decay} we present some relations with caloric functions while in \Cref{sec:one-phase-subsol} we exploit some monotonicity properties of one-phase solutions.
We do not claim the results of this section to be completely original; however, we give the proofs of a series of results that we cannot find in the literature within the same generality we need for the following discussion.

\subsection{Enthalpy formulation}\label{sec:enthalpy-formulation}
The weak formulation of the Stefan problem (or equivalently the enthalpy formulation) for Dirichlet boundary data is given by an integral identity for the enthalpy of the system, or more precisely, for the pair $(u,\mu)$. We refer to \cite{Paiano_Velichkov2026:ellipticregularization} for further discussion on the topic and for the proofs of the results presented in this section.

\begin{definition}[Enthalpy solution]\label{def:enthalpy_solution}
	Let $D\subset\R^d$ be an open set and $T \in (0,+\infty]$.
	A couple $(u,\mu)$ of Lebesgue measurable functions $u:D_T\to\R$ and $\mu:D_T\to\R$ is an \emph{enthalpy solution} (or equivalently \emph{weak solution}) of the Stefan problem in $D_T$ if the following hold:
	\begin{enumerate}[{\itshape (i)}]
		\item\label{item:as:regularity} \textit{Regularity of the temperature in time.}
		$u \in L^2_{loc}([0,T);H^1(D))$ and $\de_t u \in L^2_{loc}([0,T);L^2(D))$.
		\item\label{item:as:continuity} \textit{Continuity of the mushy coefficient in time.} The function $\mu:[0,T)\to L^\infty(D)$ is continuous with respect to the weak-$\ast$ topology in $L^\infty(D)$, that is:
		\[
		\int_{D} \mu(x,t) \, \eta(x) \dif x = \lim_{\tau \to t} \int_{D}\mu(x,\tau) \, \eta(x) \dif x,
		\]
		for all $t\in[0,T)$ and all $\eta\in L^1(D)$.
		\item\label{item:as:compatibility}\textit{Compatibility.} For all $t\in[0,T)$, there exists $N_t \subset D$ such that $|D\setminus N_t|=0$ and
		\begin{equation}\label{e:compatibility_enthalpy-phases}\tag{C-ST}
			\mu(x,t) = 
			\begin{cases}
				\begin{array}{rl}
					1 & \text{if}\quad u(x,t)>0,\\
					-1 & \text{if} \quad u(x,t)<0,
				\end{array}
			\end{cases} \qquad\text{for every}\quad x\in N_t.
		\end{equation}
		\item \label{item:as:weakformulation} \textit{Weak formulation.}
		The following integral identity holds
		\begin{equation}\label{e:weak_solution}\tag{H-ST}
			\int_{D(t)}(u+\mu)\eta \dif x \bigg|_{t=t_1}^{t_2} 
			= \int_{t_1}^{t_2}\int_D (u+\mu) \de_t \eta 
			-\nabla u \cdot \nabla \eta \dif x \dif t,
		\end{equation}
		for all $0\le t_1\le t_2 < T$ and all $\eta \in C^\infty_c(D\times\R)$.
	\end{enumerate}
\end{definition}

\begin{definition}[Entire solutions]
    Let $(u,\mu)$ as in \Cref{def:enthalpy_solution}, we say that $(u,\mu)$ is an \emph{entire solution} if $D = \R^d$ and $T= +\infty$, and thus $(u,\mu)$ is defined in $\R^{d+1}_+$.
\end{definition}

The same approach gives the notion of subsolution and supersolution.

\begin{definition}[Dirichlet enthalpy sub/supersolutions]\label{def:weak-sub-super}
	Let $D \subset \R^d$ and $T \in (0,+\infty]$. 
	Let also $u$ and $\mu$ be measurable functions in $D\times(0,+\infty)$.
	We say that a couple $(u,\mu)$ is an \emph{enthalpy subsolution} (resp. \emph{supersolution}) \emph{with  Dirichlet boundary conditions} if it satisfies the conditions \ref{item:as:regularity}-\ref{item:as:compatibility} of \Cref{def:enthalpy_solution} and if the following integral inequality holds
	\begin{equation}\label{e:subsolution-enthalpy}
		\int_{D(t)} (u+\mu) \eta \dif x\biggl|_{t=t_1}^{t_2} \le \int_{t_1}^{t_2}\int_{D} (u+\mu) \de_t\eta - \nabla u \cdot \nabla \eta \dif x \dif t\,,\qquad (\text{resp.}\; \ge),
	\end{equation}
	for all $0\le t_1\le t_2< T$ and all nonnegative  test functions $\eta\ge 0$,  where $\eta \in C^\infty_c(D\times \R)$.
\end{definition}

One of the main properties of enthalpy solutions is the comparison principle, that (among other properties) implies the uniqueness of solutions.

\begin{theorem}[Comparison principle \cite{Paiano_Velichkov2026:ellipticregularization}]\label{lemma:comparison-stefan}
	Let $D\subset\R^d$ be an open set and $T^\ast\in(0,+\infty]$.
	Let $(u_1,\mu_1)$ and $(u_2,\mu_2)$ be respectively a Dirichlet enthalpy subsolution and a Dirichlet enthalpy supersolution  in $D_{T^\ast}$. Assume that
	\[
	(u_2- u_1)^- \in H^1_{0,P}(D_{T^*}) \quad \text{and} \quad \mu_1(x,0) \le \mu_2(x,0) \quad \text{for almost every} \quad x\in D.
	\]
	Then, 
	for all $T\in(0,T^\ast)$, we have:
	\[
	u_1(x,T)\le u_2(x,T) 
	\quad\text{and}\quad
	\mu_1(x,T) \le \mu_2(x,T)\quad \text{for almost every}\quad x\in D.
	\]
\end{theorem}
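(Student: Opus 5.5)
The plan is the standard doubling-of-variables/Kato-inequality approach for Stefan-type problems, done directly at the level of the enthalpies. Set $w := u_1-u_2$ and $m:=\mu_1-\mu_2$, and subtract the inequality \eqref{e:subsolution-enthalpy} for $(u_2,\mu_2)$ (supersolution, $\ge$) from that for $(u_1,\mu_1)$ (subsolution, $\le$). For every nonnegative $\eta\in C^\infty_c(D\times\R)$ and all $0\le t_1\le t_2<T^\ast$ this gives
\[
\int_{D}(w+m)\eta\dif x\bigg|_{t_1}^{t_2}\le\int_{t_1}^{t_2}\int_D (w+m)\de_t\eta-\nabla w\cdot\nabla\eta\dif x\dif t .
\]
The goal is to show that $t\mapsto\int_D (w+m)^+\dif x$ (localized first, then globalized) is non-increasing and vanishes at $t=0$. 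From this the conclusion follows pointwise: the compatibility condition \eqref{e:compatibility_enthalpy-phases} makes $w$ and $m$ sign-coherent. Indeed, if $u_1>u_2$ then $\mu_1\ge\mu_2$, because either $u_1>0$ and $\mu_1=1$, or $u_2<0$ and $\mu_2=-1$. Hence $(w+m)^+=0$ a.e. forces $w\le0$ and $m\le0$.

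\textbf{Step 1 (Kato inequality).} Use as test function $\eta\,\psi_\delta(w)$, where $\psi_\delta(s)=\min(s^+/\delta,1)$ is a Lipschitz approximation of $\ind_{\{s>0\}}$. This is admissible after time mollification (Steklov averages), thanks to $u_i\in L^2H^1$ and $\de_tu_i\in L^2L^2$ in \Cref{def:enthalpy_solution}\ref{item:as:regularity}. The gradient term gives
\[
-\iint \psi_\delta'(w)|\nabla w|^2\eta\le0 \qquad\text{plus}\qquad -\iint\psi_\delta(w)\nabla w\cdot\nabla\eta .
\]
In the time term, the sign coherence above gives $\psi_\delta(w)\,m\to m^+\ind_{\{w>0\}}$. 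On $\{w=0\}$, however, $m$ can have either sign, so one has to show that the contribution of $(m)^+$ on $\{w=0\}$ also cannot grow. For this, use instead the test function $\eta\,\mathrm{sign}^+_\delta(w+m)$. Since $m$ has no time regularity, the derivative must be handled by a regularization in time. The practical route is to work with the enthalpies $H_i=u_i+\mu_i$, recall that $u_i=\beta(H_i)$ with $\beta$ monotone, and apply the classical Kružkov/Otto $L^1$-contraction argument for $\de_tH=\Delta\beta(H)$. With a doubling of the time variable only, one obtains in the limit
\[
\frac{\dif}{\dif t}\int_D (H_1-H_2)^+\eta\le\int_D \big(\beta(H_1)-\beta(H_2)\big)^+\Delta\eta
\]
in the sense of distributions. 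Here $\big(\beta(H_1)-\beta(H_2)\big)^+=w^+$ by monotonicity, and the weak-$\ast$ continuity in \Cref{def:enthalpy_solution}\ref{item:as:continuity} identifies the values at each fixed time.

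\textbf{Step 2 (removing the cutoff).} Pass from the localized inequality to the global one using $(w)^-$ of the reversed pair, i.e. $(u_2-u_1)^-\in H^1_{0,P}(D_{T^\ast})$. This says that $w^+$ vanishes on the lateral boundary, so the test function $\eta$ can be taken constant in space near $\de D$ without boundary terms. Concretely, keep the form $-\iint \ind_{\{w>0\}}\nabla w\cdot\nabla\eta=-\iint\nabla w^+\cdot\nabla\eta$, approximate $w^+$ in $H^1_0$, and for unbounded $D$ take $\eta_R\nearrow1$ with $|\nabla\eta_R|\le 1/R$. The error is then bounded by $R^{-1}\|\nabla w^+\|_{L^1(B_{2R}\setminus B_R)}$ on finite time intervals; by Cauchy–Schwarz this is $\lesssim R^{d/2-1}\|\nabla w^+\|_{L^2(\{R<|x|<2R\})}$. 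For $d\ge3$ this bound does not obviously vanish, so the Laplacian form $\int w^+\Delta\eta_R$ is used instead, together with an $L^1$ bound on $w^+$.

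\textbf{Step 3 (initial data).} At $t=0$, $(H_1-H_2)^+=(w+m)^+=0$ a.e., since $w(\cdot,0)\le0$ (from the $H^1_{0,P}$ condition) and $m(\cdot,0)\le0$. Therefore $\int_D (H_1-H_2)^+(T)\,\eta\dif x\le0$ for every $T<T^\ast$, which gives the claim by sign coherence.

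The main obstacle is the rigorous justification of Step 1. Since $\mu_i$ is only weakly-$\ast$ continuous in time, one cannot chain-rule $\de_t(\cdot)^+$ on $H_1-H_2$. The plan is to use Steklov averages in time, together with the monotonicity inequality $(\beta(a)-\beta(b))\,\mathrm{sign}^+(a-b)\ge0$, to pass to the limit. A secondary difficulty is the integrability at infinity when $D$ is unbounded; this is why the $L^1$ control in Step 2 is needed.
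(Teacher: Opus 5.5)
The gap is in Step 1, which carries the whole content of the theorem. The route you sketch does not produce a Kato inequality for $(H_1-H_2)^+$.

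\textbf{Admissible test functions cannot see $m$.} Because $\mu_1,\mu_2$ are merely $L^\infty$ in $x$ and in $t$, every admissible nonnegative test function in \eqref{e:subsolution-enthalpy} is built from $(x,t,u_1,u_2)$.
\begin{itemize}
\item The function $\eta\,\mathrm{sign}^+_\delta(w+m)$ is excluded already because $\nabla m$ does not exist. Steklov averaging in time cannot rescue it.
\item Test functions of $w$ vanish on $\{w\le 0\}\supset\{u_1=u_2=0\}$, where $m$ is a priori arbitrary. So they control at best $(H_1-H_2)\ind_{\{w>0\}}$.
\item Even on $\{w>0\}$, the term $\delta^{-1}\iint_{\{0<w<\delta\}} m\,\eta\,\de_t w$ coming from $\de_t\psi_\delta(w)$ is not a time derivative, since $m$ is not a function of $w$.
\item The monotonicity inequality for $\beta$ is empty on $\{\beta(H_1)=\beta(H_2)\}$. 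The part of that set where $H_1\neq H_2$ is exactly $\{u_1=u_2=0,\ \mu_1\neq\mu_2\}$.
\end{itemize}

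\textbf{Doubling the time variable does not help.} Otto's argument relies on $b$ being single-valued. Then the chain rule in $t$ (with $k=u_2(x,s)$) and in $s$ (with $k=u_1(x,t)$) give the same primitive $(b(u_1(t))-b(u_2(s)))^+$, so the $\rho_\varepsilon'(t-s)$ terms cancel. For the Stefan graph $\gamma(k)=k+\mathrm{Sgn}(k)$, with $\mathrm{Sgn}(0)=[-1,1]$, the two primitives differ:
\begin{itemize}
\item in $t$: $(H_1(t)-\gamma^{\max}(u_2(s)))\ind_{\{u_1(t)>u_2(s)\}}$;
\item in $s$: $(\gamma^{\min}(u_1(t))-H_2(s))\ind_{\{u_1(t)>u_2(s)\}}$.
\end{itemize}
With $\xi=\eta(x,\tfrac{t+s}{2})\rho_\varepsilon(t-s)$, what survives is
\[
\iiint \eta\,\rho_\varepsilon'(t-s)\Big[\big(1+\mu_1(x,t)\big)\ind_{\{u_1(x,t)=0>u_2(x,s)\}}+\big(\mu_2(x,s)-1\big)\ind_{\{u_2(x,s)=0<u_1(x,t)\}}\Big]\dif x\dif t\dif s .
\]
This term is of order one, because it concentrates where a phase boundary sweeps through $x$. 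It has no sign.

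\textbf{What is missing.} You need an argument that handles the jump of the graph, i.e.\ shows that $(\mu_1-\mu_2)^+$ cannot be created on the common zero set. Two options:
\begin{itemize}
\item An Oleinik-type duality argument on $H_1-H_2$ itself: write $w=a\,(H_1-H_2)$ with $0\le a\le 1$, and test with nonnegative solutions of regularized backward problems $\de_t\eta+a_\varepsilon\Delta\eta=-\chi$.
\item Prove the semi-Kru\v{z}kov inequalities for constants $k\in(-1,1)$ separately, then double variables in the style of Carrillo.
\end{itemize}

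\textbf{Secondary points.}
\begin{itemize}
\item In Step 2 the hypotheses give only $w^+\in L^2(D\times(0,T))$, with no $L^1$ bound. So for $d\ge 5$ the error $R^{-2}\iint_{\{R<|x|<2R\}}w^+$ need not vanish. Use instead the weight $\phi=e^{-\sqrt{1+|x|^2}}$, which satisfies $|\Delta\phi|\le (d+1)\phi$, together with your bound $w^+\le (H_1-H_2)^+$ and Gronwall.
\item Your sign-coherence step uses $|\mu_i|\le 1$; this should be stated explicitly.
\item Passing from a.e.\ $t$ to every $T$ requires the weak lower semicontinuity of $f\mapsto\int f^+\eta$, not continuity.
\end{itemize}
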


A special class of solutions are the so called one-phase solutions.

\begin{definition}[One-phase enthalpy solutions]
    Let $D \subset\R^d$ be an open set and $T\in(0,+\infty]$. 
    The pair $(u,\mu)$ is a one-phase (nonnegative) solution of the Stefan problem in $D_T$ if it is an enthalpy solution in $D_T$ and if for all $t\in[0,T]$, $u(\cdot,t) \ge0$ almost everywhere in $D$.
\end{definition}

\begin{remark}
    The comparison principle (\Cref{lemma:comparison-stefan}) implies that if $u \ge0$ almost everywhere on the parabolic boundary $\de_P D_T$, then $(u,\mu)$ is a one-phase solution.
\end{remark}

Since the sign of the solutions' temperature is preserved, we can specialize the notion of sub/super-solutions to the one-phase problem.

\begin{definition}[one-phase enthalpy sub/super-solutions]
    Let $D \subset \R^d$ and $T \in (0,+\infty]$. The pair $(u,\mu)$ is a one-phase subsolution (resp. supersolution) in $D_T$ if it is a subsolution (resp. supersolution) in the sense of \Cref{def:weak-sub-super} and for all $t\in[0,T]$, $u(x,t)\ge0$ almost everywhere in $D$.
\end{definition}

\begin{remark}
    The theory is “skew"-symmetric with the respect to sign changes. 
    Namely, if $(u,\mu)$ is an enthalpy subsolution/solution/supersolution in $D_T$ and for all $t\in[0,T]$, $u(x,t)\le0$ almost everywhere, then $(-u,-\mu)$ is a one-phase nonnegative supersolution/solution/subsolution.
\end{remark}

\subsection{Classical formulation}\label{sec:classical-formulation}
In this section, we briefly introduce the notions of classical solutions of the Stefan problem, focusing on two-phase solutions with non-empty mushy region or one-phase solutions. We will also introduce the notion of the classical sub/super-solutions in the aforementioned cases.
The latter will play a handful role through all the paper.

Let us fix an open set $D \subset \R^d$, let $u : D \to \R$ be a locally Lipschitz function that represent the temperature, and let us define the space-time free boundaries $\Gamma^+_u$ and $\Gamma^-_u$ as
\[
    \Gamma_u^+ := \de \Omega_u^+ \cap D \times[0,+\infty) 
        \qquad\text{and}\qquad
    \Gamma_u^- := \de \Omega_u^- \cap D \times[0,+\infty).
\]
We assume that $\Gamma^+_u$ and $\Gamma^-_u$ are traversal to $D \times\{t\}$ so that, for all $t\ge0$, we define the sets $\Gamma_u^\pm(t)$ and $\Sigma_u^\pm(t)$ as follows
\[
    \Gamma_u^\pm(t) := \Gamma_u^\pm \cap (D\times\{t\}) 
        \qquad\text{and}\qquad 
    \Sigma_u^\pm(t) := \big\{ x \in D : (x,t) \in \Gamma^\pm_u\big\} \subset D,
\]
are $(d-1)$-dimensional Lipschitz surfaces.
Moreover, we define space-time normal $\widetilde\nu^\pm$ to $\Gamma^\pm_u$ (pointing outwards $\Omega_u^+$ and inwards $\Omega_u^-$) and we write
\[
    \widetilde\nu^\pm(x,t) = \big(\widetilde\nu_x^\pm(x,t),\widetilde\nu_t^\pm(x,t) \big) \in \R^d\times\R.
\]
We call $\nu^\pm$ the space normal of $\Sigma_u^\pm(t)$ (with the same sign convention), since $\Gamma_u^\pm$ are traversal to $D \times\{t\}$, then it satisfies the following identity
\[
    \nu^\pm(x,t) = \frac{\widetilde\nu_x^\pm(x,t)}{|\widetilde\nu_x^\pm(x,t)|}.
\]
Transversality allows us also to characterize the  normal speed $V_\nu^\pm$ of $\Sigma_u^\pm$ as follows
\[
    V_\nu^\pm(x,t) = - \frac{\widetilde\nu_t^\pm(x,t)}{|\widetilde\nu_x^\pm(x,t)|}.
\]
With these definition in mind, we can now state the definition of classical solutions, subsolutions and supersolutions of the Stefan problem.

\begin{definition}[Classical solution (resp. sub/supersolution)]\label{def:classical-sol}
    Let $D\subset\R^d$ be an open set with smooth boundary and $h\in C(D)$ with $|h(x)|<1$ and let $u : D \times[0,+\infty) \to \R$ be a continuous function.
    Let us also assume that the following properties hold:
    \begin{enumerate}[(a)]
     \item\label{item:classical-1} $u:D\times[0,+\infty)\to\R$ is a Lipschitz continuous function in space-time satisfying 
    \begin{equation}\label{e:regularity-assumptions-u-classical-solutions}
        u^+ \in C^{1,1}_{x,t}\big(\overline{\Omega_u^+}\big) \cap C^2_x \big( \Omega_u^+\big) 
            \qquad\text{and}\qquad
        u^- \in C^{1,1}_{x,t}\big(\overline{\Omega_u^-}\big) \cap C^2_x \big( \Omega_u^-\big);
    \end{equation}     
    \item\label{item:classical-2} the surfaces $\Gamma^\pm:=\partial\Omega_u^\pm\cap\big(D\times(0,+\infty)\big)$ are Lipschitz in space-time;
    \item\label{item:classical-3} there are $
    \Gamma_{tp}\subset\Gamma^+\cap\Gamma^-$, $\Gamma^+_{op}\subset\Gamma^+\setminus\Gamma^-$ and $\Gamma^-_{op}\subset\Gamma^-\setminus\Gamma^+$ such that:
    \begin{itemize}
    \item $\Gamma_{tp}$ and $\Gamma_{op}^\pm$ are relatively open subsets of $\Gamma^\pm$,
    \item $\Gamma_{tp}$ and $\Gamma_{op}^\pm$ are $C^1$-regular $d$-dimensional manifolds transversal to each $D\times\{t\}$,
    \item $\Gamma_{tp}$, $\Gamma_{op}^+$, and $\Gamma_{op}^-$ cover almost all the free boundary, that is
    \[
        \mathcal H^{d}\big(\Gamma^+ \setminus (\Gamma_{tp} \cup \Gamma_{op}^+ \big)\big) =0 
        \qquad\text{and}\qquad 
        \mathcal H^{d}\big(\Gamma^- \setminus (\Gamma_{tp} \cup \Gamma_{op}^- \big)\big) =0.
    \]
    \end{itemize}
    \end{enumerate}
    Then, we say that $u$ is a classical solution (resp. sub/supersolution) of the Stefan problem with respect to the initial mushy coefficient $h(x)$ if:
  \begin{equation}\label{e:classical-solution-full}\tag{ST}
	\begin{cases}
		\de_t u^+ = \Delta u^+ & \text{in}\quad \Omega^+_u,\\
		\de_t u^- = \Delta u^- & \text{in}\quad \Omega^-_u,\\
		V_\nu = \frac{1}{1-h(x)}|\nabla u^+| &\text{on}\quad \Gamma_{op}^+,\\
		V_\nu = \frac{1}{h(x)-1} |\nabla u^-| &\text{on}\quad \Gamma_{op}^-,\\
		V_\nu =\frac{1}{2}\left( |\nabla u^+|- |\nabla u^-| \right)&\text{on}\quad \Gamma_{tp}. 
	\end{cases}\qquad\text{(resp. $\le0$ or $\ge0$).}
\end{equation}
\end{definition}

\begin{definition}[Classical one-phase solution (resp. sub/supersolution)]\label{def:classical-sol-op}
    Let $D\subset\R^d$, $h \in C(D)$ be as in \Cref{def:classical-sol}. Let $u : D \times[0,+\infty)\to\R$ be a nonnegative continuous function.
    Let us assume that \ref{item:classical-1}-\ref{item:classical-3} hold and that 
    \begin{equation}\label{e:op-classical-growth}
        \Omega_u^+(t_1) \subset \Omega_u^+(t_2) \qquad\text{for all}\quad t_1 \le t_2.
    \end{equation}
    Then we say that $u$ is classical one-phase solution (resp. sub/supersolution) of the Stefan problem with respect to the initial mushy coefficient $h(x)$ if
    \begin{equation}\label{e:classical-solution-op}\tag{ST-op}
	\begin{cases}
		\de_t u^+ = \Delta u^+ & \text{in}\quad \Omega^+_u,\\
		V_\nu = \frac{1}{1-h(x)}|\nabla u^+| &\text{on}\quad \Gamma_{op}^+.\\
	\end{cases} \qquad\text{(resp. $\le0$ or $\ge0$).}
    \end{equation}
\end{definition}

\begin{proposition}[\cite{Paiano_Velichkov2026:ellipticregularization}]\label{prop:classical-are-weak}
    Let $D\subset\R^d$, $h \in C(D)$, and $u : D \to \R$ be a classical solution (resp. sub/supersolution) of the Stefan problem with respect to the initial mushy coefficient $h$ in the sense \Cref{def:classical-sol} or \Cref{def:classical-sol-op}.
    Let $\mu : D \times[0,+\infty) \to [-1,1]$ be defined as
    \[
        \mu(x,t) = 1 \qquad\text{in}\quad \Omega_u^+,
            \qquad
        \mu(x,t) = -1 \qquad \text{in}\quad \Omega_u^-,
            \qquad\text{and}\qquad
        \mu(x,t) = h(x) \qquad\text{otherwise.}
    \]
    Then $(u,\mu)$ is an enthalpy solution (resp. sub/supersolution) of the Stefan problem in the sense of \Cref{def:enthalpy_solution}.
\end{proposition}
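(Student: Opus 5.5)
The goal is to show that the pair $(u,\mu)$ satisfies the four requirements of \Cref{def:enthalpy_solution}, resp. the inequality \eqref{e:subsolution-enthalpy}. The route is the standard one: integrate by parts separately in $\Omega_u^+$, $\Omega_u^-$ and in the zero set, and check that the boundary terms along the free boundary cancel (or carry a sign) exactly because of the Stefan conditions in \eqref{e:classical-solution-full}.

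\textbf{Conditions \ref{item:as:regularity}--\ref{item:as:compatibility}.}
\begin{itemize}
\item Regularity: $u$ is Lipschitz in space-time and $u^\pm\in C^{1,1}_{x,t}$ up to the free boundary, so $u\in L^2_{loc}H^1$ and $\de_t u\in L^2_{loc}L^2$. On unbounded $D$ one works locally; the test functions have compact support, so only local integrability is used.
\item Compatibility \eqref{e:compatibility_enthalpy-phases} holds by construction, with $N_t$ the complement of the slice $\Sigma^\pm(t)$. That slice has zero $d$-measure because $\Gamma^\pm$ are Lipschitz graphs transversal to $\{t\}$.
\item Weak-$\ast$ continuity of $\mu$: for $\eta\in L^1(D)$,
\[
\int\mu(\cdot,\tau)\eta=\int_{\Omega^+(\tau)}\eta-\int_{\Omega^-(\tau)}\eta+\int_{\{u(\cdot,\tau)=0\}}h\eta .
\]
It suffices that $\ind_{\Omega^\pm_u(\tau)}\to\ind_{\Omega^\pm_u(t)}$ in $L^1_{loc}$. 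This follows from space-time Lipschitz regularity of $\Gamma^\pm$ and transversality, since the symmetric difference lies in a neighbourhood of $\Sigma^\pm(t)$ of width $O(|\tau-t|)$. Then conclude by dominated convergence.
\end{itemize}

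\textbf{Weak formulation.} Fix $\eta\in C^\infty_c(D\times\R)$ and $t_1<t_2$. Split $D\times(t_1,t_2)$ into $E^+=\Omega_u^+$, $E^-=\Omega_u^-$ and $E^0=\{u=0\}$.
\begin{itemize}
\item In $E^0$ one has $u+\mu=h(x)$, which is independent of $t$, and $\nabla u=0$ a.e. By the divergence theorem in space-time for the field $(0,h\eta)$, $\iint_{E^0}h\de_t\eta$ equals the top/bottom slice terms plus $\int_{\de E^0\cap\{t_1<t<t_2\}} h\eta\,\widetilde\nu_t$ (outer normal of $E^0$).
\item In $E^\pm$, use the space-time divergence theorem for $\big((u\pm1)\eta\,,\,\cdot\big)$ combined with $\de_t u=\Delta u$:
\[
\iint_{E^\pm}(u\pm1)\de_t\eta-\nabla u\cdot\nabla\eta
=\Big[\int_{E^\pm(t)}(u\pm1)\eta\Big]_{t_1}^{t_2}
-\int_{\Gamma^\pm}\eta\big((u\pm1)\widetilde\nu_t+\de_{\widetilde\nu_x}u\big)d\Hc^d ,
\]
with signs fixed by the orientation. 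On $\Gamma^\pm$ we have $u=0$.
\item Summing the three pieces, the slice terms reassemble into $\int_{D(t)}(u+\mu)\eta\big|_{t_1}^{t_2}$. What remains is a free-boundary integral, which we compute on each piece, up to the $\Hc^d$-null set left out of $\Gamma_{op}^\pm\cup\Gamma_{tp}$ by \ref{item:classical-3}.
\item On $\Gamma^+_{op}$ the density is $\eta\big((1-h)\widetilde\nu_t+|\widetilde\nu_x||\nabla u^+|\big)$. Here we use $\de_{\nu}u^+=-|\nabla u^+|$, since $\nabla u$ is parallel to $\nu$ on a level set. Using $V_\nu=-\widetilde\nu_t/|\widetilde\nu_x|$, this equals $\eta|\widetilde\nu_x|\big(|\nabla u^+|-(1-h)V_\nu\big)$, which vanishes by the Stefan condition.
\item $\Gamma^-_{op}$ is symmetric, with jump $-1-h$.
\item On $\Gamma_{tp}$ the jump of $\mu$ is $2$ and both fluxes appear, giving $\eta|\widetilde\nu_x|\big(|\nabla u^+|-|\nabla u^-|-2V_\nu\big)=0$.
\item For sub/supersolutions the same quantities have a sign. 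With $\eta\ge0$ this yields the inequality \eqref{e:subsolution-enthalpy}. For one-phase solutions, \eqref{e:op-classical-growth} is consistent with the orientation used.
\end{itemize}

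\textbf{Main obstacle.} The hardest step is justifying the divergence theorem on $E^\pm$ and $E^0$. These sets have only Lipschitz boundaries, may be unbounded, and include the slices at $t_1,t_2$. I would first approximate $\Omega^\pm$ by superlevel sets $\{u>\eps\}$, resp. $\{u<-\eps\}$, which are regular for a.e. $\eps$ by the $C^{1,1}$ regularity and coarea. The boundary flux terms then converge as $\eps\to0$ because $\nabla u^\pm$ is continuous up to $\Gamma^\pm$. The residual set $E^0$ is handled as the complement, using that $h$ is continuous and time-independent.
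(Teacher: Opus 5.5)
The paper does not prove this proposition; it is imported from \cite{Paiano_Velichkov2026:ellipticregularization}, so there is no in-paper argument to compare with. Your scheme is the standard one: Gauss--Green on $\Omega_u^\pm$, the zero set handled as the complement via the field $(0,h\eta)$ (legitimate, since only a time derivative is needed and $h$ is time independent), and cancellation of the free-boundary fluxes by the Stefan laws. Your densities on $\Gamma^+_{op}$ and $\Gamma_{tp}$ are correct.

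The step you pass over as ``symmetric'', however, does not verify the definition as printed.
\begin{itemize}
\item On $\Gamma^-_{op}$ the outer normal of $\Omega_u^-$ is $-\widetilde\nu$, so that phase contributes $\eta\big(\widetilde\nu_t-|\widetilde\nu_x|\,|\nabla u^-|\big)$ and the zero set contributes $h\eta\,\widetilde\nu_t$.
\item The total density is therefore $\eta|\widetilde\nu_x|\big(-(1+h)V_\nu-|\nabla u^-|\big)$. It vanishes if and only if $V_\nu=-|\nabla u^-|/(1+h)$, which is the law obtained from the $\Gamma^+_{op}$ one via $(u,\mu,h)\mapsto(-u,-\mu,-h)$.
\item The law $V_\nu=\frac{1}{h-1}|\nabla u^-|$ written in \eqref{e:classical-solution-full} instead leaves the residual $\eta|\widetilde\nu_x|\frac{2h}{1-h}|\nabla u^-|$.
\end{itemize}
Read literally, the proposition fails wherever $h\neq0$ and $\nabla u^-\neq0$ on $\Gamma^-_{op}$. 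You should say explicitly that you are proving it for the corrected law, and orient the sub/supersolution inequalities on $\Gamma^-_{op}$ accordingly.

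Three further points need repair.

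\textbf{Global integrability.} Condition (i) of \Cref{def:enthalpy_solution} requires $u(\cdot,t)\in H^1(D)$ and $\de_t u(\cdot,t)\in L^2(D)$ globally in space. The compact support of the test functions is irrelevant to this condition. Lipschitz continuity gives it only when $D$ is bounded: $u\equiv1$ on $\R^d$ is a classical solution with $u(\cdot,t)\notin H^1(\R^d)$. For unbounded $D$ an integrability assumption must therefore be added.

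\textbf{The Gauss--Green step.} Your treatment of the main obstacle does not work as written.
\begin{itemize}
\item In the paper's notation $C^{1,1}_{x,t}$ means one continuous derivative in $x$ and one in $t$. Even a Lipschitz gradient would not give Sard's theorem in $d+1$ variables, so nothing makes $\{u=\eps\}$ regular.
\item Continuity of $\nabla u$ up to $\Gamma^\pm$ does not by itself give $\int_{\{u=\eps\}}F\cdot n\to\int_{\Gamma^\pm}F\cdot\widetilde\nu$. Nothing in your argument controls the level sets near points where $D_{x,t}u=0$.
\end{itemize}
The approximation is also unnecessary. Assumption (b) of \Cref{def:classical-sol} gives $\Omega_u^\pm$ Lipschitz boundaries. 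The field $F=\big(-\eta\nabla u,(u\pm1)\eta\big)$ is continuous up to $\Gamma^\pm$, with divergence $(u\pm1)\de_t\eta-\nabla u\cdot\nabla\eta$, plus the signed term $\eta(\de_tu-\Delta u)$ for sub/supersolutions. So Gauss--Green on Lipschitz domains, via interior smooth approximation, applies directly, and the $\Hc^d$-null uncovered part of $\Gamma^\pm$ is harmless. To avoid corners at $t=t_1,t_2$, first prove the identity for $\eta\in C^\infty_c(D\times(0,+\infty))$. Then recover the slice terms by testing with $\eta\,\chi_k(t)$, $\chi_k\to\ind_{[t_1,t_2]}$, using continuity of $t\mapsto\int(u+\mu)\eta\dif x$.

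\textbf{Weak-$\ast$ continuity of $\mu$.} For this you do not need the $O(|\tau-t|)$ width bound, which requires a speed bound you do not have for sub/supersolutions. Pointwise convergence $\ind_{\Omega_u^\pm(\tau)}\to\ind_{\Omega_u^\pm(t)}$ off the Lebesgue-null set $\Sigma_u^\pm(t)$ follows from continuity of $u$, and together with dominated convergence it suffices.
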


\begin{remark}
    The reason for the monotonicity assumption \eqref{e:op-classical-growth} in \Cref{def:classical-sol-op} is two-fold.
    First, from a physical point of view there is no heat flux pushing the free boundary backward, so its natural. Second, for enthalpy solutions such property holds (see \Cref{prop:one-phase:positivity-increasing}); therefore, since classical solutions (resp. sub/super-solutions) define enthalpy solutions the assumption holds for solutions and supersolutions. 
    It is natural to assume \eqref{e:op-classical-growth} also in the subsolution case as we define sub/supersolution mainly to understand the behavior of solutions, and so it leads to no loss of generality.
\end{remark}

\subsection{Relation with the heat equation and \texorpdfstring{$L^1$}{L1} integrability properties}\label{sec:l1-decay}

It comes with no surprise that Stefan problem has several links with the heat equation. 
In particular, for what is our concern, we use them only to use the comparison principle between subcaloric and supercaloric functions.

\begin{definition}[Caloric, subcaloric and supercaloric functions]
    Let $D\subset\R^d$, and $E \subset D \times[0,+\infty) = D_\infty$ be open sets, respectively in $\R^d$ and in $D_\infty$. We say that $u : D\times[0,+\infty) \to \R$ is \emph{subcaloric} (resp. \emph{supercaloric}) in $E$ if, for all nonnegative $\eta \in H^1_0(E)$ with bounded support it holds
    \begin{equation}\label{e:heat-subsolution-interior}
        \iint_{E} u \,\de_t\eta - \nabla u \cdot \nabla \eta \dif x \dif t\ge0. \qquad (\text{resp.}\; \le ).
    \end{equation}
    We say that $u$ is \emph{caloric} if it is both subcaloric and supercaloric.
\end{definition}

If $E = D_T = D \times[0,T)$ and $u$ is subcaloric (resp. supercaloric) in $E$, then for all nonnegative $\eta \in H^1(D_T)$ with bounded support and such that $\eta\equiv 0$ on $\de_L D_T$, then for all $0\le t_1 < t_2\le T$ it holds
\begin{equation}\label{e:heat-subsolution}
    \int_{D(t)} u\, \eta \dif x \bigg|_{t=t_1}^{t_2} \le \int_{t_1}^{t_2}\int_D u \,\de_t \eta - \nabla u \cdot \nabla \eta \dif x \dif t. \qquad (\text{resp.}\; \ge).
\end{equation}
The latter is classical and can be easily obtained by testing the inequality with $\eta(x,t) \, \ind^{\eps}_{[t_1,t_2]}(t)$, where $\ind^{\eps}_{[t_1,t_2]} \in C^\infty_c((t_1,t_2))$ is a smooth approximation of the characteristic function $\ind_{[t_1,t_2]}$.

\begin{proposition}\label{prop:subcaloric}
    Let $D\subset\R^d$ as above and $(u,\mu)$ be an enthalpy subsolution (resp. supersolution)  of the Stefan problem in $D_\infty = D\times[0,+\infty)$. Then $u^+$ is subcaloric (resp. $u^-$ is supercaloric) in $D_\infty$.
\end{proposition}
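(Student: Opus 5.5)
The plan is to test the enthalpy inequality \eqref{e:subsolution-enthalpy} with $\eta\,\phi_\eps(u)$, where $\phi_\eps$ is a smooth, nondecreasing approximation of $\ind_{\{u>0\}}$. For instance, take $\phi_\eps(s)=\min(s^+/\eps,1)$, mollified if needed. Here $\eta\ge0$ is a test function vanishing on $\de_L D_\infty$, as in the definition of subcaloric.

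Formally, the key structural fact is the following. On the support of $\phi_\eps(u)$ we have $u>0$, hence $\mu\equiv1$ there by \eqref{e:compatibility_enthalpy-phases}. So the time derivative of $\mu$ does not interact with $\phi_\eps(u)$: formally $\phi_\eps(u)\,\de_t\mu=0$, since $\mu$ can change only where $u\le0$ and there $\phi_\eps(u)=0$. In the limit $\eps\to0$ the equation therefore reduces to the heat inequality for $u^+$. The spatial term splits as
\[
\nabla u\cdot\nabla(\eta\phi_\eps(u))=\phi_\eps(u)\nabla u\cdot\nabla\eta+\phi_\eps'(u)|\nabla u|^2\eta .
\]
The second term is nonnegative, so it can be discarded with the right sign, giving $\iint u^+\de_t\eta-\nabla u^+\cdot\nabla\eta\ge0$ in the limit.

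To make this rigorous I would not test directly with a function of $u$, because $\mu$ has only weak-$\ast$ time continuity. Instead I would proceed in four steps.

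First, rewrite \eqref{e:subsolution-enthalpy} in a time-regularized form. Use Steklov averages in time, $[f]_h(t)=\frac1h\int_t^{t+h}f$, or equivalently difference quotients $\frac{(u+\mu)(t+h)-(u+\mu)(t)}{h}$. Since $\de_t u\in L^2_{loc}(L^2)$, this yields for a.e. $t$ and nonnegative $\psi\in H^1$ with bounded support, vanishing on $\de D$,
\[
\int_D \frac{(u+\mu)(t+h)-(u+\mu)(t)}{h}\,\psi\dif x+\int_D [\nabla u]_h\cdot\nabla\psi\le0 .
\]

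Second, choose $\psi=\eta(x,t)\phi_\eps(u(x,t+h))$; the time shift is the crucial choice. Then use the pointwise inequality
\[
\big(\mu(t+h)-\mu(t)\big)\phi_\eps(u(t+h))\ge0 .
\]
This holds because wherever $\phi_\eps(u(t+h))>0$ we have $\mu(t+h)=1\ge\mu(t)$, using $|\mu|\le1$. The latter bound follows from the compatibility condition and the comparison principle/construction; otherwise I would use $\mu\le1$ from the setting. This lets the $\mu$ term be dropped with the correct sign in the subsolution direction.

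Third, for the $u$ part, use convexity. With $\Phi_\eps'=\phi_\eps$ and $\Phi_\eps$ convex,
\[
\big(u(t+h)-u(t)\big)\phi_\eps(u(t+h))\ge \Phi_\eps(u(t+h))-\Phi_\eps(u(t)) .
\]
Let $h\to0$ using the $L^2$ regularity of $\de_t u$ and $\nabla u$. Then send $\eps\to0$, where $\Phi_\eps(u)\to u^+$ and $\phi_\eps(u)\nabla u\to\nabla u^+$, using $\nabla u=0$ a.e. on $\{u=0\}$. Dropping $\phi_\eps'(u)|\nabla u|^2\eta\ge0$ gives \eqref{e:heat-subsolution-interior} for $u^+$.

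The supersolution statement follows by applying this to $(-u,-\mu)$ via the skew-symmetry remark. The main obstacle I expect is the sign of the $\mu$-increment term in the time-discretized identity. In particular, the shift must be placed at $t+h$ and not at $t$: with the shift at $t$, the sign of $(\mu(t+h)-\mu(t))\phi_\eps(u(t))$ is not controlled.
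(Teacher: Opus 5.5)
Your proof is correct, but it takes a more roundabout route than the paper, and the reason you give for the detour is a misdiagnosis.

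The paper does what your heuristic paragraph describes. It tests \eqref{e:subsolution-enthalpy} directly with $p_\sigma(u)\eta$, where $p_\sigma\equiv0$ on $(-\infty,\sigma]$; this test function is admissible after a routine density argument, since $u$, $\de_t u$ and $\nabla u$ are in $L^2_{loc}$ and $\mu$ is bounded. In \eqref{e:subsolution-enthalpy} the time derivative falls only on the test function, never on $\mu$. Moreover, $\de_t(p_\sigma(u)\eta)=p_\sigma'(u)\,\de_tu\,\eta+p_\sigma(u)\,\de_t\eta$ vanishes a.e. outside $\{u>\sigma\}$, where $\mu\equiv1$. Hence $\iint\mu\,\de_t(p_\sigma(u)\eta)=\iint\de_t(p_\sigma(u)\eta)=0$, and the remaining $u$-term is integrated by parts in time. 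So neither the weak-$\ast$ continuity of $\mu$ nor any information on $\mu$ in $\{u=0\}$ is ever used.

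Your Steklov scheme, with the forward-shifted factor $\phi_\eps(u(t+h))$ and the convexity inequality, is a valid alternative. It does, however, need the extra input $\mu(\cdot,t)\le1$ on $\{u(\cdot,t)=0\}$. Your first justification of this does not work, because \eqref{e:compatibility_enthalpy-phases} says nothing on the zero set. You have to take $\mu\in[-1,1]$ from the paper's standing convention, which is legitimate.

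In exchange, your argument never uses the chain rule in time; the convexity inequality replaces it. So it does not really need $\de_tu\in L^2_{loc}(L^2)$ and would survive under weaker time regularity of $u$. The paper's short computation, by contrast, relies on that regularity but requires nothing about $\mu$ beyond compatibility.
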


\begin{proof}
    We only deal with the case of subsolutions, the other follows analogously. 

    For all $\sigma>0$, we introduce $p_\sigma \in C^\infty(\R)$ such that,
    \[
        p_\sigma(z) = 0 \quad\text{if}\quad z\le \sigma, \qquad p_\sigma(z) = 1\quad\text{if}\quad z\ge 2\sigma, \qquad\text{and}\qquad 0 \le p_\sigma'(z) \le \frac2\sigma.
    \]
    In particular, for all $\eta \in C^\infty_c(D\times(0,+\infty))$, $p_\sigma(u)\, \eta \in H^1_0(\Omega_u^+)$ is non-negative and with compact support in $D\times(0,+\infty)$.
    Therefore, since $\mu\equiv1$ in $\Omega^+$,  the subsolution condition \eqref{e:subsolution-enthalpy} reads as follows
    \[
            \iint (u+1) \de_t\big(p_\sigma(u) \eta\big) - \nabla u \cdot \nabla \big(p_\sigma(u) \eta\big) \dif x \dif t \ge0.
    \]
    Then, the time derivative reads as
    \begin{align*}
        \iint (u+1) \de_t\big(p_\sigma(u) \eta\big) \dif x \dif t
            &= \iint u \big(\de_tp_\sigma(u) \eta + p_\sigma(u)\de_t\eta\big) \dif x \dif t
                + \iint \de_t\big(p_\sigma(u) \eta\big)\dif x \dif t\\
            &= \iint  \big( u\,\de_t\eta - \de_t(u\,\eta)\big) p_\sigma(u)\dif x \dif t\\
            &= - \iint \de_tu\,\eta\, p_\sigma(u)\dif x \dif t,
    \end{align*}
    where the second term in the first line is zero since $p_\sigma(u)\,\eta \in H^1_0(\Omega^+_u)$.
    
        Lastly, the spatial term reads as
    \begin{align*}
        \iint \nabla u \cdot \nabla \big(p_{\sigma}(u) \eta\big) \dif x \dif t 
            &=\iint p'_{\sigma}(u) |\nabla u|^2\eta + (\nabla u \cdot \nabla \eta)p_{\sigma}(u) \dif x \dif t,
    \end{align*}
    and thus, since $p_{\sigma}'\ge 0$, dropping this nonnegative term from the inequality, the subsolution condition reads as follows
    \[
        - \iint \big(\de_t u \, \eta + (\nabla u \cdot \nabla\eta)\big) p_\sigma(u) \dif x \dif t \ge0.
    \]
    Finally, since $p_\sigma(u) \to \ind_{\Omega_u^+}$ as $\sigma\to0$, we conclude the proof by integrating by parts $u^+$ in the time variable.
\end{proof}

In the special case of solutions, we have the following.

\begin{proposition}\label{prop:caloric}
Let $D \subset\R^d$, $T \in (0,+\infty]$, and let $(u,\mu)$ be an enthalpy solution of the Stefan problem in $D_T = D \times[0,T)$.
Then, the sets $\Omega^\pm\subset D \times(0,T)$ are open and $u^\pm$ are caloric in $\Omega^\pm$.
\end{proposition}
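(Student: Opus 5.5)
The plan has two parts: first show that $\Omega^\pm$ are open, then show that $u^\pm$ satisfy the heat equation weakly inside them.

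\emph{Openness.} We rely on the continuity of the temperature established by Caffarelli--Evans, DiBenedetto and Ziemer, quoted in the introduction. Under the regularity of \Cref{def:enthalpy_solution}, $u$ admits a representative that is continuous in $D\times(0,T)$. With this representative, $\Omega^+=\{u>0\}$ and $\Omega^-=\{u<0\}$ are preimages of open sets, hence open in $D\times(0,T)$. If one prefers not to invoke that deep result, there is a local alternative near a point where $u$ is positive on a set of positive measure: $\mu\equiv1$ there, and by \Cref{prop:subcaloric} $u^+$ is subcaloric. However, this only gives an upper bound, so some continuity input is genuinely needed. I would simply cite the continuity theorem and fix the continuous representative from now on.

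\emph{Caloricity.} Fix $\eta\in C^\infty_c(\Omega^+)$ of either sign. Since $\operatorname{spt}\eta$ is compact in the open set $\Omega^+$ and $u$ is continuous, $u\ge\delta>0$ on $\operatorname{spt}\eta$. By \eqref{e:compatibility_enthalpy-phases}, $\mu\equiv1$ almost everywhere on $\operatorname{spt}\eta$ at every time. Extend $\eta$ by zero to an element of $C^\infty_c(D\times(0,T))$ and test \eqref{e:weak_solution} with $t_1=0$ and $t_2$ beyond the support, so that the boundary terms vanish. This gives
\[
\iint (u+1)\,\de_t\eta-\nabla u\cdot\nabla\eta \dif x\dif t=0 .
\]
The constant contributes $\iint\de_t\eta=0$, because $\eta$ has compact support in time. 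Therefore $\iint u\,\de_t\eta-\nabla u\cdot\nabla\eta=0$ for all smooth $\eta$ compactly supported in $\Omega^+$.

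The definition of caloric requires test functions $\eta\in H^1_0(\Omega^+)$ with bounded support, not just smooth ones. I would pass to these by density, using $u\in L^2_{loc}H^1$ and $\de_tu\in L^2_{loc}L^2$: the functional is continuous in the $H^1$ norm of $\eta$ on bounded sets. To handle the term $\iint u\,\de_t\eta$, rewrite it as $-\iint \de_t u\,\eta$, which is valid for compactly supported $\eta$. The case of $\Omega^-$ is identical with $\mu\equiv-1$, or follows via the skew-symmetry $(u,\mu)\mapsto(-u,-\mu)$.

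\emph{Main obstacle.} The only nontrivial step is openness, that is, the continuity of $u$, which is imported from the literature. Everything else is a direct localization of \eqref{e:weak_solution}.
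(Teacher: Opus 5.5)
Your proposal is correct and follows the same approach as the paper. Openness comes from the continuity of $u$ (Caffarelli--Evans, DiBenedetto, Ziemer), and caloricity from localizing \eqref{e:weak_solution} to test functions compactly supported in $\Omega^+$, where $\mu\equiv1$. The paper routes this through the truncated test function $p_\sigma(u)\eta$ from \Cref{prop:subcaloric}; since $u\ge\sigma_0>0$ on the support of $\eta$, one has $p_\sigma(u)\eta=\eta$ for small $\sigma$, so your direct test is the same argument, and your density remark for $H^1_0$ test functions covers a step the paper leaves implicit.
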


\begin{proof}
    The temperature $u$ is continuous by the classical results in \cite{DiBenedetto1980:Stefan2,DiBenedetto1982:Stefan1,CaffarelliEvans1983:ContinuityTwoPhaseStefan,Ziemer1982:Stefan,BaroniKuusiUrbano:TwoPhaseStefan}, and thus the sets $\Omega^\pm$ are open.
    Let us restrict ourselves to the positivity set, since the other case follows with no changes. 
    It makes sense to consider $\eta \in C^\infty_c(\Omega^+)$ as a test function.
    Then we can argue as in the previous proposition, taking $p_\sigma$ and recovering
    \[
        - \iint \big(\de_t u \, \eta + (\nabla u \cdot \nabla\eta)\big) p_\sigma(u) \dif x \dif t = \iint |\nabla u|^2\,p'_\sigma(u)\,\eta \dif x \dif t.
    \]
    But now, since $\eta\in C^\infty_c(\Omega^+)$, there exists $\sigma_0>0$ such that 
    \[
        \eta(x,t) \equiv0 \qquad \text{if} \quad u(x,t) \le \sigma_0,
    \]
    and thus, if $\sigma < \sfrac{\sigma_0}{2}$, then $p'_\sigma(u)\eta \equiv0,$
    and taking the limit as $\sigma\to0$ we conclude the proof.
\end{proof}

\begin{remark}
    It is a classical result that the extension of nonnegative caloric function to zero is subcaloric.
    However, since it includes the case of subsolutions, \Cref{prop:subcaloric} does not follow from \Cref{prop:caloric}, since $\Omega^\pm_u$ may be not be open if $u$ is not sufficiently regular (and the regularity holds only for $u$ that are temperatures of  solutions of the Stefan problem).
\end{remark}

Since $u^+$ is a solution of the heat equation and belongs to $H^1(D_T)$, for all $T>0$, then the standard comparison property of the heat equation holds.

\begin{lemma}\label{lemma:comparison-heat}
    Let $D\subset\R^d$, $T>0$, and $(u,\mu)$ be an enthalpy subsolution of the Stefan problem in $D_T = D\times[0,T)$.
    Let also $v\in H^1(D_T)$ is a supercaloric nonnegative function in $D_T$. 
    If $u\le v$ on $\de_P D_T$, then
    \[
        u(x,t) \le v(x,t) \qquad\text{for almost every}\quad (x,t) \in D_T. 
    \]
\end{lemma}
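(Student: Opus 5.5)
The approach is to reduce to the standard weak maximum principle for the heat equation, applied to $w := (u^+ - v)^+$, using that $u^+$ is subcaloric by \Cref{prop:subcaloric}. Since $v\ge0$, we have $u\le v$ if and only if $u^+\le v$, so it suffices to show $w\equiv0$ almost everywhere in $D_T$. The hypothesis $u\le v$ on $\de_P D_T$ gives $u^+\le v$ there. Hence $w$ vanishes on the lateral boundary (in the $H^1_{0,L}$ sense) and at $t=0$.

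First, by \Cref{prop:subcaloric}, $u^+$ satisfies \eqref{e:heat-subsolution} for nonnegative test functions vanishing on $\de_L D_T$. Subtracting the supercaloric inequality for $v$ shows that $z:=u^+-v$ is subcaloric in the integrated form
\[
\int_{D(t)} z\,\eta \dif x\bigg|_{t=0}^{\tau} \le \int_0^\tau\!\!\int_D z\,\de_t\eta - \nabla z\cdot\nabla\eta \dif x\dif t
\]
for such $\eta$. I would then test formally with $\eta=z^+=w$. This gives
\[
\tfrac12\int_D w^2(\cdot,\tau)\dif x + \int_0^\tau\!\!\int_D|\nabla w|^2 \le \tfrac12\int_D w^2(\cdot,0)\dif x = 0 .
\]
The identity $\int z\,\de_t w = \int w\,\de_t w = \tfrac12 \frac{d}{dt}\int w^2$ handles the time term. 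Therefore $w(\cdot,\tau)=0$ for almost every $\tau$, which is the claim.

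The main obstacle is justifying $\eta=w$ as a test function. It is not smooth, its support need not be bounded when $D$ is unbounded, and $\de_t w$ is only available through $\de_t u\in L^2_{loc}$ and $v\in H^1(D_T)$. I would first try a cutoff $\eta=\phi_R(x)\,w_\eps$. Here $\phi_R\in C^\infty_c(B_{2R})$ with $\phi_R\equiv 1$ on $B_R$ and $|\nabla\phi_R|\le 2/R$, and $w_\eps$ is a Steklov time average of $w$ (or $(z-\eps)^+$ mollified in time). This keeps the test function admissible: nonnegative, vanishing on $\de_L D_T$, and of bounded support. Passing $\eps\to0$ uses $w\in H^1(D_T)$, which follows from $u^+,v\in H^1$ on bounded time intervals.

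The cutoff produces an error term bounded by $\frac{C}{R}\|w\|_{L^2}\|\nabla w\|_{L^2}$ over $D_\tau$. It is finite because $u\in L^2_{loc}([0,T);H^1(D))$ and $v\in H^1(D_T)$, and it tends to zero as $R\to\infty$. The time-derivative term is treated by the chain rule for $(\cdot)^+$ in $H^1$, $\de_t w=\ind_{\{z>0\}}\de_t z$. Finally, I would apply Gronwall or simply the sign of the energy inequality on $[0,\tau]$ for every $\tau<T$.
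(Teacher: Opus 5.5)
Your proposal is correct and follows the same route as the paper: subtract the supercaloric inequality for $v$ from the subcaloric one, test with the positive part of the difference, and conclude from $\tfrac12\int_D w^2(\cdot,\tau)\dif x\le 0$. You are in fact more careful than the paper, which asserts that $u-v$ itself is subcaloric although \Cref{prop:subcaloric} only gives this for $u^+$; your reduction via $(u-v)^+=(u^+-v)^+$ (using $v\ge0$) and the spatial cutoff $\phi_R$ for unbounded $D$ supply the justification the paper leaves implicit.
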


\begin{proof}
    Let us call $w := (u-v)\in H^1(D_T)$. Then $w$ is subcaloric and $w\le 0$ on $\de_P D_T$, and thus $w^+$ is an admissible test for $w$.
    Thus, for all $t_0>0$, it holds
    \begin{align*}
        \int_{D(t_0)} (w^+)^2 \dif x 
            &\le \int_0^{t_0} \int_D w \de_t (w^+) - \nabla w \cdot \nabla (w^+) \dif x\dif t\\
            &= \int_0^{t_0} \int_D \frac12\de_t((w^+)^2) - |\nabla (w^+)|^2 \dif x \dif t \le \frac12 \int_{D(t_0)} (w^+)^2 \dif x,
    \end{align*}
    and thus $w^+(\cdot,t_0) \equiv0$ for all $t>0$ and hence we conclude the proof.
\end{proof}

Once we know that temperatures of one-phase solutions are subcaloric, the usual $L^1$-decaying for the homogeneous Dirichlet problem follows, that is.

\begin{lemma}[$L^1_x$-decaying]\label{prop:L1-integrability}
    Let $D\subset \R^d$ be an open set, let $h\in L^\infty(D)$, with $|h(x)|\le 1$, and let ${g \in H^1_0(D)\cap L^1(D)}$.  
    Let $(u,\mu)$ be the enthalpy solution of the Stefan problem in $D_\infty=D\times[0,+\infty)$ with initial temperature $g$, $u \equiv 0$ on $\de_L D_\infty$, and with initial mushy coefficient $h$. 
    Then, $u(\cdot,t)\in L^\infty(D)$ for all $t>0$. Moreover, for all $0\le t_1 \le t_2 \le +\infty$ it holds
    \begin{equation}\label{e:mono-L1-alltimes}
        \| u(\cdot,t_2)^+ \|_{L^1(D)} \le \| u(\cdot,t_1)^+ \|_{L^1(D)}
            \qquad\text{and}\qquad
        \| u(\cdot,t_2)^- \|_{L^1(D)} \le \| u(\cdot,t_1)^- \|_{L^1(D)}.
    \end{equation}
    In particular, $u \in L^1(D_T)$ for all $T>0$.
\end{lemma}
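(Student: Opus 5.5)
The plan is to reduce everything to properties of the heat equation, using \Cref{prop:subcaloric} and \Cref{lemma:comparison-heat}, together with the time-translation invariance of the problem. The solution is a solution with homogeneous Dirichlet data, so it is both a sub- and a supersolution. By \Cref{prop:subcaloric}, $u^+$ is subcaloric in $D_\infty$ and $u^-$ (with the sign convention $u=u^++u^-$, i.e.\ $u^-\le0$) is supercaloric, so $|u^-|$ is subcaloric. Both vanish on $\de_L D_\infty$.

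\textbf{Step 1: $L^\infty$ bound.} For fixed $t_1\ge0$, let $v$ be the solution of the heat equation in $D\times[t_1,+\infty)$ with zero lateral data and initial value $u(\cdot,t_1)^+$. Its Dirichlet heat semigroup is dominated by the whole-space heat kernel. Here $u(\cdot,t_1)\in H^1_0(D)$ for a.e.\ $t_1$, and for $t_1=0$ it is $g$. The restriction $(u,\mu)$ on $D\times[t_1,\infty)$ is again an enthalpy subsolution, by \eqref{e:weak_solution} applied from $t_1$. \Cref{lemma:comparison-heat}, applied after translating time, then gives
\[
0\le u^+(x,t)\le v(x,t)\le (4\pi(t-t_1))^{-d/2}\|u(\cdot,t_1)^+\|_{L^1(D)}.
\]
With $t_1=0$ this yields $u(\cdot,t)\in L^\infty(D)$ for $t>0$, since $g\in L^1$. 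The same argument with $-u^-$ handles the negative part.

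\textbf{Step 2: $L^1$ monotonicity.} I would use the comparison $u^+(\cdot,t_2)\le v(\cdot,t_2)$ together with the $L^1$-contractivity of the Dirichlet heat semigroup: $\int_D v(\cdot,t_2)\le\int_D u(\cdot,t_1)^+$, because the Dirichlet kernel $p_D(t,x,y)$ satisfies $\int_D p_D\,dx\le1$. Integrating gives \eqref{e:mono-L1-alltimes} for a.e.\ $t_1$. To avoid constructing $v$, an alternative is to test the subcaloric inequality \eqref{e:heat-subsolution} for $u^+$ with a cutoff $\eta_R\nearrow1$, where $\eta_R\in C^\infty_c(D)$ and $0\le\eta_R\le1$:
\[
\int u^+\eta_R\Big|_{t_1}^{t_2}\le -\int_{t_1}^{t_2}\!\!\int\nabla u^+\cdot\nabla\eta_R .
\]
However, the right side need not have a sign, and controlling it as $R\to\infty$ when $D$ is unbounded and $\nabla\eta_R$ hits $\de D$ is delicate. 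For this reason I prefer the semigroup comparison.

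\textbf{Step 3: all times.} To pass from a.e.\ $t_1$ to every $t_1$, and to include $t_2=+\infty$, I would use the continuity of $u$ in time. Since $\de_t u\in L^2_{loc}L^2$, the map $t\mapsto u(\cdot,t)$ is continuous into $L^2_{loc}$, and in fact $u\in C([0,T];L^2(D))$. Lower semicontinuity of the $L^1$ norm under $L^1_{loc}$ convergence (Fatou) then gives the inequality for every $t_2$. For the left endpoint, I would approximate $t_1$ from the right by admissible times $s_n\downarrow t_1$ and use $\|u(s_n)^+\|_{L^1}\le\|u(t_1)^+\|_{L^1}$. The latter follows by applying Step 2 with the restarted comparison at $t_1$, which is valid whenever $u(\cdot,t_1)^+\in L^1$. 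Finally, $u\in L^1(D_T)$ follows by integrating the bound $\|u(t)\|_{L^1}\le\|g\|_{L^1}$ over $t\in[0,T]$.

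\textbf{Main obstacle.} The delicate point is justifying the comparison with $v$ in \Cref{lemma:comparison-heat}, which requires $v\in H^1(D_T)$, when $D$ is arbitrary and unbounded and the initial datum $u(\cdot,t_1)^+$ lies only in $H^1_0\cap L^1$. I would first treat bounded truncations $D\cap B_R$, where $u^+\le v_R$ on the enlarged boundary fails, and therefore instead compare with $v$ built on $D$ from $H^1_0$ data, which is in $H^1$ by standard energy estimates.
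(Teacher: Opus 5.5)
Your proposal is correct and follows essentially the paper's argument. The paper simply notes that $u^+$ is subcaloric and $u^-$ supercaloric, both vanishing on $\partial_L D_\infty$, and compares them with the caloric functions in $\R^d$ with initial data $g^\pm$ (implicitly restarting at $t_1$ for general times), reading off the $L^\infty$ and $L^1$ bounds from the heat kernel. Your Dirichlet flow on $D$ is just a smaller barrier of the same kind. Comparing directly with the whole-space solution (nonnegative, hence above $u^+$ on $\partial_L D_\infty$, mass-preserving, and in $H^1(\R^d\times(0,T))$ for $H^1$ data) removes the dependence on the geometry of $D$ in your ``main obstacle''. Also, in Step 3 the approximation $s_n\downarrow t_1$ is superfluous, since the restarted comparison at $t_1$ that you invoke already gives the inequality there directly.
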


\begin{proof}
    Since $u^+$ and $u^-$ are respectively subcaloric and supercaloric and vanishes on the lateral boundary, the estimate follows by comparison with the caloric functions in $\R^d$ and initial temperature $g^+$ and $g^-$ respectively.
\end{proof}

\subsection{Growth monotonicity of one-phase solutions}\label{sec:one-phase-subsol}
Here we investigate the monotonicity properties of one-phase solutions. 
These can be interpreted as the \emph{mushy region not draining energy from the system}.

\begin{lemma}\label{prop:one-phase:increasing-enthalpy}
    Let $D\subset\R^d$ be an open set, let $g\in H^1(D)$ be nonnegative, and $h\in L^\infty(D)$ with $|h(x)| \le 1$.
    If $(u,\mu)$ is a one-phase enthalpy solution of the Stefan problem with initial temperature $g$ and initial mushy coefficient $h$, then, for all $0 \le t_1 < t_2$ there exists $N = N(t_1,t_2)\subset D$ with $|N|=0$ such that
    \[
        \mu(x,t_2) \ge \mu(x,t_1) \qquad\text{for all}\quad x \in D\setminus N.
    \]
\end{lemma}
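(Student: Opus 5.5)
We plan to obtain the monotonicity of $\mu$ in time by a time-shift comparison: for $t_1<t_2$, set $s:=t_2-t_1$ and compare the solution $(u,\mu)$ restricted to $[t_1,+\infty)$ with the time-translated pair $(u_s,\mu_s)(x,t):=(u,\mu)(x,t+s)$. The obstacle is that we do not a priori know $\mu(\cdot,s)\ge h$ or $u(\cdot,s)\ge g$, so instead we argue directly that $\mu$ is pointwise nondecreasing, which reduces to showing $\mu(\cdot,t_2)-\mu(\cdot,t_1)\ge0$ a.e. by testing the weak formulation \eqref{e:weak_solution} appropriately.

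Concretely, the key observation is that in the one-phase setting $u\ge0$, and by \eqref{e:compatibility_enthalpy-phases} $\mu=1$ on $\{u>0\}$, its maximal value. So at almost every $x$, either $u(x,t_2)>0$, in which case $\mu(x,t_2)=1\ge\mu(x,t_1)$ trivially, or $u(x,t_2)=0$. It therefore suffices to show $\mu(\cdot,t_2)\ge\mu(\cdot,t_1)$ a.e. on $Z:=\{u(\cdot,t_2)=0\}$. For this we use that $u^+$ is subcaloric while $u\ge0$: formally $\partial_t\mu=\Delta u-\partial_t u$, and on the interior of $\{u=0\}$ the distribution $\Delta u$ is a nonnegative measure (as $u\ge0$ attains its minimum there), while $\partial_t u$ vanishes. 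To make this rigorous, we test \eqref{e:weak_solution} with $\eta(x,t)=\phi(x)\,\psi_\sigma(u(x,t))$-type cutoffs, or more simply with $\eta=\phi(x)\ge0$ time-independent and then localize to $Z$ via $\phi\,(1-p_\sigma(u))$ where $p_\sigma$ is as in \Cref{prop:subcaloric}. With $\eta=\phi\ge0$ we get
\[
\int_D (u+\mu)\phi\,dx\Big|_{t_1}^{t_2}=-\int_{t_1}^{t_2}\!\!\int_D\nabla u\cdot\nabla\phi\,dx\,dt,
\]
and the right-hand side is $\ge0$ whenever $\phi$ is supported where $u\equiv0$ near the relevant times; the issue is that $\{u=0\}$ varies in time.

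The cleaner route, which we will actually follow, uses monotonicity of the positive phase and a truncation: with $q_\sigma(z):=\min\{z/\sigma,1\}$, test with $\eta=\phi\,(1-q_\sigma(u))\ge0$ (admissible in $H^1$ by the regularity \ref{item:as:regularity}, after a standard time-mollification). The term $\mu\,\partial_t\eta$ only sees where $0<u<\sigma$, where $\mu=1$, hence it equals $-\partial_t q_\sigma(u)\phi$ integrated and combines with the $u$-terms into quantities of order $\sigma$ plus $\int\phi\,q_\sigma'(u)|\nabla u|^2\ge0$ and $\int (1-q_\sigma(u))\nabla u\cdot\nabla\phi$. Letting $\sigma\to0$, the $u$-contributions vanish (they are supported on $\{0<u<\sigma\}$ or multiplied by $u\le\sigma$, and $\nabla u=0$ a.e. on $\{u=0\}$), yielding $\int_D \mu\,\phi\,\ind_{\{u=0\}}\big|_{t_1}^{t_2}\ge -o(1)$ in a form that, together with $\mu(\cdot,t_2)=1$ on $\{u(\cdot,t_2)>0\}$, gives $\int_D(\mu(\cdot,t_2)-\mu(\cdot,t_1))\phi\,dx\ge0$ for every $\phi\ge0$ supported in a set where the sign of $\mu(t_2)-\mu(t_1)$ is fixed; by arbitrariness of $\phi$ we conclude the a.e. inequality. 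The main obstacle is this limit $\sigma\to0$: controlling the boundary terms at $t_1,t_2$ (which require the pointwise-in-time traces, available by the weak-$\ast$ continuity \ref{item:as:continuity} and $u\in C([0,T);L^2)$) and justifying the chain rule for $q_\sigma(u)$ in time, which we will handle by Steklov averaging in $t$.
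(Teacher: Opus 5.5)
\paragraph{The $\sigma\to0$ limit drops a term}
The truncation test function $\eta=\phi(1-q_\sigma(u))$ is a good choice, but your bookkeeping as $\sigma\to0$ loses a term of order one, and the fact you would need to compensate for it is not available at this stage. The $\mu$-part of the time derivative gives
\[
\int_{t_1}^{t_2}\!\!\int_D \mu\,\partial_t\eta\dif x\dif t=-\int_D\phi\,q_\sigma(u)\dif x\Big|_{t=t_1}^{t_2}\xrightarrow[\sigma\to0]{}-\int_D\phi\,\ind_{\{u>0\}}\dif x\Big|_{t=t_1}^{t_2}.
\]
This term does not vanish, and it does not combine with the $u$-terms into something of order $\sigma$. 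Consequently the inequality you derive, $\int_D\mu\,\phi\,\ind_{\{u=0\}}\big|_{t_1}^{t_2}\ge-o(1)$, is false in general. If between $t_1$ and $t_2$ the positive phase invades a region where $\mu(\cdot,t_1)=h>0$, the left-hand side is negative and independent of $\sigma$. Passing from it to $\int_D(\mu(\cdot,t_2)-\mu(\cdot,t_1))\phi\ge0$ would require $\int_D\phi\,\ind_{\{u(\cdot,t_2)>0\}}\ge\int_D\phi\,\ind_{\{u(\cdot,t_1)>0\}}$. That is the monotonicity of the positive phase you announce at the start. In the paper, however, \Cref{prop:one-phase:positivity-increasing} is proved after, and by invoking, the present lemma, so using it here is circular unless you prove it independently.

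\paragraph{The repair, and comparison with the paper}
The fix is easy and removes the need for that input. Since $q_\sigma(u)>0$ only where $u>0$, and there $\mu=1$ by \eqref{e:compatibility_enthalpy-phases}, at each time $\mu(1-q_\sigma(u))+q_\sigma(u)=\mu$ almost everywhere. Move the term above to the left-hand side. The remaining $u$-boundary terms, $u(1-q_\sigma(u))$ and $\min\{u,\sigma\}^2/(2\sigma)$, are bounded by $\sigma$. You then get
\[
\int_D\big(\mu(\cdot,t_2)-\mu(\cdot,t_1)\big)\phi\dif x+O(\sigma)=-\int_{t_1}^{t_2}\!\!\int_D\big(1-q_\sigma(u)\big)\nabla u\cdot\nabla\phi\dif x\dif t+\int_{t_1}^{t_2}\!\!\int_D\phi\,q_\sigma'(u)|\nabla u|^2\dif x\dif t.
\]
The first term on the right tends to $0$, since $\nabla u=0$ almost everywhere on $\{u=0\}$, and the second is nonnegative. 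This gives the claim for every $0\le\phi\in C^\infty_c(D)$, with no need for the reduction to $Z$ or for restricting the support of $\phi$.

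So corrected, your proof is valid and genuinely different from the paper's, which takes two lines. The paper applies the comparison principle (\Cref{lemma:comparison-stefan}) to $(u,\mu)$ restarted at $t_1$ and to the stationary pair $(\underline u,\underline\mu)\equiv(0,\mu(\cdot,t_1))$, rather than to the time-translate that blocked you. The hypotheses then reduce to $u^-=0$ and equal initial mushy coefficients. Your route avoids the comparison principle altogether: it uses only the weak formulation, compatibility, and $\partial_t u\in L^2$ (for the chain rule and the time traces). It also yields more, since the identity holds for $\phi$ of any sign. It shows that $\mu(\cdot,t_2)-\mu(\cdot,t_1)$ is the distributional limit of the nonnegative densities $\sigma^{-1}\int_{t_1}^{t_2}|\nabla u|^2\ind_{\{0<u<\sigma\}}\dif t$.
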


\begin{proof}
    The proof is a direct application of the Comparison Principle. Indeed, if $(\underline u,\underline \mu)$ is the one-phase stationary solution with initial temperature $\underline g \equiv0$ and initial mushy coefficient $\underline h = \mu(x,t_1)$, then for all $t\ge0$ they hold
    \[  
        \underline u \equiv 0 
        \qquad\text{and}\qquad
        \underline \mu(x,t) = \mu(x,t_1) \qquad\text{for almost every}\quad x\in D.
    \]
    Thus, by the Comparison principle, for all $t\ge0$, it holds
    \[
        \mu(x,t_2) \ge \mu(x,t_1) \qquad\text{for almost every}\quad x\in D,
    \]
    and so we conclude.
\end{proof}

The previous, however, does not involve a monotonicity of $\Omega_u^+(t)$, that is the scope of the following lemma.   This is well-known when the initial temperature $g \in C(D)$ is continuous. We give here a proof in the general case.

\begin{lemma}[Monotonicity of the positivity region]\label{prop:one-phase:positivity-increasing}
    Let $D\subset\R^d$ be an open set, let $g\in H^1(D)$ be nonnegative such that $g\not\equiv0$, and $h\in L^\infty(D)$ with $|h(x)| \le 1$.
    If $(u,\mu)$ is a one-phase enthalpy solution of the Stefan problem with initial temperature $g$ and initial mushy coefficient $h$, then, for all $0\le t_1 < t_2$ it holds
    \begin{equation}\label{e:one-phase:positivity-increasing}
        \big| \Omega_u^+(t_1) \setminus \Omega_u^+(t_2) \big|=0 \qquad\text{for all}\quad t\ge0.
    \end{equation}
\end{lemma}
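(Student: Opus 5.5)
We have to show that, up to a null set, a point where the temperature is positive at time $t_1$ still has positive temperature at every later time $t_2$. The plan is to compare $u$ with a solution started at time $t_1$ that sees only the positive temperature and carries no latent-heat information.

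\emph{Step 1: a subsolution started at $t_1$.} By the time-translation invariance of \eqref{e:weak_solution}, $(u(\cdot,t_1+\cdot),\mu(\cdot,t_1+\cdot))$ is an enthalpy solution in $D_\infty$. Its initial temperature is $u(\cdot,t_1)\ge0$ and its initial mushy coefficient is $\mu(\cdot,t_1)$, which by \eqref{e:compatibility_enthalpy-phases} equals $1$ a.e.\ on $\Omega_u^+(t_1)$. Let $w$ be the solution of the heat equation in $\Omega_u^+(t_1)\times(0,\infty)$ with initial datum $u(\cdot,t_1)$ and zero lateral boundary data, extended by $0$ outside. If $\Omega_u^+(t_1)$ is not open (it is open for $t_1>0$ by \Cref{prop:caloric}; at $t_1=0$ we only have $g\in H^1$), I would instead use an increasing family of open sets exhausting it up to a null set, for instance $\{g_k>0\}$ with $g_k$ continuous, $g_k\le g$, $g_k\to g$. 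By the classical maximum principle $w\ge0$.

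\emph{Step 2: comparison.} The pair $(w,\underline\mu)$, where $\underline\mu=1$ on $\Omega_u^+(t_1)$ and $\underline\mu=\mu(\cdot,t_1)$ elsewhere, satisfies the compatibility condition. Testing \eqref{e:subsolution-enthalpy} with nonnegative $\eta$ gives the boundary flux term $\int -\partial_\nu w\,\eta\ge0$, because $w$ vanishes on the boundary and is positive inside. Extension by zero of a nonnegative caloric function is subcaloric, and the jump term is absent since $\underline\mu$ is constant in time. Hence $(w,\underline\mu)$ is an enthalpy subsolution. Its initial data are identical to those of the shifted solution. \Cref{lemma:comparison-stefan}, or \Cref{lemma:comparison-heat} applied only to temperatures, then gives $u(\cdot,t_1+s)\ge w(\cdot,s)$ a.e.

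\emph{Step 3: strict positivity.} Since $u(\cdot,t_1)>0$ on $\Omega_u^+(t_1)$ and $u(\cdot,t_1)\not\equiv0$ there, the strong maximum principle on each connected component $U$ of the open set gives $w(\cdot,s)>0$ in $U$ for every $s>0$, provided $u(\cdot,t_1)\not\equiv0$ on $U$. That is automatic, because $u(\cdot,t_1)>0$ on $U$. Therefore $\Omega_u^+(t_1)\subset\Omega_u^+(t_2)$ up to a null set. At $t_1=0$ we apply this to each $g_k$ and take the union over $k$; the union differs from $\Omega_g^+$ by a null set.

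\emph{Main obstacle.} I expect the hardest step to be the rigorous verification that $(w,\underline\mu)$ is an enthalpy subsolution in the sense of \Cref{def:weak-sub-super} when the domain is only an open set with irregular boundary. The sign of the flux term cannot then be read off pointwise. I would handle it with the cut-off $p_\sigma(w)\eta$ argument of \Cref{prop:subcaloric}, run in reverse: the dropped term $\iint p_\sigma'(w)|\nabla w|^2\eta\ge0$ has exactly the right sign.
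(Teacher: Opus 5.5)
\textbf{The case $t_1=0$ has a genuine gap.} Your treatment of $t_1>0$ is fine and matches what the paper dismisses as classical. There $\Omega_u^+(t_1)$ is open, and comparison with the zero extension of the Dirichlet heat flow, together with the strong maximum principle, gives the claim. The case $t_1=0$ is the only one where the paper does real work. You assume that $\Omega_g^+$ can be exhausted, up to a null set, by open sets $\{g_k>0\}$ with $g_k$ continuous and $g_k\le g$. For $g\in H^1(D)$ and $d\ge2$ this is false in general. $\Omega_g^+$ is only quasi-open, and it may contain no nonempty open set, not even up to null sets.

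Here is an example. Let $\{q_j\}$ be dense in $\R^d$ and let $r_j\to0$ so fast that $\sum_j|B_{2r_j}|<|B_1|$. Let $\psi_j$ be capacitary cut-offs with $\psi_j=0$ on $\overline B_{r_j}(q_j)$, $\psi_j=1$ off $B_{2r_j}(q_j)$, and $\sum_j\|\nabla\psi_j\|_{L^2}^2<\infty$; this is possible since small balls have small capacity when $d\ge2$. Take $\phi\in C^\infty_c(B_2)$ with $\phi=1$ on $B_1$, and set $g:=\phi\wedge\inf_j\psi_j$. Then $g\in H^1(\R^d)$. Writing $F:=\bigcup_j\overline B_{r_j}(q_j)$, Borel--Cantelli gives $\{g>0\}\cap B_1=B_1\setminus F$ up to a null set. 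This set has positive measure, yet every ball contains some $q_j$ and so meets $F$ in positive measure. Hence every continuous $g_k\le g$ satisfies $g_k\le 0$ on $B_1$, so the union of the sets $\{g_k>0\}$ misses $B_1\setminus F$. No exhaustion of $\Omega_g^+$ by open subsets can rescue your Steps 1 and 3 at $t_1=0$.

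\textbf{What is missing.} You need to run the argument directly on the quasi-open set. The paper localizes so that $\Omega_g^+$ is bounded and $g\in H^1_0(\Omega_g^+)$, and then splits $\Omega_g^+$ into quasi-connected components. On each component the first Dirichlet eigenvalue is simple with a positive eigenfunction. By Reed--Simon (Theorem XIII.44) the Dirichlet heat semigroup is therefore positivity improving there, and this replaces your strong maximum principle on connected components. With this in hand the rest of your scheme carries over unchanged. The zero extension of the heat flow of $g$, paired with the time-independent coefficient equal to $1$ on $\Omega_g^+$ and to $h$ elsewhere, is an enthalpy subsolution, and comparison gives $u(\cdot,t)>0$ a.e.\ on $\Omega_g^+$ for every $t>0$.

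The step you singled out as the main obstacle, the subsolution property on irregular sets, is routine; the paper simply asserts that the zero extension is subcaloric. The real difficulty is positivity at time $0^+$ on a set that may have empty essential interior.
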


\begin{proof}
    For all $t>0$, $u(\cdot,t)$ is continuous and thus $\Omega_u^+(t)$ is an open set; so classical results for regular caloric functions apply and the lemma follows. 
    On the other hand, for $t=0$, $g\in H^1(D)$, and therefore $\Omega_g^+$ is a \emph{quasi-open} set, and so we need to check that the heat equation satisfies the same functional properties also on these sets.
    Indeed, in what follows we recover them relying on the semigroup formulation of the heat equation (see \cite{ReedSimon1978:MMMPIV-operators,Brezis2011:FA} and the references therein), together with the functional properties on quasi-open sets (see \cite{BucurButtazzo2005:Shape-Optimization,Velichkov:BookShapeOptimization}).

    We schematically analyze the functional properties we need.
    \begin{itemize}
        \item Up to multiplying $g$ by a suitable cut-off, we can assume $\Omega_g^+$ to be bounded and $g\in H^1_0(\Omega_g^+)$ (the cut-off localizes the problem to $D\cap B_R$, for some $R>0$).
        \item For each \emph{quasi-connected} component $E\subset\Omega_g^+$ (see~\cite[Theorem~4.4.3]{Velichkov:BookShapeOptimization} for a definition), it holds that ${g\in H^1_0(E)}$ (see~\cite[Lemma 4.1.4]{BucurButtazzo2005:Shape-Optimization}). Therefore, we can assume $\Omega_g^+$ to be quasi-connected.
        \item Since $\Omega_g^+$ is bounded, the spectrum of $-\Delta$ is discrete. Moreover, since $\Omega_g^+$ is quasi-connected, the first eigenvalue is simple  and the corresponding eigenfunction is positive (\cite[Lemma 4.4.5]{Velichkov:BookShapeOptimization}), i.e.,
        \begin{equation}\label{e:simplicity-eigenvalue}
            \lambda_1(\Omega_g^+) < \lambda_2(\Omega_g^+) 
                \qquad\text{and}\qquad
            \Omega_g^+ = \{\phi_1 >0\},
        \end{equation}
        where $\lambda_i$ are the Dirichlet eigenvalues in $H^1_0(\Omega_g^+)$ and $\phi_1$ is the first eigenfunction.

        \item For all $\lambda<\lambda_1(\Omega_g^+)$, the resolvent $((-\Delta)-\lambda)^{-1}$ is positivity preserving, that is,
        \[
            ((-\Delta)-\lambda)^{-1}f\ge0 \qquad \text{for all non-negative}\quad f\in L^2(\Omega_g^+).
        \]
        Therefore, also the heat semi-group $(e^{-t\Delta})_{t>0}$ is positivity preserving \cite[page~204]{ReedSimon1978:MMMPIV-operators}.
        \item By \cite[Theorem XIII.44]{ReedSimon1978:MMMPIV-operators}, positivity preserving semi-groups, whose generators satisfy \eqref{e:simplicity-eigenvalue} is positivity improving, that is, for all nonnegative $f\in L^2(\Omega_g^+)$, $f\not\equiv 0 $, and for all $t>0$, it holds
        \begin{equation}\label{e:def:semigroup}
           v_f(x,t) :=  (e^{-t\Delta} f)(x) > 0 \qquad\text{for almost every}\quad x \in \Omega_g^+.
        \end{equation}
        \end{itemize}

    Thus, taking $g\in H^1_0(\Omega_g^+)$, where $\Omega_g^+$ is a quasi-connected bounded set, if $v_g$ is as in \eqref{e:def:semigroup}, then semi-group theory (see \cite[\S7]{Brezis2011:FA}) implies that $v_g \in C^1([0,+\infty);H^1_0(D))$. Thus, integrating by parts in time, it satisfies the following integral identity
    \[
        \int_0^T\int_{\Omega_g^+} v_g \de_t \eta - \nabla  v_g \cdot \nabla \eta \dif x\dif t=0,
    \]
    for all $T>0$ and $\eta \in H^1_0(\Omega_g^+\times(0,T))$.
    Let us now define $\widetilde v_g$ the extension to zero of $v_g$ in $D$.
    Then $\widetilde v_g$ a subsolution of the heat equation in $D\times[0,+\infty)$, that is,
    \[
        \iint \widetilde v_g \,\de_t \eta - \nabla  \widetilde v_g \cdot\nabla \eta \dif x \dif t \ge0 \qquad\text{for all}\quad \eta \in C^\infty_c(D\times(0,+\infty)).
    \]
    Lastly, let us define $ \widetilde \mu_g\in L^\infty(D\times(0,+\infty))$ as 
    \[
          \widetilde \mu_g(x,t) = 1 \qquad\text{if}\quad x\in \Omega_g^+
            \qquad\text{and}\qquad
          \widetilde \mu_g(x,t) = h(x) \qquad \text{otherwise}.
    \]
    The pair $(\widetilde v_g,\widetilde \mu_g)$ satisfies \eqref{e:compatibility_enthalpy-phases} and (since $\tilde\mu_g$ does not depend on time) 
    \[
        \iint \widetilde\mu_g\,\de_t \eta \dif x \dif t =0 \qquad \text{for all}\quad \eta\in C^\infty_c(D\times(0,+\infty)),
    \]
    and therefore $(\widetilde v_g,\widetilde \mu_g)$ is a subsolution of the Stefan problem.
    By construction (and \Cref{prop:one-phase:increasing-enthalpy}) it follows that
    \[
        \widetilde v_g(x,0) \le g(x) \qquad\text{and}\qquad \widetilde \mu_g(x,0) \le \mu(x,0) \qquad\text{for almost every}\quad x\in D,
    \]
    and therefore, the comparison principle (\Cref{lemma:comparison-stefan}) implies that, for all $t>0$, it holds
    \[
        0 < \widetilde v_g(x,t) \le u(x,t) \qquad \text{for almost every}\quad x \in \Omega_g^+,
    \]
    and this concludes the proof.
\end{proof}

\begin{remark}
    In the literature, there are results on the behavior of the enthalpy in the zero region. In particular, Berger and Rogers in \cite{RogersBerger1984:MonotonicityMushyRegionTwoPhaseStefan} proved that the \emph{mushy region} is non increasing. 
    Namely, let $(u,\mu)$ be an enthalpy solution of the Stefan problem in $D\times[0,+\infty)$, then the mushy region at time $t\ge0$, $\mathcal M(t) \subset D$ is defined as
    \[
        \mathcal M(t) := \big\{x \in D : |\mu(x,t)|< 1 \big\}.
    \]
    In \cite{RogersBerger1984:MonotonicityMushyRegionTwoPhaseStefan}, the authors proved that for all $0\le t_1 < t_2$ it holds
    \[
        \big|\mathcal M(t_2) \setminus \mathcal M(t_1)\big|=0.
    \]
    \Cref{prop:one-phase:positivity-increasing} on the other hand gives that 
    \[
        \big|\Omega_u^+(t_1) \setminus \Omega_u^+(t_2)\big|=0.
    \]
We notice that $\Omega_u^+(t)\cap\mathcal M(t)=\emptyset$ for every $t>0$. In particular, if $\Omega_u^+(t)\cup\mathcal M(t)=D$ for all $t\ge 0$, then the two results are equivalent.
On the other hand this may not happen since no conditions are imposed over $h(x)$ (except for $|h(x)|\le1$) nor over the geometry of $\Omega_u^+(t)$, so the presence of regions where
\[
    u \equiv 0 \qquad\text{and}\qquad \mu \equiv 1,
\]
is not (a priori) ruled out.
\end{remark}

\section{Total enthalpy monotonicity}\label{sec:monotonicity-formula}

In this section we prove \Cref{thm:total-monotonicity-formula}. We split the proof into two parts: in \Cref{subsec:mono:monotonicity} we prove the well-posedness and monotonicity of $\mathcal H(t)$, while in \Cref{subsec:mono:continuity} we prove its continuity.

\subsection{Proof of the monotonicity of the enthalpy}\label{subsec:mono:monotonicity}

We prove the first half of \Cref{thm:total-monotonicity-formula} relying on an energy conservation law for one-phase entire solutions. Before doing so, we observe that, by \Cref{prop:one-phase:increasing-enthalpy},  for all one-phase solutions $(u,\mu)$ it holds $\mu(x,t) \ge h(x)$, and thus
\[
    \mathcal H(t) = \mathcal H_0(t) = \int_{D} \big(u(x,t) + (\mu(x,t) - h(x)) \big) \dif x.
\]
Therefore, we can define the signed enthalpy $\mathcal H_0(t)$ with respect to all $\widetilde h\in L^\infty(D)$ such that $h_0(x) \le h(x)$ for almost every $x\in D$.

\begin{lemma}\label{lemma:energy-conservation}
    Let $g\in H^1(\R^d)$ be nonnegative and let ${h\in L^\infty(D)}$ satisfies $|h(x)|\le 1$.
    Let the pair $(u,\mu)$ be the corresponding one-phase enthalpy solution in ${\R^d\times(0,+\infty)}$ with initial temperature $g$ and initial mushy coefficient $h$.
    Let $\widetilde h \in L^\infty(\R^d)$ satisfies $\widetilde h(x) \le h(x)$ for almost every $x\in D$.
    If the signed initial total enthalpy with respect to $\widetilde h$ is finite, namely, 
    \[
        \widetilde{\mathcal H}_0(0):= \int_{\R^d} \big(g(x) + \big(h(x)-\widetilde h(x)\big)\big) < +\infty,
    \]
    then 
    $$\widetilde{\mathcal H}_0(t):=  \int_{\R^d} \big(u(x,t) + (\mu(x,t) - \widetilde h(x)) \big) \dif x,$$ 
is constant in time.
\end{lemma}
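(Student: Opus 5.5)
The plan is to test the weak formulation \eqref{e:weak_solution} with a family of spatial cut-offs $\eta_R(x)=\phi(x/R)$, where $\phi\in C^\infty_c(\R^d)$, $0\le\phi\le1$, $\phi\equiv1$ on $B_1$ and $\spt\phi\subset B_2$, and then let $R\to\infty$. Since $\eta_R$ does not depend on time, \eqref{e:weak_solution} gives, for $0\le t_1<t_2$,
\[
\int_{\R^d}(u+\mu)\eta_R\dif x\bigg|_{t=t_1}^{t_2}=-\int_{t_1}^{t_2}\int_{\R^d}\nabla u\cdot\nabla\eta_R\dif x\dif t .
\]
Because $\eta_R$ has compact support, we may subtract $\int\widetilde h\,\eta_R$ at both times without changing the left-hand side. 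This rewrites it as the difference of $\int (u+\mu-\widetilde h)\eta_R$ at the two times.

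\textbf{Integrability and convergence of the left-hand side.} By \Cref{prop:one-phase:increasing-enthalpy}, $\mu(\cdot,t)\ge h\ge\widetilde h$ almost everywhere, and $u\ge0$, so the integrand $u+\mu-\widetilde h$ is nonnegative. Monotone convergence then shows that $\int(u+\mu-\widetilde h)\eta_R\to\widetilde{\mathcal H}_0(t)\in[0,+\infty]$, at least along $\phi$ chosen radially nonincreasing so that $\eta_R\nearrow1$. At $t=0$ this limit is finite by assumption. To conclude, it therefore suffices to show that the flux term vanishes as $R\to\infty$; this gives $\widetilde{\mathcal H}_0(t_2)=\widetilde{\mathcal H}_0(t_1)$, and in particular finiteness at all times.

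\textbf{The flux term.} This is the main obstacle, since we only know $u\in L^2_{loc}([0,\infty);H^1(\R^d))$. The crude bound $|\nabla\eta_R|\le C/R$ together with Cauchy--Schwarz gives
\[
\Big|\int_{t_1}^{t_2}\!\!\int\nabla u\cdot\nabla\eta_R\Big|\le \frac{C}{R}\,\|\nabla u\|_{L^2(\R^d\times(t_1,t_2))}\,|B_{2R}\setminus B_R|^{1/2}\,(t_2-t_1)^{1/2},
\]
which grows like $R^{d/2-1}$ and is useless for $d\ge2$. I would instead integrate by parts in space, moving the derivative onto $\eta_R$:
\[
\int_{t_1}^{t_2}\!\!\int \nabla u\cdot\nabla\eta_R=-\int_{t_1}^{t_2}\!\!\int u\,\Delta\eta_R .
\]
Here $|\Delta\eta_R|\le C/R^2$, which yields the bound $CR^{-2}\int_{t_1}^{t_2}\|u(\cdot,t)\|_{L^1}\dif t$. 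Since $g\in L^1$ (because $\widetilde{\mathcal H}_0(0)<\infty$ and $h-\widetilde h\ge0$), \Cref{prop:L1-integrability} gives $\|u(\cdot,t)\|_{L^1}\le\|g\|_{L^1}$, so this term tends to $0$. Alternatively one can use subcaloricity of $u$ and comparison with the heat flow of $g$ (\Cref{lemma:comparison-heat}) to get the same $L^1$ control. The integration by parts is justified since $u(\cdot,t)\in H^1$ and $\eta_R$ is smooth with compact support.

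\textbf{Caveat.} If $\int g=\infty$ while $\int(g+h-\widetilde h)<\infty$, note that $h-\widetilde h\ge0$, so $g\in L^1$ automatically. Hence the $L^1$ bound above is always available, and the identity $\widetilde{\mathcal H}_0(t)=\widetilde{\mathcal H}_0(0)$ follows for all $t\ge0$.
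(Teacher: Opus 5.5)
Your proposal is correct and follows the paper's proof essentially step by step. Both arguments use radial cut-offs $\rho_R\nearrow1$, the nonnegativity of $u+\mu-\widetilde h$ from \Cref{prop:one-phase:increasing-enthalpy} with monotone convergence on the left-hand side, and integration by parts onto $\Delta\rho_R$ together with the $L^1$ decay of \Cref{prop:L1-integrability} on the flux term. The only differences are cosmetic: you use the explicit bound $CR^{-2}(t_2-t_1)\|g\|_{L^1}$ where the paper invokes dominated convergence, and you state explicitly that $g\in L^1$ follows from $\widetilde{\mathcal H}_0(0)<\infty$ and $h-\widetilde h\ge0$, which the paper uses without comment.
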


\begin{proof}[Proof of \Cref{lemma:energy-conservation}]
By \Cref{prop:one-phase:increasing-enthalpy}, for all $t>0$ it holds
\[
    \mu(x,t) \ge h(x) \ge \widetilde h(x) \qquad \text{for almost every}\quad x\in D.
\]
Therefore the enthalpy $\widetilde{\mathcal H}_0(t)$ is well defined for all $t\ge0$, although it may a priori be infinite.
In order to show that $\widetilde{\mathcal H}_0 (t)$ is well-posed and finite, we directly show that, for all $t>0$, it holds $\widetilde{\mathcal H}_0(t) = \widetilde{\mathcal H}_0(0)$.

Let us define $\rho_R : \R^d \to [0,1]$ as $\rho_R(x):= \psi (\sfrac{|x|}{R})$ where $\psi \in C^\infty_c(\R)$ is such that
\[
\psi(\zeta) = 1 
    \qquad\text{if}\quad \zeta\le \frac12, \qquad
    \psi(\zeta) = 0 \quad \text{if}\quad\zeta \ge1,
    \qquad\text{and}\qquad
    \|\psi'\|_{L^\infty(\R)} \le 4.
\]
Then, for all $R>0$ and $t>0$, it holds
\begin{equation}\label{e:identity-Rd}
    \int_{\R^d}\big( u + (\mu-\widetilde h)\big)\rho_R \dif x \bigg|_{\tau=0}^{t} 
        = -\int_{0}^t\int_{\R^d} \nabla u \cdot \nabla \rho_R \dif x\dif \tau
        = \int_{0}^t \int_{\R^d} u\,\Delta \rho_R \dif x \dif \tau,
\end{equation}
where the $\widetilde h$ terms cancel out since both $\rho_R$ and $\widetilde h$ are time-independent.
By \Cref{prop:L1-integrability}, we know that $u \in L^1(\R^d\times(0,T))$ for all $T\ge0$; therefore, since 
\[
    |\Delta \rho_R| \le \frac{C_d}{R^2} \ind_{B_{R}\setminus B_{\sfrac R2}} \xrightarrow[R\to+\infty]{}0 \qquad \text{pointwise in}\quad \R^d,
\]
the Dominated Convergence Theorem applies and we get
\[
    \lim_{R\to+\infty}\int^t_0\int_{\R^d} u \,\Delta \rho_R \dif x \dif \tau =0.
\]

It remains to show that the integrals on the left-hand side of \eqref{e:identity-Rd} converge to
$(\widetilde{\mathcal H}_0(t) - \widetilde{\mathcal H}_0(0))$.
However, $\rho_R \nearrow 1$ and for all $t\ge0$, it holds
\[
    \big(u(x,t) + (\mu(x,t) - \widetilde h(x) \big) \ge0 \qquad \text{for almost every}\quad x\in D,
\]
thus the Monotone Convergence Theorem implies that 
\[
    \widetilde{\mathcal H}_0(t) = \lim_{R\to+\infty} \int_{\R^d}\big( u(x,t) + (\mu(x,t)-\widetilde h(x))\big)\rho_R \dif x.
\]
Finally, since $\widetilde{\mathcal H}_0(0) < \infty$, we get
\[
    \widetilde{\mathcal H}_0(t) - \widetilde{\mathcal H}_0(0) = \lim_{R\to+\infty} \int_{D} \big(u + (\mu-\widetilde h)\big)\rho_R \dif x\bigg|_{\tau=0}^t = \lim_{R\to+\infty} \int^t_0\int_{\R^d} u \,\Delta \rho_R \dif x \dif \tau =0,
\]
and so we conclude the proof.
\end{proof}

We can now prove the statement in the general case.

\begin{proof}[Proof of the Monotonicity in \Cref{thm:total-monotonicity-formula}]
    By \eqref{e:compatibility_enthalpy-phases}, for all $t\ge0$ and for almost every $x \in D$, it holds
    \[
     u(x,t) +\mu(x,t)-h(x) >0, 
    \]
    if and only if
    \[
        u(x,t) >0 \qquad \text{or}\qquad u(x,t)=0 \qquad   \text{and}\qquad \mu(x,t)>h(x).
    \]
    This occurs because, outside from the zero set, the mushy coefficient is completely determined by the temperature.
    Therefore, in what follows, we will prove \Cref{thm:total-monotonicity-formula} by showing that the quantities $\mathcal H^+(t)$ and $\mathcal H^-(t)$, defined as
    \[
        \mathcal H^+(t) := \int_{D(t)} \big(u^+ + (\mu-h)^+ \big) \dif x
            \qquad\text{and}\qquad
        \mathcal H^-(t) := \int_{D(t)} \big(u^- + (\mu-h)^- \big) \dif x,
    \]
    are finite and (respectively) monotonically decreasing and increasing in time.
    We  notice that the monotonicity of $\mathcal H^+$ and $\mathcal H^-$ implies the monotonicity of $\mathcal H$ since (recall that, within our convention,  $u^-\le0$),
    \[
        \mathcal H(t) = \mathcal H^+(t) - \mathcal H^-(t).
    \]

    First, we prove that $\mathcal H^+(t)$ is well defined and finite for all $t>0$.
    Let $(\overline u,\overline \mu)$ be the one-phase solution in $\R^d\times[0,+\infty)$ with initial temperature $\overline g (x)
    := g^+(x)$ and initial mushy coefficient $\overline h(x) := h(x)$.
    By the Comparison Principle, for all $t>0$, they hold
    \[
        u(x,t) \le \overline u(x,t),
            \qquad
        h(x) \le \overline \mu(x,t)
            \qquad \text{and}\qquad 
        \mu(x,t) \le \overline \mu(x,t) \qquad\text{for almost every}\quad x \in D,
    \]
    and therefore, \Cref{lemma:energy-conservation} implies that
    \[
        \int_{D(t)}\big( \overline{u} + (\overline \mu - h) \big) \dif x = \mathcal H_0(\overline u,\overline\mu,h;t) 
        = \mathcal H_0(\overline u,\overline\mu,h;0)
        = \| g^+\|_{L^1(D)}
        = \mathcal H^+(0).
    \]
    Finally, by the Monotone Convergence Theorem, for $\rho_R$ defined as in the proof of \Cref{lemma:energy-conservation}, we get
    \begin{align*}
    \mathcal H^+(t) 
        = \lim_{R\to+\infty} \int_{D}\big(u^+ + (\mu - h)^+\big)\rho_R\dif x 
        \le \lim_{R\to+\infty} \int_{\R^d}\big(\overline u + (\overline \mu - h)\big)\rho_R\dif x 
        = {\mathcal H}(\overline u,\overline \mu, h;t) 
        = \mathcal H^+(0).
    \end{align*}
    
   Similarly, we define $(\underline u,\underline \mu)$ as the solution in $\R^d$ with initial temperature $\underline g := g^-$ and initial mushy coefficient $\underline h := h$, and so we find that
    \[ 
        \mathcal H^-(t) \ge \mathcal H^-(0),
    \]
    and thus, for all $t\ge0$, $\mathcal H^+(t)$ and $\mathcal H^-(t)$ are finite and it holds
    \[
        \mathcal H(t)=\mathcal H^+(t) - \mathcal H^-(t)\le \mathcal H^+(0) - \mathcal H^-(0) = \mathcal  H(0) <+\infty.
    \]

    For general $0\le t_1 < t_2$ we argue in the same way. Let us take $(\overline u,\overline \mu)$, the enthalpy solution in $\R^{d+1}_+$ with initial temperature $\overline g$ and initial mushy coefficient $\overline{h}$ defined as follows
    \[
        \overline g (x) = u(x,t_1)
            \qquad\text{and}\qquad
        \overline h(x) = h(x) \vee \mu(x,t_1).
    \]
    As in the previous case, by the Comparison Principle it follows that
    \[
        u(x,t) \le \overline u(x,t-t_1), \qquad
        h(x) \le \overline\mu(x,t-t_1) \qquad\text{and}\qquad
        \mu(x,t) \le \overline \mu(x,t-t_1),
    \]
    and so we conclude that for all $t_2 > t_1$ it holds
    \[  
        \mathcal H^+(t_2) \le \widetilde{\mathcal H}_0 (\overline u,\overline\mu,h;t_2-t_1) 
        = \widetilde{ \mathcal H}_0 (\overline u,\overline\mu,h;0) = \mathcal H^+(t_1).
    \]
    The same reasoning applies to $\mathcal H^-(t)$, and thus, by the same steps (taking now $\underline h = h \wedge \mu(\cdot,t_1)$), we prove that, for all $t_2>t_1$, it holds
    \[
        \mathcal H^-(t_2) \ge \mathcal H^-(t_1),
    \]
    and thus
    \[
        \mathcal H(t_2)=\mathcal H^+(t_2) - \mathcal H^-(t_2)\le \mathcal H^+(t_1) - \mathcal H^-(t_1)= \mathcal H(t_1),
    \]
    which concludes the proof of part one of \Cref{thm:total-monotonicity-formula}.
\end{proof}

Once the integrability of $\mathcal H(t)$ is assured, the proof of \Cref{thm:energy-conservation-law} follows as \Cref{lemma:energy-conservation}.

\begin{proof}[Proof of \Cref{thm:energy-conservation-law}]
    By \Cref{thm:total-monotonicity-formula}, $\mathcal H^+(t)$ and $\mathcal H^-(t)$ are both well-defined and finite, therefore for all $t\ge0$ it holds
    \[
        \mathcal H_0(t) = \lim_{R\to+\infty} \int_{D(t)} \big( u(x,t) + (\mu(x,t) - h(x)) \big)\rho_R \dif x,
    \]
    where $\rho_R$ is the same function defined in the proof of \Cref{lemma:energy-conservation}. 
    Then, by the Dominated Convergence Theorem it follows that
    \[
    \mathcal H_0(t) - \mathcal H_0(0) = 0 \qquad\text{for all}\quad t\ge0,
    \]
    and so we conclude the proof.
\end{proof}

\subsection{Continuity of the Total Enthalpy}\label{subsec:mono:continuity}

In this section we complete the proof of \Cref{thm:total-monotonicity-formula} showing that $u$ and $\mu$ are separately $L^1(D)$-strongly continuous.

\begin{lemma}\label{lemma:mono:continuity}
    Let $D\subset \R^d$ be an open set. Let $g \in H^1_0(D)\cap L^1(D)$ and $h \in L^1(D)$ which satisfies $|h(x)|\le 1$. 
    Let $(u,\mu)$ be the corresponding enthalpy solution in $D_\infty$ with initial temperature $g$, homogeneous Dirichlet boundary conditions, and initial mushy coefficient $h$.
    Then $u(\cdot,t)$ and $\mu(\cdot,t)$ are (separately) $L^1(D)$-strongly continuous, that is, for all $t_0\ge0$ it holds
    \[
        \lim_{t\to t_0} \| u(\cdot,t) - u(\cdot,t_0) \|_{L^1(D)} + 
        \|\mu(\cdot,t) - \mu(\cdot,t_0) \|_{L^1(D)} =0.
    \]
\end{lemma}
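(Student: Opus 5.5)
The plan is to prove the continuity of $u$ and of $\mu$ separately, using the weak-$\ast$ continuity of $\mu$ from \Cref{def:enthalpy_solution}~\ref{item:as:continuity}, the monotonicity already proved, and the sign structure coming from \eqref{e:compatibility_enthalpy-phases}. The first step is the temperature. Since $u\in L^2_{loc}([0,\infty);H^1(D))$ with $\de_t u\in L^2_{loc}L^2$, the map $t\mapsto u(\cdot,t)$ is continuous into $L^2(D)$, hence into $L^1_{loc}(D)$. To pass to $L^1(D)$ I would control the tails uniformly: by comparison (\Cref{lemma:comparison-heat}, \Cref{prop:subcaloric}) we have $u^+\le v^+$ and $u^-\ge -v^-$, where $v^\pm$ are the caloric functions in $\R^d$ with data $g^\pm$. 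Since $t\mapsto v^\pm(\cdot,t)$ is $L^1(\R^d)$-continuous, the family $\{u(\cdot,t)\}_{t\in[t_0-1,t_0+1]}$ is uniformly integrable at infinity. Then $\|u(t)-u(t_0)\|_{L^1(D)}\le \|u(t)-u(t_0)\|_{L^1(B_R)}+2\sup_{\tau}\|v(\tau)\|_{L^1(\R^d\setminus B_R)}$, and we conclude by letting $t\to t_0$ and then $R\to\infty$.

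The second step is the mushy coefficient, which is the delicate part, since weak-$\ast$ convergence alone does not give strong convergence. I would use the splitting from the proof of the monotonicity in \Cref{thm:total-monotonicity-formula}. Write $\mu-h=(\mu-h)^+ +(\mu-h)^-$. The strategy is to show that the norms $\|(\mu(t)-h)^\pm\|_{L^1}$ are continuous in $t$ and then combine this with weak convergence. For the norms, the one-phase energy conservation (\Cref{lemma:energy-conservation}) together with the comparison argument used for $\mathcal H^\pm$ should give $\mathcal H^+(t)=\|u^+(t)\|_{L^1}+\|(\mu(t)-h)^+\|_{L^1}$. Ideally one would show that $\mathcal H^+$ is continuous, for instance via the identity $\mathcal H_0(t)=\int g$ (the proof of \Cref{thm:energy-conservation-law} adapted to $D$ using cutoffs $\rho_R$ and Dirichlet data) combined with lower semicontinuity.

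Concretely, weak-$\ast$ convergence gives $\liminf_{t\to t_0}\|(\mu(t)-h)^\pm\|_{L^1(K)}\ge\|(\mu(t_0)-h)^\pm\|_{L^1(K)}$ on compacts $K$, and hence on $D$. The upper bound comes from monotonicity: for $t>t_0$, $\mathcal H^+(t)\le\mathcal H^+(t_0)$. Since $\|u^+(t)\|_{L^1}$ is continuous by the first step, we get $\limsup_{t\downarrow t_0}\|(\mu(t)-h)^+\|_{L^1}\le \|(\mu(t_0)-h)^+\|_{L^1}$. For $t\uparrow t_0$ I would apply the same comparison restarted at time $t$, using the strong continuity of $u$ and the weak continuity of $\mu$ to pass to the limit in the energy identity $\mathcal H_0$. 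The identity $\int(u+\mu-h)(t)=\int g$ for all $t$, together with the separate monotonicities of $\mathcal H^+$ (nonincreasing) and $-\mathcal H^-$ (nonincreasing), should force both to be continuous: their sum $\mathcal H^++\mathcal H^-=\int g$ is constant, so a jump in one must be compensated by the other in the opposite direction. But the two are monotone in the same sense, so no jumps can occur. This is the key mechanism, and it gives continuity of the norms $\|(\mu-h)^\pm\|_{L^1}$.

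Finally, weak convergence of $(\mu(t)-h)^\pm$ (obtained from weak convergence of $\mu$ restricted to the sets where the sign is fixed, which I expect to need the one-phase comparisons $\overline\mu,\underline\mu$ as bounds) plus convergence of $L^1$ norms of functions with a fixed sign yields strong $L^1$ convergence, via the Radon–Riesz property in $L^1$ for nonnegative functions with pointwise bounds by $2$ and tails controlled. The main obstacle is this last part: weak convergence of $\mu$ does not directly give weak convergence of the positive parts. I would first try instead to establish $\|\mu(t)-\mu(t_0)\|_{L^1}\to0$ by writing $|\mu(t)-\mu(t_0)|$ and using the one-sided monotonicity on the sets $\{u(t_0)>0\}$ and $\{u(t_0)<0\}$, where $\mu(t_0)=\pm1$ is extremal, so weak convergence to an extremal value implies strong convergence. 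On $\{u(t_0)=0\}$ I would exploit the constant total $\int(u+\mu-h)$ together with comparison against $\overline\mu,\underline\mu$ restarted at $t_0$.
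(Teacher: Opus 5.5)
Your treatment of the temperature is fine. It differs slightly from the paper's: you use $u\in C([0,T];L^2(D))$ (from $\de_t u\in L^2_{loc}([0,T);L^2(D))$) plus uniform tail control by the heat flows of $g^\pm$, whereas the paper takes an a.e.-convergent sequence and applies Fatou with the caloric majorant $K_{t-\underline t}\ast|u(\cdot,\underline t)|$. Both work.

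The gap is in the mushy coefficient, where three of your steps fail.

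\textbf{The no-jump mechanism.} This step contains a sign error. Suppose that at $t_0$ both $\mathcal H^+$ and $-\mathcal H^-$ drop by the same $\delta>0$. Then both remain nonincreasing and $\mathcal H^++\mathcal H^-$ is unchanged, so the constraint excludes nothing. This simultaneous loss of positive and negative enthalpy is exactly the ``compensating cancellation'' the lemma must rule out. Moreover, constancy of $\mathcal H_0$ is only available for $D=\R^d$ (\Cref{thm:energy-conservation-law}). With Dirichlet data on a general $D$, heat leaves through $\de D$. For instance, if $D$ is bounded and $g>0$ in $D$, then $\mu\equiv h\equiv1$ and $\mathcal H_0(t)=\|u(\cdot,t)\|_{L^1}\to0$.

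\textbf{The Radon--Riesz step.} Even granting convergence of $\|(\mu(t)-h)^\pm\|_{L^1}$, weak convergence plus convergence of norms does not give strong convergence in $L^1$. The example $1+\sin(nx)\rightharpoonup1$ on $(0,2\pi)$ has equal norms, yet $\|\sin(nx)\|_{L^1}=4$. For nonnegative functions with controlled tails, norm convergence already follows from weak convergence, so it carries no information.

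\textbf{The fallback.} Extremality correctly handles $\{u(\cdot,t_0)\neq0\}$, and indeed all of $\{|\mu(\cdot,t_0)|=1\}$. On the mushy set $\{|\mu(\cdot,t_0)|<1\}$, however, which is the whole difficulty, you give no argument. Restarting comparisons at $t_0$ gives at best right continuity; the left limit needs an estimate uniform in the restart time.

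What is missing is either structural information on $\mu$ in the zero set or a quantitative estimate. The paper uses the former, via Berger--Rogers.
\begin{itemize}
\item The set $E(t)=\{|\mu(\cdot,t)|=1\}$ is nondecreasing and $\mu=h$ a.e.\ on $\mathcal M(t)$. Hence $\mu-h=(1-h)\ind_{A(t)}-(1+h)\ind_{B(t)}$ and $2\ind_{A(t)}=(\mu-h)+(1+h)\ind_{E(t)}$.
\item Therefore $\ind_{A(t)}\rightharpoonup\ind_{A(t_0)}$ weakly-$\ast$. A weak-$\ast$ limit of characteristic functions that is itself a characteristic function is a strong $L^2_{loc}$ limit, which gives a.e.\ convergence along subsequences.
\item The one-phase barriers $\underline\mu(\cdot,t_0+1)\le\mu(\cdot,t)\le\overline\mu(\cdot,t_0+1)$ then allow dominated convergence.
\end{itemize}

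A shorter repair works within your own framework. Apply the monotonicity part of \Cref{thm:total-monotonicity-formula} to the solution restarted at time $t_1$, with reference coefficient $\mu(\cdot,t_1)$; this is the same restart the paper uses in proving monotonicity. By \eqref{e:compatibility_enthalpy-phases}, $u(\cdot,t_2)$ and $\mu(\cdot,t_2)-\mu(\cdot,t_1)$ have the same sign a.e. Hence, for $t_1<t_2$,
\[
\|u(\cdot,t_2)\|_{L^1(D)}+\|\mu(\cdot,t_2)-\mu(\cdot,t_1)\|_{L^1(D)}\le\|u(\cdot,t_1)\|_{L^1(D)}.
\]
The $L^1$-continuity of $u$ from your first step then yields both one-sided limits at once.
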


\begin{remark}
\Cref{lemma:mono:continuity} implies the continuity of the total enthalpy $\mathcal H(t)$ and conclude the proof of \Cref{thm:total-monotonicity-formula}, since 
\begin{equation}\label{e:continutity-mu-with-h}
    \left\| \mu(\cdot,t) -  \mu(\cdot,t_0)\right\|_{L^1(D)} = 
    \left\| \big(\mu(\cdot,t) - h \big) -  \big( \mu(\cdot,t_0)-h\big) \right\|_{L^1(D)},
\end{equation}
and the $L^1(D)$-strong continuity in time implies that
\[
    \lim_{t\to t_0} \mathcal H(t) 
    = \lim_{t\to t_0} \int_{D(t)} \big|u + \big(\mu-h \big)\big| \dif x
    = \int_{D(t_0)} \big|u + \big(\mu-h \big)\big| \dif x
    = \mathcal H(t_0).
\]
\end{remark}

\begin{proof}[Proof of \Cref{lemma:mono:continuity}]
    We treat the continuity of the temperature and of the mushy coefficient separately.

    \paragraph{Continuity of the temperatures}
    Let $0\le t_0 < T $. Since $u \in H^1(D_T)$, there exists a sequence $\{t_i\}_{i\in\NN}$ of times such that $t_i\to t_0$ and for which
    \[
        u(x,t_i) \xrightarrow[i\to+\infty]{} u(x,t_0) \qquad\text{for almost every}\quad x\in D.
    \]
    Let us then define $\underline t := \inf\{t_i: i\in\NN\}$, $v_0(x) := |u(x,\underline t)|$, and the function $v : \R^d \times[\underline t,+\infty) \to \R$ as
    \[
        v(x,t) := (K_{t-\underline t} \ast v_0)(x),
    \]
    where $K_{t-\underline t}$ is the usual heat kernel in $\R^d$ at time $(t-\underline t)\ge0$.
    Since $u^+$ is subcaloric and $u^-$ is supercaloric, for all $t\ge\underline t$ there exists $N_t\subset D$, $|D\setminus N_t|=0$ such that
    \[  
        |u(x,t)| \le v(x,t) \qquad\text{for all}\quad x \in N_t.
    \]
    Since $v \in C^0([\underline t,+\infty);L^1(\R^d))$ (see, for instance, \cite{Brezis2011:FA}), $v(\cdot,t_i)$ converge to $v(\cdot,t_0)$ strongly in $L^1(\R^d)$ and, up to subsequences, 
    \[
        v(x,t_i) \xrightarrow[i\to+\infty]{} v(x,t) \qquad\text{for almost every}\quad x \in \R^d.
    \]
    Thus, by Fatou's Lemma applied to $(v(\cdot,t_i) + v(\cdot,t_0) - |u(\cdot,t_i)-u(\cdot,t_0)|)$ it follows that
    \[  
        \int_D 2 v(x,t_0)  \dif x \le \liminf_{i\to+\infty} \int_D v(x,t_i) + v(x,t_0) - |u(x,t_i)- u(x,t_0)| \dif x,
    \]
    and thus, since $v(\cdot,t)$ is $L^1$-strongly continuous, we get
    \[ 
        \limsup_{i\to+\infty} \int_D |u(x,t_0)- u(x,t_i)| \dif x
        \le 0.
    \]
    To conclude, we observe that the limit is independent from the subsequence choice; therefore we get that
    \[
        \lim_{t\to t_0} \| u(\cdot,t) - u(\cdot,t_0) \|_{L^1(D)} =0,
    \]
    without passing to subsequences, and thus the temperature is $L^1(D)$-strongly continuous.
    \qed

\paragraph{Continuity of the mushy coefficient}
We show the continuity in time for $(\mu(\cdot,t) -h) \in L^1(D)$, as it is an equivalent property due to \eqref{e:continutity-mu-with-h}.
First we treat the one-phase case, then we use the result for one-phase solutions together with the results in \cite{RogersBerger1984:MonotonicityMushyRegionTwoPhaseStefan} to conclude the proof in the two-phase case.\medskip

\noindent{\it Continuity of the mushy coefficient in the one-phase case.} Let $(u,\mu)$ be a one-phase solution.
By \Cref{prop:one-phase:increasing-enthalpy}, the mushy coefficient $\mu$ is monotone non-decreasing in time and $(\mu(x,t) - h(x))\ge0$ for almost every $x\in D$; therefore, for all $t_0\ge0$, there exist left and right limits, such that
\begin{equation}\label{e:mu-continuity-left-right}
    0 \le \lim_{t\nearrow t_0} \mu(x,t)- h(x) \le \mu(x,t_0) - h(x) \le \lim_{t\searrow t_0} \mu(x,t) - h(x) \qquad\text{for almost every}\quad x\in D.
\end{equation}
Moreover, as the mushy coefficient is non-decreasing, up to subsequences $(\mu(x,t_0+1) + h(x))$ dominates both sequences, and therefore these limits are $L^1(D)$-strong. As the mushy coefficient $\mu(\cdot,t)$ is weak-$\ast$ continuous in $L^\infty(D)$ in time (see \Cref{def:enthalpy_solution}), we get 
\begin{equation}\label{e:mono:limit-mushy-one-phase}
    \lim_{t\nearrow t_0} \mu(x,t)- h(x) 
        = \mu(x,t_0) - h(x) = \lim_{t\searrow t_0} \mu(x,t) - h(x).
\end{equation}
As the limits do not depend on the subsequence, we recover the full convergence as $t\to t_0$. This completes the proof of \Cref{lemma:mono:continuity} in the one-phase case.

\begin{remark}
    The inequalities \eqref{e:mu-continuity-left-right} follow from \Cref{prop:one-phase:increasing-enthalpy}.
    Such inequalities hold for all times $t_1 < t_2$, but only outside a set $N=N(t_1,t_2)\subset D$, with $|D\setminus N|=0$. 
    As the family of sets $N$ depends on the choice of times, the Dominated Convergence Theorem and yet \eqref{e:mono:limit-mushy-one-phase} holds only up to subsequences. The argument then conclude thanks to the full weak-$\ast$ continuity of $\mu$ in time.
\end{remark}

\noindent{\it Continuity of the mushy coefficient in the two-phase case.} Let $(u,\mu)$ be a two-phase solution and let us fix $t_0\ge0$. In order to prove the continuity of $\mu(\cdot,t)$ at $t_0\ge0$, we need to understand the interaction between the positive and negative phases.
For all $t\ge0$, we define the sets $A(t)$, $B(t)$ as
\[
    A(t) := \{ x \in D : \mu(x,t) = 1\} \qquad\text{and}\qquad B(t) := \{ x \in D: \mu(x,t) = -1\},
\]
and the set $E(t)\subset D$ as
\[
    E(t) := A(t) \cup B(t) = D \setminus \mathcal M(t).
\]
Berger and Rogers' results \cite{RogersBerger1984:MonotonicityMushyRegionTwoPhaseStefan} imply that $E(t)$ is monotonically non-decreasing, namely,
\[
    |E(t_1) \setminus E(t_2)| =0 \qquad\text{for all}\quad 0\le t_1 < t_2,
\]
and the mushy coefficient is constant in $\mathcal M(t)$, that is,
\[
    \mu(x,t) = h(x) \qquad\text{for almost every}\quad x \in \mathcal M(t) = D \setminus E(t).
\]
Therefore, we can rewrite $\mu(x,t) - h(x)$ as follows
\begin{equation}\label{e:mushy-decomposition}
    \begin{split}
        \mu(x,t) - h(x) 
            &= \ind_{A(t)}(x) - \ind_{B(t)}(x) - h(x) \ind_{E(t)}\\
            &= \big(1-h(x) \big) \ind_{A(t)}(x)
                -\big(1+h(x)\big) \ind_{B(t)}(x).
    \end{split}
\end{equation}
Through standard algebraic manipulations, for all $t\ge0$ the following identities hold
    \begin{align*}
        2 \ind_{A(t)} &= (1-h) \ind_{A(t)} - (1+h)\ind_{B(t)} + (1+h) \ind_{E(t)} = (\mu-h) + (1+h) \ind_{E(t)},\\
        \intertext{and}
        2 \ind_{B(t) } &= (1+h)\ind_{B(t)} - (1-h) \ind_{A(t)} + (1+h) \ind_{E(t)} = (h-\mu) + (1-h) \ind_{E(t)}.
    \end{align*}
As in the one-phase case, the monotonicity of $E(t)$ implies that $(1\pm h)\ind_{E(t)}$ is strongly continuous in $L^1(D)$ in time.
Therefore, the linearity of the weak-$\ast$ convergence implies that
\[
    \ind_{A(t)} \xrightharpoonup[t \to t_0]{} \ind_{A(t_0)} 
        \qquad \text{and}\qquad
    \ind_{B(t)} \xrightharpoonup[t\to t_0]{} \ind_{B(t_0)}\qquad \text{weakly-$\ast$ in}\quad L^\infty(D) \qquad\text{for all}\quad t_0\ge0.
\]
Thus, the weak-$\ast$ limits of these sequences of characteristic functions are characteristic functions. This implies that, for all $R>0$, they converge strongly in $L^2(D\cap B_R)$, and (up to subsequences) almost everywhere in $D$. 
The same holds for $(1-h)\ind_{A(t)}$ or $(1+h)\ind_{B(t)}$, as $h$ does not depend on time.
Therefore, up to subsequences, for all $t_0\ge0$, it holds
\[
    \mu(x,t) - h(x) \xrightarrow[t\to t_0]{} \mu(x,t_0) - h(x) \qquad\text{for almost every}\quad x\in D.
\]

To show that the convergence is actually $L^1(D)$-strong, we need uniform barriers. Once again we introduce the pairs $(\overline u, \overline \mu)$ and $(\underline u, \underline \mu)$ as the enthalpy solutions in $\R^{d+1}_+$ with the initial data $(g^+,h)$ and $(g^-,h)$, respectively.
Then, the comparison principle and \Cref{prop:one-phase:increasing-enthalpy} imply that, for all $t \le t_0+1$, the following holds
\[
\underline \mu(x,t_0+1) \,\le\, \underline \mu(x,t)\, \le\, \mu(x,t) \le\, \overline{\mu}(x,t) \,\le\, \overline{\mu}(x,t_0+1) \qquad\text{for almost every}\quad x \in D,
\]
and so, since $(\overline \mu(\cdot,t)-h) \in L^1(D)$ and $(\underline \mu(\cdot,t)-h) \in L^1(D)$ for all $t\ge0$, the Dominated Convergence Theorem implies that
\[
    \lim_{t\to t_0} \left\|  \mu(\cdot,t) - \mu(\cdot,t_0) \right\|_{L^1(D)} 
    = \lim_{t\to t_0}
        \left\| \big( \mu(\cdot,t) - h \big) - \big( \mu(\cdot,t_0) - h \big) \right\|_{L^1(D)} 
    = 0,
\]
and thus we conclude the proof of \Cref{lemma:mono:continuity} also in the general two-phase case.
\end{proof}

\section{Asymptotic behavior of solutions with finite initial enthalpy}\label{sec:asymptotic-analysis}

In this section we prove \Cref{thm:finite-melting}, so in all this section we always assume that there exists $\ell \in (0,1)$ such that the initial mushy coefficient satisfies
\begin{equation}\label{e:phase-separation-section}
    |h(x)|\le \ell <1 \qquad\text{for all} \quad D\setminus \big(\Omega_g^+ \cup \Omega_g^-\big),
\end{equation}
where $g$ is the initial temperature. 
Through this section, whenever it leads to no confusion, we will often write $|h(x)|\le \ell<1$ in $D$ instead of $D\setminus (\Omega_g^+ \cup \Omega_g^-)$.

The first result we need in our analysis is the measure estimate we already presented in the introduction. 
We notice that estimates in the spirit of \eqref{e:measure-estimate-intro} can be found also in \cite{DiazFasanoMeirmanov90:MushyEvolution} in the context of the one-phase problem.

\begin{lemma}[Measure bounds]\label{lemma:measure-est}
    Let $D\subset\R^d$ be an open set and $\ell\in(0,1)$. Let $g\in H^1_0(D)\cap L^1(D)$ and let $h \in L^\infty(D)$ satisfies \eqref{e:phase-separation-section}.
    Let the pair $(u,\mu)$ be the enthalpy solution of the Stefan problem in $D_\infty = D \times[0,+\infty)$ with initial temperature $g$, homogeneous Dirichlet boundary conditions, and with initial enthalpy coefficient $h$.
    Then they hold the following measure estimates
    \begin{equation}\label{e:measure-est-gen-h}
    \left\vert\Omega_u^+(t)\setminus \Omega_g^+  \right\vert +     \left\vert\Omega_u^-(t)\setminus \Omega_g^-  \right\vert 
    \le 
        \frac{1}{1-\ell}
        \| g\|_{L^1(D)},
    \end{equation}
    and
    \begin{equation}\label{e:measure-est-mixing}
        \left\vert \Omega_u^+(t) \cap \Omega_g^-\right\vert + 
    \left\vert \Omega_u^-(t) \cap \Omega_g^+\right\vert \le \frac{1}{2} \| g\|_{L^1(D)}.
    \end{equation}
\end{lemma}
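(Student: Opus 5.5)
The plan is to read both estimates off the monotonicity of the total enthalpy in \Cref{thm:total-monotonicity-formula}, $\mathcal H(t)\le\mathcal H(0)$, combined with a pointwise lower bound for the integrand $|u+(\mu-h)|$ on each of the sets appearing in \eqref{e:measure-est-gen-h} and \eqref{e:measure-est-mixing}.

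\emph{Step 1: the value at $t=0$.} Since $h=\mu(\cdot,0)$, we have $\mathcal H(0)=\int_D|g|\dif x=\|g\|_{L^1(D)}$. By \Cref{thm:total-monotonicity-formula}, $\mathcal H(t)\le\|g\|_{L^1(D)}$ for every $t\ge0$.

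\emph{Step 2: the initial phases.} Since $(u,\mu)$ is an enthalpy solution, the compatibility condition \eqref{e:compatibility_enthalpy-phases} holds at $t=0$. Hence $h=1$ a.e.\ on $\Omega_g^+$ and $h=-1$ a.e.\ on $\Omega_g^-$. On the zero set $\{g=0\}$ we use \eqref{e:phase-separation-section}, which gives $|h|\le\ell$ there.

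\emph{Step 3: pointwise lower bounds at time $t$.} We fix the full-measure set $N_t$ given by \eqref{e:compatibility_enthalpy-phases} at time $t$, intersected with a full-measure set on which the description of $h$ in Step 2 holds.
\begin{itemize}
\item Let $x\in\Omega_u^+(t)\setminus\Omega_g^+$. Then $u(x,t)>0$ and $\mu(x,t)=1$. Either $g(x)=0$, in which case $\mu-h\ge1-\ell$, or $g(x)<0$, in which case $\mu-h=2\ge1-\ell$. Since $u>0$ as well, $|u+\mu-h|\ge1-\ell$.
\item Symmetrically, on $\Omega_u^-(t)\setminus\Omega_g^-$ we have $u<0$ and $\mu=-1$, so $u+\mu-h\le-(1-\ell)$.
\item The two sets above are disjoint, since $\Omega_u^+(t)\cap\Omega_u^-(t)=\emptyset$.
\end{itemize}
Integrating over their union and using Step 1 gives
\[
(1-\ell)\Big(|\Omega_u^+(t)\setminus\Omega_g^+|+|\Omega_u^-(t)\setminus\Omega_g^-|\Big)\le\mathcal H(t)\le\|g\|_{L^1(D)},
\]
which is \eqref{e:measure-est-gen-h}.

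For \eqref{e:measure-est-mixing} we restrict to the disjoint sets $\Omega_u^+(t)\cap\Omega_g^-$ and $\Omega_u^-(t)\cap\Omega_g^+$.
\begin{itemize}
\item On the first set, $\mu-h=1-(-1)=2$ and $u>0$, so the integrand is at least $2$.
\item On the second set, $\mu-h=-2$ and $u<0$, so again $|u+\mu-h|\ge2$.
\end{itemize}
Hence $2\big(|\Omega_u^+(t)\cap\Omega_g^-|+|\Omega_u^-(t)\cap\Omega_g^+|\big)\le\mathcal H(t)\le\|g\|_{L^1(D)}$.

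\emph{Main obstacle.} There is no analytic difficulty once \Cref{thm:total-monotonicity-formula} is available. The only point needing care is bookkeeping with null sets. We must check that $h=\mu(\cdot,0)$ equals $\pm1$ a.e.\ on $\Omega_g^\pm$, which follows from compatibility at $t=0$. We must also check that the various a.e.\ statements, at the single fixed time $t$, hold simultaneously on one common full-measure set. Since only countably many sets are involved, this is routine.
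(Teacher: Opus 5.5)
Your proof is correct and follows essentially the paper's route. You combine pointwise lower bounds on each of the four sets with $\mathcal H(t)\le\mathcal H(0)=\|g\|_{L^1(D)}$ from \Cref{thm:total-monotonicity-formula}, using compatibility at $t=0$ to get $h=\pm1$ on $\Omega_g^\pm$. The paper states the bounds for $(\mu-h)^\pm$ rather than for $|u+\mu-h|$, which is equivalent here because $u$ and $\mu-h$ have the same sign on those sets.
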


\begin{proof}
    The Lemma is a consequence of \Cref{thm:total-monotonicity-formula}.
    By \eqref{e:compatibility_enthalpy-phases}, \eqref{e:mushy-decomposition}, and \eqref{e:phase-separation-section}, for almost every $x\in D$, they hold the following inequalities
    \[
        (1-\ell)\big( \ind_{\Omega_u^+(t)} - \ind_{\Omega_g^+}\big)^+ \le \big(\mu(\cdot,t)-h\big)^+ 
        \qquad \text{and}\qquad
        (1-\ell)\big( \ind_{\Omega_u^-(t)} - \ind_{\Omega_g^-}\big)^- \ge \big(\mu(\cdot,t)-h\big)^-.
    \]
    Thus, \eqref{e:measure-est-gen-h} follows integrating these inequalities over $D$.
    The inequality \eqref{e:measure-est-mixing} follows by analogous considerations. We observe that, for all $t\ge0$ and for almost every $x\in D$, they hold
    \[
        2 \ind_{\Omega_u^+(t) \cap \Omega_g^-} \le \big(\mu(\cdot,t)-h\big)^+ 
            \qquad \text{and}\qquad
        2 \ind_{\Omega_u^-(t) \cap \Omega_g^+} \ge \big(\mu(\cdot,t)-h\big)^-,
    \]
    and thus we conclude the proof by integrating the latter inequality over $D$.
\end{proof}

As we pointed out in the introduction, the latter is not sufficient to conclude, as the sole measure estimate does not give information about where the phases are expanding.
To provide such information, we need some density estimates, that are the content of the next lemma.
As we already discussed, such lemma holds for one-phase solutions.

\begin{lemma}[Fixed scale density estimates]\label{lemma:nofiltration}
    Let $D\subset\R^d$ be an open set and $\ell\in (0,1)$.
    Let $(u,\mu)$ be a one-phase enthalpy solution with homogeneous Dirichlet boundary condition, initial temperature $g\in H^1_0(\R^d)\cap L^1(\R^d)$, and initial mushy coefficient $h \in L^\infty(\R^d)$ that satisfies \eqref{e:phase-separation-section}.
    There exists exists a monotone increasing function 
    \[
        c_d : (0,+\infty) \longrightarrow (0,+\infty),
    \]
    depending only on the dimension $d$ and continuously on the $L^1$-norm of $g$ with the following property. 
    Let $R_0>0$ be a positive constant and assume that $\Omega_g^+$ satisfies the two-sided ball condition with radius $R_0$.
    Then, for all $r_0 \in (0,R_0/10)$ and for all $(x_0,t_0) \in \Gamma^+$  such that
    \begin{equation}\label{e:asymptotic-first-time}
        x_0 \in \Gamma_u^+(t_0) \cap \de (\Omega_g^+ + B_{r_0}) \qquad\text{and}\qquad
        t_0 := \min\Big\{ t>0 : \Gamma^+_u(t) \cap \de (\Omega_g^+ + B_{r_0}) \ne\emptyset \Big\},
    \end{equation}
    the following density estimate holds
    \begin{equation}\label{e:nofiltration-small}
        \big|\Omega_u^+(t_0) \cap B_{r_0}(x_0)\big| \ge c_d(r_0) (1-\ell)^{\sfrac d2}.
    \end{equation}
\end{lemma}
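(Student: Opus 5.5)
The plan is to exploit the extremality of $(x_0,t_0)$: before time $t_0$ the positive phase is confined to $\Omega_g^+ + B_{r_0}$. By \Cref{prop:one-phase:positivity-increasing} it is nondecreasing in time, so $\Omega_u^+(t)\subset \overline{\Omega_g^+ + B_{r_0}}$ for all $t\le t_0$. Let $y_0\in\de\Omega_g^+$ be the point with $|x_0-y_0|=r_0$, and let $\nu$ be the outer normal at $y_0$. The two-sided ball condition with radius $R_0>10r_0$ makes $\de\Omega_g^+$, and hence $\de(\Omega_g^+ + B_{r_0})$, uniformly flat in $B_{2r_0}(y_0)$ at scale $r_0$. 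Each is trapped between two tangent balls of radius $\gtrsim R_0$ and is therefore a Lipschitz (indeed $C^{1,1}$) graph in direction $\nu$ with small constant.

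The idea is to compare the solution with two regular barriers in the cylinder
\[
\mathcal C := \big(B_{2r_0}(y_0)\cap(\Omega_g^+ + B_{r_0})\big)\times(t_0-r_0^2,t_0).
\]
In $\mathcal C$ the function $u^+$ is caloric in $\Omega_u^+$ and vanishes on $\Gamma_u^+$. Outside, in the region $\{u=0\}$, the enthalpy obeys $\mu\ge h$ with a jump of at least $1-\ell$ across the free boundary, so the normal speed is at most $|\nabla u|/(1-\ell)$.

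The steps are as follows.
\begin{enumerate}[(1)]
\item Obtain $L^\infty$ control: $u(\cdot,t)\le \|K_{t}\ast g\|_\infty\le C t^{-d/2}\|g\|_{L^1}$ by \Cref{lemma:comparison-heat}. Combined with finite speed of propagation, this gives an upper bound on $u$ in $\mathcal C$ depending only on $d$, $r_0$ and $\|g\|_{L^1}$. This is the source of the dependence of $c_d$ on $\|g\|_{L^1}$.
\item Argue from below. Since $x_0\in\Gamma^+_u(t_0)$ has travelled a distance $r_0$ in time $t_0$, the free boundary must have moved at average speed comparable to $r_0/t_0$ through the strip. Using the measure bound \eqref{e:measure-est-gen-h} and the speed law, the flux $\int|\nabla u^+|$ through the strip near $y_0$ in the direction $\nu$ is bounded below by $(1-\ell)\cdot(\text{swept volume})$.
\item Apply the parabolic boundary Harnack inequality (in the spirit of \cite{Athanasopoulos_Caffarelli_Salsa_1996a}) to $u^+$ in the Lipschitz domain $\Omega_u^+\cap\mathcal C$. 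Its boundary is trapped between the flat surfaces $\de\Omega_g^+$ and $\de(\Omega_g^+ + B_{r_0})$. The aim is to show that $u^+$ is comparable, at all points at the same height in direction $\nu$, to its value at a reference interior point $A_{r_0}$. This yields nondegeneracy $u^+\gtrsim c(r_0,\|g\|_{L^1})\,\mathrm{dist}(\cdot,\Gamma_u^+)$ in a neighbourhood of $x_0$.
\item Build a subsolution barrier: a classical one-phase subsolution in the sense of \Cref{def:classical-sol-op}, with speed coefficient $1/(1-\ell)$, a flat front and a caloric profile placed below $u$ at time $t_0-\tau$. By \Cref{prop:classical-are-weak} and \Cref{lemma:comparison-stefan}, this forces $\Omega_u^+(t_0)$ to contain a cone-like region, or a ball of radius $\gtrsim (1-\ell)^{1/2}c(r_0)$, inside $B_{r_0}(x_0)$. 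The factor $(1-\ell)^{d/2}$ arises from the parabolic scaling of the barrier: the front moves at speed $\sim|\nabla u|/(1-\ell)$, and the ball radius scales like $\sqrt{1-\ell}$.
\end{enumerate}

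I expect the main obstacle to be step (3). Boundary Harnack needs a priori Lipschitz regularity of $\Gamma_u^+$ in space-time, which is not known for weak solutions. My first attempt would be to use the extremality of $t_0$ to sandwich $\Gamma^+_u$ between two flat graphs. I would then apply a version of boundary Harnack valid in the flat cylinder with $u^+$ merely subcaloric (\Cref{prop:subcaloric}) and extended by zero, comparing with the caloric function in the larger domain $(\Omega_g^+ + B_{r_0})\cap B_{2r_0}(y_0)$. Monotonicity of $c_d$ in $r_0$ should then follow by tracking the scaling of the constants.
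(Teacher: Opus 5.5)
\textbf{There is a genuine gap, and it extends beyond step (3).} The non-degeneracy of $u$ that your argument relies on is unavailable, and nothing in the proposal converts ``$\Gamma_u^+$ reached $x_0$'' into a lower bound on how far a lower barrier advances.

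\begin{enumerate}[(a)]
\item \emph{Step (3).} Being trapped between $\de\Omega_g^+$ and $\de(\Omega_g^+ + B_{r_0})$, two flat surfaces at distance $r_0$, gives no regularity of $\Gamma_u^+$ below scale $r_0$. A priori, $\Omega_u^+\cap\mathcal C$ may be a thin finger reaching $x_0$, which is exactly the scenario the lemma must exclude. So boundary Harnack is not available in $\Omega_u^+\cap\mathcal C$.
\item \emph{Your workaround.} Comparing the zero extension of $u^+$ (only subcaloric) with the caloric function of the larger flat domain gives upper bounds $u^+\le W$ only. No such comparison yields $u^+\gtrsim \distop(\cdot,\Gamma_u^+)$, and this non-degeneracy is false in general for the Stefan problem. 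Even if it held, this lower bound is small throughout a thin finger and gives no volume.
\item \emph{Step (2).} It runs in the wrong direction: energy balance bounds the swept volume \emph{above} by $(1-\ell)^{-1}$ times the flux, whereas you need a \emph{lower} bound. The average speed $r_0/t_0$ is useless, since $t_0$ is uncontrolled.
\item \emph{Step (4).} A subsolution must satisfy $V_\nu\le|\nabla v|/(1-h)$ with $1-h$ as large as $1+\ell$. The admissible coefficient is therefore $1/(1+\ell)$; $1/(1-\ell)$ is the supersolution coefficient. More seriously, nothing in (4) determines how far the barrier's front advances by time $t_0$.
\end{enumerate}

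\textbf{The missing idea.} The paper never needs regularity or non-degeneracy of $u$ itself. It sandwiches $u$ between two barriers with \emph{smooth} fronts, and ties both front speeds to one scalar: the value $a(t)$ of $u$ at a fixed point of $\Omega_g^+$ at distance $\sim r_0$ from $y_0$. This is done for $t\ge t_1$, where $t_1$ is the waiting time from \Cref{prop:finite-speed-inner}, which keeps all constants non-degenerate.
\begin{itemize}
\item \emph{Upper barrier.} $w$ is caloric in a region whose front moves from near $y_0$ towards $x_0$ with speed $\frac{C_1}{1-\ell}A^\sigma(\alpha,t)$, where $A^\sigma$ is an angular inf-convolution of the trace of an auxiliary caloric function on a sphere behind the front. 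Boundary Harnack applied on \emph{this} regular domain gives $|\nabla w|\le CA^\sigma$. So $w$ is a supersolution and $\Omega_u^+\subset\Omega_w^+$. The extremality of $t_0$ enters here: it keeps $u\equiv0$ on the inner boundary of the comparison region up to $t_0$.
\item \emph{Transfer.} Since $x_0\in\Gamma_u^+(t_0)$, the front of $w$ must have travelled a distance $\sim r_0$ by time $t_0$. Interior Harnack gives $A^\sigma\lesssim a$, and together these yield $\int_{t_1}^{t_0}a\,dt\gtrsim(1-\ell)r_0$.
\item \emph{Lower barrier.} $v$ is caloric above a paraboloid $x_d>|x'|^2/r_0+s(t)$ with vertex at $y_0$, moving with speed $\frac{c_1}{1+\ell}a(t)$. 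Hopf's lemma on its smooth domain gives $|\nabla v|\gtrsim a(t)$, so $v$ is a subsolution and $\Omega_v^+\subset\Omega_u^+$.
\item \emph{Conclusion.} The lower bound on $\int a$ forces the paraboloid to advance by $\gtrsim(1-\ell)r_0$. The volume of its cap inside $B_{r_0}(x_0)$ then gives the density estimate.
\end{itemize}
This transfer from upper to lower barrier through the common driver $a(t)$ is what your proposal lacks.
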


We discuss and prove \Cref{lemma:nofiltration} in \Cref{subsec:density-estimates}. Here we only observe that the aforementioned time $t_0>0$ exists. Indeed $\Gamma^+ \cap \de ((\Omega_g^+ + B_{r_0})\times(0,+\infty)) \subset \R^d\times(0,+\infty)$ is closed, since the temperatures of enthalpy solutions are continuous \cite{DiBenedetto1982:Stefan1,CaffarelliEvans1983:ContinuityTwoPhaseStefan,Ziemer1982:Stefan,BaroniKuusiUrbano:TwoPhaseStefan}; moreover in \Cref{prop:finite-speed-inner} we prove that the free boundary moves with finite speed and therefore such $t_0>0$ exists.

Once we have proved the density estimates from \Cref{lemma:nofiltration}, the proof of \Cref{thm:finite-melting} follows.

\begin{proof}[Proof of \Cref{thm:finite-melting}]
    We prove the statement for $\Omega_u^+(t)$, the analogous result for negative phase $\Omega_u^-(t)$ follows accordingly.
    
We fix $R>0$ such that $\Gamma^+ \cap ( (\Omega_g^+ + B_{2R})\times(0,+\infty))\ne\emptyset$, and  
    we define the set 
    \begin{equation}\label{e:Omega-enlarged}
        \Omega_R^+ := \R^d \setminus \big(\big( \R^d \setminus \big( \Omega_g^+ + B_{2R} \big) \big) + B_R \big),
    \end{equation}
    so that $\Omega_g^+ \subset \Omega_R^+ \subset \Omega_g^+ + B_R$ and it satisfies the two-sided ball condition with radius $R>0$.
    Let us now consider the one-phase enthalpy solution $(w_\eps,\mu_\eps)$ in $\R^{d+1}_+$ with initial temperature $g_\eps$ defined as
    \[
        g_\eps(x) := g^+ + \big(\eps\,\wedge \,\distop(x,\R^d\setminus \Omega_R^+)\big) e^{-|x|^2},
    \]
    so that $\Omega_R^+ = \Omega_{g_\eps}^+$ and $\|g_\eps\|_{L^1} \le \|g^+\|_{L^1}+ C_d\,\eps$, for some $C_d>0$.
    Let us also fix the initial mushy coefficient $h_\eps$ defined as
    \[
        h_\eps(x) = 1 \qquad \text{in}\quad \Omega_R^+ \qquad\text{and}\qquad h_\eps(x) = 1-\ell \qquad \text{in} \quad \R^d \setminus \Omega_R^+,
    \]
    so that it satisfies \eqref{e:phase-separation-section}. 
    
    The comparison principle implies that, for all $t\ge0$ and $\eps>0$, it holds
    \[
        u(x,t) \le w_\eps(x,t) \qquad \text{for almost every}\quad x\in D,
    \]
    and therefore, for all $r<R/10$ there exists $(x_{0,r},t_{0,r})\in \Gamma^+_{w_\eps}$ such that  $x_{0,r} \in \Gamma^+_{w_\eps}(t_0) \cap (\Omega_g^+ + B_r)$ and
    \[
    t_{0,r} := \min\big\{t>0 :\Gamma^+_{w_\eps} \cap \big((\Omega_g^+ + B_r)\times(0,+\infty)\big)\ne\emptyset  \big\}.
    \]
    Thus \Cref{lemma:nofiltration} applies to $(w_\eps,\mu_\eps)$ at $(x_{0,r},t_{0,r})$, and so
    \[
        \big|\Omega_{w_\eps}^+(t) \cap B_r(x_{0,r}) \big| \ge c(r)(1-\ell)^{\sfrac d2},
    \]
    Combining the latter estimates with the measure estimates provided by \Cref{lemma:measure-est}, we get
    \[
    c(r)(1-\ell)^{\sfrac d2} \le \big|\Omega_{w_\eps}^+(t) \cap B_r(x_{0,r}) \big| \le \big|\Omega_{w_\eps}^+(t) \setminus \Omega_R^+ \big| \le \|g^+\|_{L^1(D)},
    \]
    and since $c(r)$ is positive, strictly increasing and continuous, taking $\eps\to0$ we get that
    \[
    r \le c^{-1}\left(\frac{\|g^+\|_{L^1(D)}}{(1-\ell)^{\sfrac d2}}\right) =: \overline R_g \qquad\text{for all}\quad r\in(0,R/10).
    \]
    Thus, taking $R=11\overline R_g$, by the comparison principle we get that
    \[
        \Omega_u^+(t) \subset \Omega_{w_\eps}^+(t) \subset \Omega_{g}^+ + B_{R_g^+} \qquad\text{for all}\quad t\ge0,
    \]
    where $R_g^+ := 12 \overline R_g$.
    Through the same argument we prove that there exists $R_g^->0$ such that
    \[
        \Omega_u^-(t) \subset\Omega_g^+ + B_{R_g^-} \qquad \text{for all}\quad t\ge0,
    \]
    and hence we conclude the proof of \Cref{thm:finite-melting} by taking $R_g := (R_g^+ \vee R_g^-)$.
\end{proof}

The proof of \Cref{thm:finite-melting-one-phase} follows by similar ideas.

\begin{proof}[Proof of \Cref{thm:finite-melting-one-phase}]
    By \Cref{prop:one-phase:positivity-increasing}, the positivity sets $\Omega_u^+(t)$ are monotone in time and therefore  $\mu(x,t) = h(x)$ in $\Interior(D\setminus \Omega_u^+(t))$ for all $t\ge0$.
    Therefore, as for the proof of \Cref{thm:finite-melting}, for all $t\ge0$, $R>0$ and $\eps>0$ we can define $\Omega_R^+(t)$, $g_{t,\eps}$, $h_{t}$ respectively as
    \begin{align*}
        \Omega_R^+(t) &:= \R^d \setminus \big(\big( \R^d \setminus \big( \Omega_u^+(t) + B_{2R} \big) \big) + B_R \big),
    \end{align*}
    \begin{align*}
        g_{t,\eps} &:= u(t) + \big( \eps \wedge \distop(x,\R^d\setminus \Omega_R^+(t) \big)e^{-|x|^2}, 
    \end{align*}
    \begin{align*}
        h_t(x) &= 1 \qquad\text{in}\quad \Omega_R^+(t) \qquad \text{and}\qquad h_{t}(x) = 1-\ell \qquad \text{in}\quad \R^d\setminus\Omega_R^+(t).
    \end{align*}
    In particular it holds that
    \[
    g_{t,\eps}(x) \ge u(x,t) \qquad\text{and} \qquad h_{t}(x) \ge \mu(x,t) \qquad\text{for almost every}\quad x \in D.
    \]
    As in \Cref{lemma:measure-est}, by the monotonicity of $\Omega_u^+(t)$ and of the enthalpy (\Cref{thm:total-monotonicity-formula}), we get that 
    \[  
    \big|\Omega_{u}^+(t+s) \setminus \Omega_u^+(t) \big| 
    \le \frac{1}{1-\ell} \left(\|u(t) \|_{L^1(D)} - \|u(t+s) \|_{L^1(D)}\right)
    \le
    \frac{1}{1-\ell} \|u(t) - u(t+s) \|_{L^1(D)},
    \]
    for all $s>0$. Thus, by \Cref{lemma:nofiltration} we conclude as in the proof of \Cref{thm:finite-melting}.
\end{proof}

\begin{proof}[Proof of \Cref{cor:asymptotic-regular}]
    Let us fix $\eps>0$, and we define $g_\delta \in H^1(\R^d)$ as
    \[
        g_\delta(x) := \big(\delta \wedge \distop(x,\R^d\setminus \Omega)  \big) \frac{e^{-\frac{|x|^2}{2}}}{(2\pi)^d} , 
    \]
    so that $\Omega_{g_\delta}^+ = \Omega$ and $\| g_\delta \|_{L^1(D)}\le \delta$.
    Let then $(u_\delta,\mu_\delta)$ be the solution with initial temperature $g_\delta$ and initial mushy coefficient $h = \ind_\Omega$.
    By \Cref{thm:finite-melting} there exists $R_{\delta}>0$ such that 
    \[
        \Omega \subset \Omega_{u_\delta}^+(t) \subset \Omega + B_{R_\delta} \qquad\text{for all}\quad t\ge0.
    \]
    Since $R_\delta \to 0$ as $\delta\to0$, for all $\eps>0$ there exists $\delta >0$ such that $R_\delta < \eps$.
\end{proof}

\subsection{On the fixed scale density estimates for moving points}\label{subsec:density-estimates} 

The proof of \Cref{lemma:nofiltration} requires a preliminary results, a finite speed of propagation property.

\begin{proposition}[Finite speed of propagation - closing rate]\label{prop:finite-speed-inner}
    Let $D\subset\R^d$ and $\ell\in(0,1)$.
    Let $(u,\mu)$ be the enthalpy solution in $D_\infty$ with homogeneous Dirichlet boundary conditions, initial temperature $g \in H^1_0(D) \cap L^1(D)$, and initial mushy coefficient $h\in L^\infty(D)$ that satisfies \eqref{e:phase-separation-section}.
    If there exists $x_0\in D$ and $R_0>0$ such that
    \[
        g \equiv 0 \qquad\text{in}\quad B_{R_0}(x_0),
    \]
    then, for all $\eta\in(0,\sfrac{1}{10})$ there exists $t_1 = t_1(d,\ell,R_0\,\| g\|_{L^1},\eta)>0$ such that it holds
    \[
        u(x,t) \equiv 0 \qquad \text{in}\quad B_{(1-\eta)R_0}(x_0) \qquad \text{for all} \quad t \le t_1.
    \]
\end{proposition}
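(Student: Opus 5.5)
The plan is to trap $u$ between two explicit radial barriers. Each barrier is a one-phase classical supersolution (resp. subsolution) whose zero set is a ball around $x_0$ shrinking at a controlled constant speed. By symmetry (change $(u,\mu)\mapsto(-u,-\mu)$) it suffices to prove $u\le 0$ in $B_{(1-\eta)R_0}(x_0)$ for $t\le t_1$. Translate so that $x_0=0$ and write $r=|x|$.

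\emph{Step 1: smallness of $u^+$ on an inner sphere for short times.} By \Cref{prop:subcaloric} and \Cref{lemma:comparison-heat} (extending by zero outside $D$, which is compatible with the Dirichlet condition), $u^+(\cdot,t)\le K_t\ast g^+$. Since $g\equiv0$ in $B_{R_0}$, every point with $|x|\le(1-\eta/8)R_0$ lies at distance at least $\eta R_0/8$ from $\spt g^+$. Hence
\[
u^+(x,t)\le M(t):=(4\pi t)^{-d/2}e^{-\eta^2R_0^2/(256\,t)}\|g\|_{L^1(D)}\qquad\text{for }|x|\le (1-\eta/8)R_0 .
\]
For $t\le t_1$ we have $M(t)\le M^\ast(t_1):=\sup_{t\le t_1}M(t)$, and $M^\ast(t_1)\to0$ as $t_1\to0$ at a rate depending only on $d,\eta,R_0,\|g\|_{L^1}$.

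\emph{Step 2: the barrier.} Set $R_{out}:=(1-\eta/8)R_0$, $\rho(t):=(1-\eta/4)R_0-Vt$ and
\[
w(x,t):=a\big(e^{k(r-\rho(t))}-1\big)^+,\qquad \mu_w:=1 \text{ on }\{w>0\},\quad \mu_w:=h\text{ elsewhere},
\]
in the cylinder $\big(D\cap B_{R_{out}}\big)\times[0,t_1]$. In $\{w>0\}$ we have $\de_t w=akVe^{k(r-\rho)}$ and $\Delta w=a\big(k^2+k\tfrac{d-1}{r}\big)e^{k(r-\rho)}$. For $r\ge R_0/2$, $w$ is therefore supercaloric as soon as $V\ge k+2(d-1)/R_0$.

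On the free boundary $\{r=\rho\}$ the supersolution condition of \Cref{def:classical-sol-op} is $V\ge |\nabla w|/(1-h)$. Since $h\le\ell$ there, it suffices to have $V\ge ak/(1-\ell)$.

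The lateral condition on $\de B_{R_{out}}$ requires $w\ge M^\ast$ there. Because $R_{out}-\rho(t)\ge \eta R_0/8$, this holds if $a\big(e^{k\eta R_0/8}-1\big)\ge M^\ast(t_1)$. On $\de D$ we have $u=0\le w$.

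Choose $k=1/R_0$, $a=M^\ast(t_1)/(e^{\eta/8}-1)$ and $V=2d/R_0+ak/(1-\ell)$. The front reaches radius $(1-\eta)R_0$ after time $3\eta R_0/(4V)$. So we impose $t_1\le 3\eta R_0/(4V)$, which is consistent because $V$ stays bounded as $t_1\to0$ (indeed $a\to0$). This fixes $t_1=t_1(d,\ell,R_0,\|g\|_{L^1},\eta)$.

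The initial comparison is immediate: at $t=0$ we have $w\ge0=g$ in $B_{R_{out}}$. On $\{w>0\}$ we have $\mu_w=1\ge h$, and elsewhere $\mu_w=h$, so $\mu_w(\cdot,0)\ge\mu(\cdot,0)$.

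\emph{Step 3: comparison and conclusion.} By \Cref{prop:classical-are-weak}, $(w,\mu_w)$ is an enthalpy supersolution; the corner of the truncation at $r=\rho$ causes no trouble because $w$ is Lipschitz and the jump of $\mu_w$ across the front is exactly what the velocity inequality accounts for. Apply \Cref{lemma:comparison-stefan} in $(D\cap B_{R_{out}})\times[0,t_1]$, with $(u,\mu)$ as the subsolution. This gives $u\le w$, so $u\le 0$ in $B_{\rho(t)}\supset B_{(1-\eta)R_0}$ for all $t\le t_1$. The symmetric argument gives $u\ge -w$, and hence $u\equiv0$ there.

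\emph{Expected obstacle.} The delicate point is the rigorous use of \Cref{lemma:comparison-stefan}. First, $(u,\mu)$ must be restricted to the sub-cylinder $(D\cap B_{R_{out}})\times[0,t_1]$, with the lateral boundary data controlled only through the heat-kernel bound of Step 1. This requires checking that $(w-u)^-$ belongs to $H^1_{0,P}$ there; this uses the continuity of $u$ together with Step 1. Second, the barrier $w$ is only a classical supersolution off the cutoff. I would handle this by verifying the weak inequality \eqref{e:subsolution-enthalpy} directly for $(w,\mu_w)$: integrate by parts on $\{w>0\}$, and observe that the boundary term $\big(V(1-h)-|\nabla w|\big)\eta$ on the front has the correct sign.
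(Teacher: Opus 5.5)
Your proof is correct, but it takes a different route from the paper's.

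\textbf{The paper's route.} The paper builds a single \emph{global} barrier on all of $\R^d$:
$\overline w(x,t)=\int_{\R^d\setminus B_{R(t)}}G^E_{R(t)}(x,y;t)\,g^+(y)\dif y$.
This is the Dirichlet heat flow of $g^+$ in the exterior of a ball whose radius $R(t)=(1-s(t))R_0$ shrinks linearly. Supercaloricity comes from domain monotonicity of Dirichlet heat kernels ($\de_R G^E_R<0$ and $R'<0$). The front inequality comes from the Gaussian upper bounds of Grigor'yan--Saloff-Coste and Zhang for exterior heat kernels, which give $|\nabla\overline w|\le C\|g^+\|_{L^1}$ on $\de B_{R(t)}$. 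As a result the front speed is $C\|g^+\|_{L^1}/(1-\ell)$. The closing time $t_1$ is then explicit, of order $(1-\ell)/\|g^+\|_{L^1(D)}$ up to constants depending on $d,R_0,\eta$. No auxiliary lateral boundary enters the comparison.

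\textbf{Your route.} You localize to $B_{R_{out}}$ and use an explicit exponential radial profile. Your only heat-kernel input is the free-space off-support bound $u^+\le K_t\ast g^+$, which the paper already provides via \Cref{prop:subcaloric} and \Cref{lemma:comparison-heat}. Supercaloricity and the front condition then reduce to one-line computations. This buys you an elementary, self-contained argument with no exterior-domain heat kernel estimates or boundary gradient bounds.

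Your direct verification of the weak inequality is in fact necessary, not optional. \Cref{prop:classical-are-weak}, as stated, requires $h\in C(D)$ and a smooth domain. Here $h$ is only measurable and $D\cap B_{R_{out}}$ need not be smooth. Your computation of the front term $\big(V(1-h)-|\nabla w|\big)\eta$ handles both issues.

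\textbf{The cost of localizing.}
\begin{itemize}
\item You must run the comparison on a sub-cylinder and check that $(w-u)^-\in H^1_{0,P}$ there. This is routine and does not need continuity of $u$ up to $t=0$. Indeed $0\le(u-w)^+\le\min\{u^+,(K_t\ast g^+-w)^+\}$. Here $u^+$ vanishes on $\de D$, and $(K_t\ast g^+-w)^+$ vanishes on $\de B_{R_{out}}$ by your choice of $a$.
\item The curvature term $(d-1)w'/r$ and the convexity of your profile force a minimal speed $V\ge k+(d-1)/r$. So your $t_1$ is defined only implicitly through $M^\ast(t_1)$, and it stays $\lesssim \eta R_0^2/d$ even as $\|g\|_{L^1}\to0$.
\end{itemize}
Neither point matters for the proposition, which only asks for some $t_1>0$ depending on $d,\ell,R_0,\|g\|_{L^1},\eta$.
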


\begin{proof}
    We prove that $u(\cdot,t) \le 0$, while the other inequality work by changing the sign of the barrier. Let us assume that $x_0=0$.
    Let $s : (0,1) \to (0,+\infty)$ be a strictly increasing function defined as 
    \[
        s(t) = s_\eta(t) :=  \frac{C_1\, \|g^+\|_{L^1(D)}}{1-\ell} \,t + \frac{\eta}{2},
    \]
    where $C_1= C_1(d,R_0)>0$ is a constant to be made precise later. With this definition, $s(0) = \sfrac{\eta}{2}$, and we define $t_0 >0$ as the time such that $s(t_0) = \sfrac12$.
    Let us then define the function
    \[
        R : (0,t_0) \longrightarrow (0,R_0), \qquad R(t):= \big(1-s(t)\big)R_0,
    \]
    and the function 
    $\overline{w} : \R^d \times (0,t_0) \to (0,+\infty)$ as
    \[
        \overline{w}(x,t) = \int_{\R^d\setminus B_{R(t)}} G_{R}^{E}(x,y;t)\, g^+(y) \dif y,
    \]
    where $G_{R}^E = G_{R(t)}^{E}$ is the Green function associated to $\R^d\setminus B_{R(t)}$ and with homogeneous Dirichlet boundary datum.
    We claim that for a specific choice of $C_1>0$, $\overline{w}$ is a supersolution of the Stefan problem. 

    By construction $R'(t)<0$, while the maximum principle of the heat equation implies that $\de_RG^E_{R}<0$. Therefore, by direct computation we get the following inequality
    \[
        (\de_t - \Delta)\overline{w}(x,t) = \int_{\R^d\setminus B_{R(t)}} R'(t)\,\de_R G^E_{R(t)}(x,y;t) g^+(y) \dif y\ge0 \qquad\text{in} \quad\Omega^+_{\overline{w}} = \big\{(x,t) \in \R^{d+1}_+ : |x| \ge R(t) \big\}.
    \]

    On $\Gamma^+_{\overline{w}}(t) = \de B_{R(t)}$, the Gaussian bounds for the Green functions (see Grigor'yan and Saloff-Coste \cite{GrigoryanSaloff-Coste2002:GaussianBoundExteriorsets} and Zhang \cite{Zhang:2003:GaussianBoundsManifolds}) read as follows
    \begin{equation}\label{e:gaussian-bound}
        G_{R}^E(x,y;t) \le C_{2} \frac{D_d(x,t) D_d(y,t)}{(2\pi t)^{\sfrac d2}} e^{-C_3\frac{|x-y|^2}{t}} \qquad \text{for all}\quad x,y\in \R^d\setminus B_{R(t)} \quad\text{and}\quad t>0,
    \end{equation}
    where $C_{i}= C_{i}(d,R_0)>0$ for $i=2,3$ and $D_d$ are defined as follows: 
    \[
    D_2(x,t) = \frac{\log\left(\sfrac{|x|}{R(t)}\right)}{\log(1+\sfrac{\sqrt{t}}{R(t)})+\log\left(\sfrac{|x|}{R(t)}\right)} \quad\text{if}\quad d=2 
        \qquad\text{and}\qquad
    D_d(x,t) = \frac{|x|-R(t)}{\sqrt{t}\wedge 1}\wedge 1 \quad\text{if}\quad d\ge 3.
    \]
    By direct computation, for all $d\in\NN$ there exists $C_4 = C_4(d,R_0,\eta)>0$ such that
    \[
        \big| D_d(x,t) \big|  + \big|\nabla_x D_d(x,t) \big| \le \frac{C_4}{\sqrt{t}},
    \]
    and therefore, since for all $t\le t_0$, $R_0/2 < R(t) < (1-\sfrac{\eta}{2})R_0$,  there exists $C_5 = C_5 (d,R_0)>0$ such that
    \[
        \left|\nabla_x \left(D_d(x,t) e^{-C_3\frac{|x-y|^2}{t}}\right) \right| \le C_5 \qquad \text{for all}\quad t\le t_0, \quad x\in \Gamma^+_{\overline{w}}(t), \quad\text{and}\quad y \in \Omega_g^+.
    \]

    The estimate for the Green function (up to the boundary)  implies an estimate over the gradient $|\nabla \overline{w}|$ on $\Gamma_{\overline{w}}^+(t)$, therefore we get the following estimate:
    \begin{align*}
        |\nabla u(x,t)| \le |\nabla \overline w(x,t)| 
            &\le C_5 \|g^+\|_{L^1(D)} \qquad\text{for all}\quad t\le t_0 \quad \text{and}\quad x \in \Gamma_{\overline{w}}^+(t).
    \end{align*}

    Since $V_\nu = - R'(t) = R_0\, s'(t)$, taking $C_1 := C_5/R_0$ we get
    \[
        V_\nu = \frac{C_5 }{1-\ell}  \|g^+\|_{L^1(D)} \ge \frac{1}{1-\ell} |\nabla \overline{w}| \qquad \text{on}\quad \Gamma^+_{\overline{w}}(t),
    \]
    and so $\overline{w}$ is a supersolution of the Stefan problem.
    In particular, it holds
    \[
    s(t) \le \eta \qquad \text{for all}\quad t<t_1 =  t_1(d,R_0,\eta) =  \frac{(1-\ell)R_0}{C_1\, \|g^+\|_{L^1(D)}},
    \]
    and therefore, since $\overline w$ is a supersolution of the Stefan problem we get that
    \[
        u(x,t) \le \overline{w}(x,t) =0 \qquad\text{in} \quad B_{(1-\eta)R_0} \qquad\text{for all} \quad t\le  t_1,
    \]
    and so we conclude the proof.
\end{proof}

Before giving the proof of \Cref{lemma:nofiltration}, we make a few remarks about it, in order to make clear what is the geometric means of certain hypotheses, in order to clarify the heuristic argument of the proof.

\begin{remark}[Exterior ball condition]
    \Cref{lemma:nofiltration} is stated for a target point $(x_0,t_0)$ and and assuming the exterior ball condition for a ball with radius $R_0>r_0$.
    Their role is twofold:
    \begin{itemize}
        \item Within these assumptions, there exists a unique $y_0\in \Gamma^+_g$ such that $|x_0-y_0|=r_0$.
        In particular, the Lemma is not investigating the point $x_0$, but rather the evolution of the free boundary points that lie nearby $y_0$, showing that at time $t_0$ they lies close to the point $x_0$.

        \item The choice of $t_0>0$ as the first time the free boundary moves of such length together with the exterior ball condition prevents $(x_0,t_0)$ to be a collapsing singularity of the free boundary (see \cite{Fioravanti_Ros-Oton_Torres-Latorre_2025}).
        This also assures that the positive phase spread from $y_0$ to $x_0$.
        
    \end{itemize}    
\end{remark}

\begin{remark}[Interior ball condition]
    The free boundary of the barrier from below $v$ consists in a family of traveling paraboloids with a certain speed law $s(t)$ and opening $1/r_0$.
    The interior ball condition has two consequences:
    \begin{itemize}
        \item It allows the family of paraboloids to touch $B_{r_0}(x_0)$ at initial time, and thus it invades the ball for all positive times $t_0>0$.

        \item Up to a constant, $s'(t)$ coincides with the normal speed of the free boundary. 
        In order to show $v$ is  classical subsolution, and so that
        \[
            s'(t) \le |\nabla  v(x,t)| \qquad\text{for all}\quad x\in \Gamma_v^+(t),
        \]
        we heavily rely on the boundary Harnack principle for caloric functions. To get a control that depends only on $r_0>0$, we need the size of such phase to be sufficiently large, and this is provided by the interior ball condition with $R_0>r_0$.
    \end{itemize} 
\end{remark}

\begin{proof}[Proof of \Cref{lemma:nofiltration}]
First, without loss of generality we take $R_0 = 10r_0$; second, since $\Omega^+_g$ satisfies the two-sided ball condition, there exists a unique point $y_0\in \Gamma^+_g$ such that $|x_0 - y_0|=r_0$ and up to translation and rotation, we can assume $y_0 = R_0 e_d$, $x_0 = 9r_0 e_d$ and that the ball of radius $R_0$ that touches $\Gamma^+$ from the outside at $y_0$ to be centered in the origin. We notice that with choice of coordinates the origin is the center of the exterior tangent ball (of radius $10r_0$) to $\Gamma_g^+$ at $y_0$. We also set $z_0 := {8}r_0 e_d$ and we notice that $x_0,z_0\in B_{10 r_0}\subset \R^d\setminus\Omega_g^+$.
Let us also fix $\eta = \sfrac{1}{10}$ (we will keep writing $\eta>0$ along the proof for the sake of readability).
By \Cref{prop:finite-speed-inner}, there exists $t_1 = t_1(d,r_0,\|g\|_{L^1(D)},\eta)>0$ such that 
\[
    u(\cdot,t) \equiv 0 \qquad\text{in} \quad B_{(10-\eta)r_0} \qquad \text{for all} \quad t \le t_1.
\]

Finally, we recall the notation for spherical coordinates: for all $x \in \R^d\setminus\{0\}$ there exists $\rho = |x|$ and $\alpha \in \sphere^{d-1}$ such that $ x = (\rho,\alpha)$.\medskip

We now prove the statement by constructing two barriers $w$ and $v$, respectively from above and and from below, so that the speeds of the respectively free boundaries are comparable up to a constant $C = C(d,r_0,\|g\|_{L^1})>0$. Thus, once $\Gamma^+_w(t)$ reaches $x_0$, the latter has moved enough, so it guarantees \eqref{e:nofiltration-small}.
The barriers $w$ and $v$ will be (respectively) a supersolution and a subsolution of the Stefan problem only for $t\ge t_1$ and we need this waiting time in order to apply the Boundary Harnack Principle to show that 
\[
   v(\cdot,t_0) \le u(\cdot,t_0) \le w(\cdot,t_0).
\]
Moreover, by \eqref{e:gaussian-bound} we get that there exists $C = C(d,r_0,\|g\|_{L^1(D)})>0$ such that
\[
    \|u(\cdot,t)\|_{L^\infty} \le C \qquad\text{for all}\quad t\ge t_1,
\]
and so we will consider solutions that are provided with uniform $L^\infty$-bounds.

\paragraph{The barrier from above}
Let us consider the auxiliary caloric problem defined by
\[
	\begin{cases}
		\de_t \overline{w} = \Delta \overline{w} &\text{in} \quad B_{12r_0}\times(0,t_0)\\
		\overline{w}(x,0) = g(x) &\text{in} \quad B_{12r_0}\\
		\overline{w} (x,t) = u(x,t) &\text{on}\quad \de B_{12r_0} \times (0,t_0).
	\end{cases}
\]
We define the auxiliary function $\widetilde{A} : \sphere^{d-1} \times [0,t_0) \to [0,1]$ as
\[
	\widetilde{A}(\alpha,t) :=   \overline{w}(11r_0,\alpha,t).
\] 
For every $\sigma>0$, we can define $A^\sigma : \sphere^{d-1} \times[0,t_0) \to [0,1]$ as its (space) inf-convolution 
\[
	A^\sigma(\alpha,t) := \inf_{\beta\in\mathbb S^{d-1}} \Big\{\widetilde{A}(\beta,t) + (\sfrac{1}{\sigma})^2 |\alpha - \beta|^2\Big\}. 
\]
By definition 
\[
A^\sigma(\alpha,t)\le \widetilde{A} (\alpha,t)\qquad\text{for all}\quad \alpha\in\sphere^{d-1}\quad\text{and}\quad t \ge 0.
\]
Since $t\ge t_1$, there exists $\sigma = \sigma(d,r_0,\|g\|_{L^1})>0$ sufficiently small, for which  \[
A^\sigma(\alpha,t) = \widetilde A(\beta,t) + (\sfrac{1}{\sigma})^2 |\alpha - \beta|^2,
\] 
for some $\beta\in\mathbb S^{d-1}$ satisfying $|a-\beta|\le\sfrac{1}{10}$.
By construction (see, for instance, \cite{Caffarelli_Cabre_1995:ellipticequations}) the inf-convolution $A^\sigma$ is $C^{1,1}_x$ from above with norm $\sfrac{1}{\sigma^2}$, meaning that the graph of $A^\sigma$ has the ball property from above with radius $\sigma^2$.
Moreover, by \cite[Theorem 1]{Fabes_Garofalo_Salsa_1984} there exists $C = C(d,r_0,\|g\|_{L^1})>0$ such that
\[
	\widetilde{A}(\alpha,t) \le C\,A^\sigma(\alpha,t) \qquad\text{for all}\quad \alpha\in\sphere^{d-1}\quad\text{and}\quad t \ge t_1.
\]

Finally we define the set $\Omega_w^+ \subset B_{12r_0}\times[0,t_0)$ as
\[
  \Omega_{w}^+ := \Big\{ (\rho,\alpha,t)\in \R\times\mathbb S^{d-1}\times[0,t_0) : r(\alpha,t) < \rho < 12 r_0 \Big\},
\]
where, for all $\alpha\in\sphere^{d-1}$,
\[
r(\alpha,t):=\begin{cases}
(10-\eta) r_0 &\text{for}\quad t\in[0,t_1],\\
\inf\big\{\rho(\alpha,t),r_0\big\} &\text{for}\quad t\in[t_1,t_0],\\
\end{cases}
\] 
where $\rho(\alpha,\cdot):[t_1,t_0]\to\R$ 
is defined as follows
\begin{equation}\label{e:nofiltration-upper}
    \rho(\alpha,t) := (10-\eta) r_0  - \frac{C_1}{1-\ell} \int_{t_1}^{t} A^\sigma(\alpha,\tau)\dif \tau, 
\end{equation}
where $\eta =\sfrac{1}{10}$ is the constant we discussed above, while ${C_1 = C_1(d,r_0,\|g\|_{L^1})>0}$ is a suitable large constant to be chosen in the following discussion.
Since $A^\sigma(\cdot,t)$ is $C^{1,1}_{x}$ for all $t>t_1$, the same holds for $r(\cdot,t)$ such that $r(\alpha,t)> 2r_0$; therefore, for all times $t_1 < t< t_0$, $\Omega^+_w(t)$ satisfies the exterior ball property with radius $(\sigma r_0)^2$ in $B_{12r_0}\setminus B_{2r_0}$.

Let us finally define $w : \Omega^+_w\times[0,t_0) \to [0,1]$ as the caloric function such that
\[
    \begin{cases}
        \de_t w = \Delta w & \text{in} \quad \Omega^+_w\\
        w(x,t) = u(x,t) & \text{on} \quad \de B_{12r_0} \times(0,t_0)\\
       w(x,t) = 0 & \text{on} \quad \de \Omega_w^+\cap (B_{12r_0} \times(0,t_0))\\
        w(x,0) = g(x) &\text{in} \quad B_{12r_0}\setminus B_{(10-\eta)r_0}.
    \end{cases}
\]
\Cref{prop:finite-speed-inner} provides a speed bounds, thus it follows that
\[
	\Omega_u^+(t) \subset \Omega_w^+(t) \qquad \text{for all}\quad t\le t_1,
\]
and since $u$ and $w$ are caloric in $\Omega_u^+ \cap B_{12r_0} \cap \{t\le t_1\}$, we get
\[
	u(x,t_1) \le w(x,t_1) \qquad\text{for almost every}\quad x\in B_{12r_0}.
\]

We claim that for a certain choice of $C_1 = C_1(d,r_0)>0$, $w(x,t)$ in a classical supersolution of the Stefan problem in $(B_{12r_0}\setminus B_{2r_0})\times(t_1,t_0)$ with respect to $h(x) \equiv (1-\ell)$. Thus, since 
\[
	u(\cdot,t) \equiv 0 \qquad\text{on}\quad \de B_{2r_0} \qquad\text{for all}\quad t\le t_0,
\]
for all $t\le t_0$ it holds
\[
	u(x,t) \le w(x,t) \qquad\text{for almost every}\quad x\in B_{12r_0}.
\]
We next prove that $w(x,t)$ in a classical supersolution of the Stefan problem.
Since $w$ is caloric in $\Omega_w^+$, we only need to check the free boundary condition.
However, for all times $t_1 < t < t_0$, the sets $\Omega^+_w(t)$ are increasing in time and have the exterior ball condition outside $B_{2r_0}$.
Thus, Parabolic Boundary Harnack inequalities apply (see, for instance, \cite{BanerjeeGarofalo2016:Higher-Order-Parabolic-Boundary-Schauder,DeSilvaSavin2022:ParabolicBoundaryHarnack,AudritoFioravantiVita2024:Parabolic-Higher-Order-Schauder,AudritoFioravantiVita2025:Higher-Order-Parabolic-Boundary-Schauder}) and thus there exists $C_2 = C_2(d,r_0,t_1) = C_2(d,r_0,\|g\|_{L^1})>0$ such that
\begin{equation}\label{e:nofiltration-boundary-harnack-above}
    |\nabla w(r(\alpha,t),\alpha) | \le C _2A^{\sigma}(\alpha,t) \qquad \text{for all} \quad t_1 < t < t_0 \quad \text{such that} \quad r(\alpha,t) > 2r_0.
\end{equation}
We observe that, the latter estimate does not depend on $r'(\alpha,t)$, but only on $r(\alpha,t)$ (and so, on $r_0>0$).
Since $V_\nu(\alpha,t) = r'(\alpha,t)$, taking $C_1 = C_2$ as in \eqref{e:nofiltration-boundary-harnack-above}, it follows that $w$ is a classical supersolution of the Stefan problem in $(B_{12r_0}\setminus B_{2r_0})\times(t_1,t_0)$, and thus
\[
	\Omega_u^+(t) \subset \Omega_w^+(t) \qquad\text{for all}\quad t \le t_0.
\]

\begin{center}
    \begin{figure}[ht]

        \begin{tikzpicture}[scale=0.7]
		\begin{scope}
			\clip (0,0) circle (5);

\begin{scope}
    \clip (0,0) circle (6);

\fill[MidnightBlue!10, even odd rule]
    (-10,-10) rectangle (10,10)
    plot[smooth cycle] coordinates {
        (2.5,0) (2,1) (0,3) (-2,2.6) (-3.5,0) (-2.5,-2.5) (0,-3.4) (2,-2.5) 
    };
    \draw[thick] 
    plot[smooth cycle] coordinates {
        (2.5,0) (2,1) (0,3) (-2,2.6) (-3.5,0) (-2.5,-2.5) (0,-3.4) (2,-2.5) 
    };

    \draw[fill=MidnightBlue!40,use Hobby shortcut]
    (6,-6) .. (3,-3) .. (3.3,-2) .. (3.1,0) .. (3.5,2) .. (3,5) .. (6,6) -- (6,-6);

     \end{scope}

		\end{scope}
		\node at (2,3) {$\Omega_w^+(t)$};
		\node at (4,0) {$\Omega_u^+(t)$};
		\node at (2.7,0) {$x_0$};

            \fill[black] (3,0) circle (1pt);
            \draw[dashed] (0,0) circle (3);
            \draw[dashed] (0,0) circle (5);

        \end{tikzpicture}
         \caption{The sets $\Omega_w^+(t)$ and $\Omega_u^+(t)$ for $t\le t_0$. The interior dashed ball $B_{9r_0}$, the larger ones $B_{12r_0}$. }
        \label{fig:nofiltration-barrier-above}
    \end{figure}
\end{center}

\paragraph{The barrier from below}
This time we define a barrier from below $v$ that is caloric and satisfies the classical subsolution condition only in $(B_{4r_0}(z_0)\setminus \Omega_g^+) \times (t_1,t_0)$.
Let us define the function
\[
    a : (0,t_0) \longrightarrow \R, \qquad
    a(t) := u(11r_0,e_1,t).
\]
By \Cref{prop:one-phase:positivity-increasing}, the positivity sets $\Omega_u^+(t)$ are increasing in time.
Moreover, the interior ball condition and Harnack inequalities for the caloric functions imply that there exists $K_1 = K_1(d,r_0,\|g\|_{L^1})>0$  such that
\[
    \frac{1}{K_1} < a(t) < K_1 \qquad\text{for all}\quad t\ge t_1. 
\]

Then we define $\Omega_v^+\subset \R^{d+1}_+$ as
\begin{equation}\label{e:nofiltration-below-set}
    \Omega_v^+ := \left\{ (x,t) \in \R^d\times(0,t_0) : x_d > \frac{1}{r_0}|x'|^2 + s(t) \right\},
\end{equation}
where $s : (0,t_0) \to (0,+\infty)$ is defined as
\[
    s(t) := 
    \begin{cases}
        10r_0 & \text{for} \quad t\in [0,t_1]\\
      \displaystyle  10r_0 - \frac{c_1}{1+\ell} \int_{t_1}^t a(\tau) \dif \tau &\text{for}\quad t\in[t_1,t_0],
    \end{cases}
\]
where $c_1 = c_1(d,r_0,\|g\|_{L^1(D)}) >0$ is to be defined later.

We define $v : \Omega_v^+ \cap \big(B_{4r_0}(z_0) \times (0,t_0)\big) \to \R $ as the solution of the following differential problem
\[
	\begin{cases}
		\de_t v = \Delta v &\text{in} \quad \Omega_v^+ \cap (B_{4r_0}(z_0) \times(0,t_0))\\
		v(x,0) = \phi(x)\,g(x) & \text{in}\quad  \Omega_v^+(0) \cap B_{4r_0}(z_0)\\
		v(x,t) = \phi(x)\,u(x,t) &\text{on}\quad  \Omega_v^+(t) \cap \de B_{4r_0}\\
		v(x,t) = 0 &\text{on}\quad \de \Omega_v^+(t) \cap B_{4r_0},
	\end{cases}
\]
where $0\le\phi\le\sfrac12$ is a cut-off function with $\Omega_{\phi}^+ = \Omega_v^+(0)$ and $\phi(e_1) = \sfrac12$.

By construction, 
\[
	\Omega_v^+(t) \cap B_{4r_0}(z_0) \subset \Omega_u^+(t) \cap B_{4r_0} (z_0)\qquad\text{for all}\quad t \le t_1,
\]
and thus the comparison principle for the heat equation implies that 
\begin{equation}\label{e:sotto-time-t_1}
    v(x,t_1) \le u(x,t_1) \qquad \text{for almost every}\quad x \in B_{4r_0}(z_0).
\end{equation}
We will prove that for all $t\le t_0$ we have
\begin{equation}\label{e:che-fai-tocchi?Noon-si-tocca}
    \Omega_v^+(t) \subset \Omega_u^+(t) 
        \qquad\text{and}\qquad
    v(x,t) \le u(x,t) \qquad \text{for almost every}\quad x\in B_{4r_0}(z_0).
\end{equation}
In order to prove \eqref{e:che-fai-tocchi?Noon-si-tocca} for all $t\le t_0$, we will first show that for a certain choice of $c_1>0$, $v$ is a classical subsolution of the Stefan problem in $(B_{4r_0}(z_0)\setminus \Omega_g^+) \times (t_1,t_0)$ and, in order to do this, we only need to show that $v$ satisfies the subsolution condition on $\Gamma_v^+ \setminus (\Omega_g^+\times(t_1,t_0))$. Notice that, by construction, for all $t\ge t_1$, $\Omega_v^+(t)$ satisfies the interior ball condition with radius $r_0>0$. Moreover, there exists $C = C(d,r_0,t_1)>0$ such that $s'(t) = \frac{c_1}{1-\ell}a(t) \le\frac{1}{1-\ell}C_1$.
Thus, by the interior Harnack inequality there exists $c  = c(d,r_0,t_1)>0$ such that
\[
    v(11r_0\,e_1,t) \ge c\, a(t) \qquad\text{for all}\quad t\ge t_1.
\]
Thus, the Hopf Lemma applies and so there exists a constant $c_2 = c_2(d,r_0,t_1) = c_2(d,r_0,\|g\|_{L^1})>0$ such that for all $x\in \Gamma^+_v(t)\cap \Omega_g^+$ (by construction these are $r_0$ away from the boundary of $B_{4r_0}(z_0)$ ) it holds
\[
    |\nabla v(x,t)| \ge c_2\, a(t) \qquad \text{for all}\quad x \in \Gamma^+_v(t) \cap \big( B_{4r_0}(z_0) \setminus \Omega_g^+\big) \quad \text{and for all}\quad t\ge t_1.
\]
By comparison principle, the constant $c_2>0$ is uniform for all choices of $c_1\in(0,1]$. Therefore, we can choose $c_1 = \min \big\{1, c_2\big\}$. Now, since $V_\nu \le s'(t)$, we get that for all $t\ge t_1$ it holds
\[
    V_\nu(x,t) \le \frac{c_1}{1+\ell} a(t)\le \frac{c_2}{1+\ell} a(t) \le \frac{1}{1+\ell} |\nabla v(x,t)| \qquad \text{for all}\quad x\in \Gamma_v^+(t) \cap \big(B_{4r_0}(z_0) \setminus \Omega_g^+\big).
\]
This implies that $v$ is a (classical) subsolution to the Stefan problem in $B_{4r_0}(z_0) \setminus \Omega_g^+$. 

We are now in position to show \eqref{e:che-fai-tocchi?Noon-si-tocca} for all $t\le t_0$.
We first notice that, by construction
\[
    \Omega_v^+(t) \cap B_{4r_0} \subset \Omega_g^+ \qquad\text{for all} \quad 0\le t\le t_1.
\]
Thus $v$ and $u$ are both caloric in $\Omega_v^+ \cap \big(B_{4r_0}\times(0,t_1)\big)$ and so, by the maximum principle, we get that
\[
    v(x,t) \le u(x,t) \qquad \text{for all}\quad x \in B_{4r_0}(z_0)\quad \text{and all}\quad t\le t_1.
\]
This proves \eqref{e:che-fai-tocchi?Noon-si-tocca} for $t\le t_1$. When $t\ge t_1$ we argue by contradiction. Suppose that there are $t\in(t_1,t_0)$ and $x\in B_{4r_0}(z_0)$ such that $u(x,t)<v(x,t)$ and let
\[
t^\ast := \inf\big\{ t > 0 : v(\cdot,t) \not<  u(\cdot,t) \quad \text{in}\quad \Omega_v^+(t) \big\}.
\]
As $u$ is continuous, we have that $v(\cdot,t^\ast) \le u(\cdot,t^\ast)$ on $B_{4r_0}(z_0)$ and, in particular, $\Omega_v^+(t^\ast)\subset\Omega_u^+(t^\ast)$, which of course means that \eqref{e:che-fai-tocchi?Noon-si-tocca} holds for all $t\le t^\ast$. Moreover, one of the following holds:
\begin{enumerate}[(A)]
\item\label{item:caseA-sotto} there exists $x^\ast \in B_{4r_0}(z_0)$ such that $v(x^\ast,t^\ast) = u(x^\ast,t^\ast) >0$;
\item\label{item:caseB-sotto} there exists $x^\ast \in B_{4r_0}(z_0)\cap \Gamma_v^+(t^\ast)\cap\Gamma_u^+(t^\ast)$.
\end{enumerate}
Since the positivity sets are ordered for all $t\le t^\ast$, the maximum principle for caloric functions implies that if case \ref{item:caseA-sotto} happens, then $u$ and $v$ have to coincide for all $t\le t^\ast$, which is a contradiction since by construction $v<u$ on $\partial B_{4r_0}(z_0)\cap\{v>0\}$. In order to rule out case \ref{item:caseB-sotto}, we notice that by construction the contact point $x^\ast$ lies necessarily in the interior of the set $ B_{4r_0}\setminus\Omega_g^+$. 
However, the temperatures are ordered $\de_P \big(B_{4r_0}\setminus\Omega_g^+ \times(t_1,t_0)\big)$ (by case \ref{item:caseA-sotto} and \eqref{e:sotto-time-t_1}) while in the interior $v$ is a classical subsolution of the one-phase Stefan problem.
These lead to a contradiction, since one-phase weak solutions are viscosity solutions \cite{Paiano2026:PhD}, and hence we prove \eqref{e:che-fai-tocchi?Noon-si-tocca}.

\paragraph{The movement of $v$ and $w$}
In the previous steps, we proved that
\[
    \Omega_v^+(t) \cap B_{4r_0}(z_0) \subset \Omega_u^+(t) \cap B_{4r_0}(z_0) \subset \Omega^+_w(t) \cap B_{4r_0}(z_0) \qquad \text{for all}\quad  t \le  t_0.
\]
We now show that the velocities of $\Gamma^+_v(t)$ and $\Gamma^+_w(t)$ are comparable. 
Let $t_2 \le t_0$ be such that $(x_0,t_2) \in \Gamma^+_w(t_2)$. 
Since $w$ and $u$ coincide on $\de_P (B_{4r_0} \times (0,t_0))$ and $\distop(11r_0,\Gamma^+_u(t)) \ge r_0$, the Harnack inequality for caloric functions implies that there exists $C_2 = C_2(d,r_0,\|g\|_{L^1})>0$ such that
\[
    A^\sigma(e_d,t) \le C_2\,a(t) \qquad\text{for all} \quad t_1 \le t \le t_0,
\]
and thus we get that 
\begin{align*}
    |x_0|= 9 r_0 
        = (10-\eta)r_0 - \frac{C_1}{1-\ell} \int_{t_1}^{t_2} A^\sigma(e_d,t) \dif t 
        &\ge (10-\eta)r_0 - \frac{C_1C_2}{(1-\ell)c_1} \int_{t_1}^{t_2} \frac{c_1}{1+\ell}a(t)\dif t \\
        &= (10-\eta)r_0 - \frac{C_1C_2}{(1-\ell)c_1} \big(10r_0-s(t_2)\big).
\end{align*}
and so, since $\eta < \sfrac{1}{2}$, we get that for $\sfrac1C = \frac{c_1}{2C_1C_2}$ it holds that
\begin{equation}\label{e:nofiltration-movement-below}
    10r_0-s(t_2) \ge \frac{1-\ell}{C} \,r_0.
\end{equation}

\paragraph{The density estimate}
In order to get \eqref{e:nofiltration-small}, we combine \eqref{e:nofiltration-movement-below} with the geometric properties of $\Omega_v^+(t)$.
Indeed, the latter is a paraboloid of opening $\sfrac{1}{r_0}$ and so there exists a dimensional constant $\kappa = \kappa(d)>0$ such that 
\[
\big|\Omega^+_u(t_2) \cap B_{r_0}(x_0) \big| \ge \kappa \left(\frac{(1-\ell)^{\sfrac12} r_0}{C}\right)^{d}.
\]
Thus, we define $c_d : (0,+\infty) \to (0,+\infty)$ as
\begin{equation}\label{e:function-density}
c_d(d,\|g\|_{L^1(D)}; r_0) := \kappa\left( \frac{r_0}{C(r_0)}\right)^d,
\end{equation}
and yet
\[
    \big| \Omega^+_u(t_0) \cap B_{r_0}(x_0) \big|
        \ge
     \big| \Omega^+_v(t_2) \cap B_{r_0}(x_0) \big|
        \ge
    c_d(r_0).
\]

\paragraph{The constants}
To complete the proof of \Cref{lemma:nofiltration}, we need to show that $c_d'>0$ and that it depends continuously on the $L^1$-norm of $g$; both properties follow from the theory of the boundary Harnack inequalities.
Indeed, by construction, $t_1(r_0)$ is increasing while the $C^{1,1}$-norms of $\Gamma_w^+(t)$ and $\Gamma_v^+(t)$ decrease as $r_0$ grows; thus $c_d'>0$ is increasing in $r_0$.
Similarly, the constants of the boundary Harnack inequality depends linearly by $\| u \|_{L^\infty}$ (and hence on $\| g\|_{L^1(D)}$, as we are working in $\{t>t_1\}$), and hence we can make a continuous choice of $c_d$.
\end{proof}

\begin{remark}
    The dependence of $c_d$ on the $L^1$-norm of $g$ comes from the Gaussian bounds we use in the proof of \Cref{prop:finite-speed-inner} and from the classical $L^1-L^\infty$ estimates for the caloric functions (with homogeneous Dirichlet boundary conditions).
    If one consider the problem with bounded temperature $\|u\|_{L^\infty} \le K$, then the estimates would have been incorporated only the constant $K>0$. However, we decided to argue with a $L^1$-initial datum in light of \Cref{thm:total-monotonicity-formula}.
\end{remark}

\Cref{lemma:nofiltration} has a stronger formulation if we assume $r_0>\sfrac{1}{10}$ (or any universal bound from below).

\begin{lemma}\label{lemma:nofiltration-large}
    Let $D\subset \R^d$ be an open set and $\ell \in (0,1)$. Let $(u,\mu)$ be an enthalpy solution with homogeneous Dirichlet boundary condition, initial tempearture $g \in H^1_0(D) \cap L^1(D)$, and initial mushy coefficient $h\in L^\infty(D)$ that satisfies \eqref{e:phase-separation-section}.
    Then there exists a constant $c_d = c_d(d,\| g\|_{L^1(D})>0$ with the following property.
    For all $r_0>0\sfrac{1}{10}$ and $R_0 > 10r_0$, such that $\Omega_g^+$ satisfies the two-sided ball condition with radius $R_0$.
    Then, for all $(x_0,t_0) \in \Gamma^+$  such that
    \begin{equation}
        x_0 \in \Gamma_u^+(t_0) \cap \de (\Omega_g^+ + B_{r_0}) \qquad\text{and}\qquad
        t_0 := \min\Big\{ t>0 : \Gamma^+_u(t) \cap \de (\Omega_g^+ + B_{r_0}) \ne\emptyset \Big\},
    \end{equation}
    the following density estimate holds
    \begin{equation}\label{e:nofiltration-big}
        \frac{\big|\Omega_u^+(t_0) \cap B_{r_0}(x_0)\big|}{|B_{r_0}|} \ge (1-\ell)^{\sfrac d2} \, c_d.
    \end{equation}
\end{lemma}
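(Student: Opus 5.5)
The plan is to rerun the proof of \Cref{lemma:nofiltration} and track how each constant depends on $r_0$. The goal is to show that, once $r_0$ is bounded below by the universal threshold $\sfrac{1}{10}$, every constant entering the barrier construction can be chosen independently of $r_0$ after a parabolic rescaling. This should give a \emph{relative} density bound instead of an absolute one.

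\textbf{Step 1: rescale.} I will set $\lambda := 10 r_0 > 1$ and define
\[
u_\lambda(x,t) := u(\lambda x, \lambda^2 t), \qquad \mu_\lambda(x,t) := \mu(\lambda x,\lambda^2 t).
\]
The pair $(u_\lambda,\mu_\lambda)$ is again an enthalpy solution, because the Stefan problem (and the weak formulation \eqref{e:weak_solution}) is invariant under this scaling. Its initial coefficient $h(\lambda\,\cdot)$ still satisfies \eqref{e:phase-separation-section}. The rescaled initial set $\lambda^{-1}\Omega_g^+$ satisfies the two-sided ball condition with radius $R_0/\lambda > 1$. The point $(x_0,t_0)$ becomes a first-arrival point at distance $\sfrac{1}{10}$ from the initial free boundary. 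The initial datum satisfies
\[
\|g(\lambda\,\cdot)\|_{L^1} = \lambda^{-d}\|g\|_{L^1} \le 10^d\|g\|_{L^1},
\]
and sup-norms are invariant. All data of the rescaled problem are therefore controlled by $d$ and $\|g\|_{L^1(D)}$ alone, uniformly in $r_0$.

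\textbf{Step 2: apply the fixed-scale estimate at unit scale.} I apply \Cref{lemma:nofiltration} to $(u_\lambda,\mu_\lambda)$ with radius $\sfrac{1}{10}$. This gives
\[
\big|\Omega^+_{u_\lambda}(t_0/\lambda^2)\cap B_{1/10}(x_0/\lambda)\big| \ge c_d(\sfrac{1}{10})\,(1-\ell)^{\sfrac d2},
\]
where $c_d$ is the function evaluated with the $L^1$-norm $\lambda^{-d}\|g\|_{L^1}$. Scaling back multiplies the left-hand side by $\lambda^d$. Dividing by $|B_{r_0}| = \omega_d\,10^{-d}\lambda^d$ then yields \eqref{e:nofiltration-big}, with constant $\inf c_d(\sfrac1{10})/(\omega_d 10^{-d})$. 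The infimum is taken over $L^1$-norms in $[0,10^d\|g\|_{L^1}]$.

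\textbf{Main obstacle.} The difficulty is making this infimum positive. It requires that the function $c_d$ from \Cref{lemma:nofiltration} depend monotonically, or at least continuously and positively, on the $L^1$-norm. The risk is degeneration as the norm tends to $0$: the waiting time $t_1$ from \Cref{prop:finite-speed-inner} blows up, and the bounds $K_1^{-1} < a(t)$ lose positivity.

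I would handle this by noting that only the ratio of the two boundary Harnack constants $C_1C_2/c_1$ enters \eqref{e:nofiltration-movement-below}. Both barriers are linear in the boundary data, so this ratio should be invariant under multiplying $u$ by a constant. Concretely, I will normalize by $a(t_1)$ and check that $C$ depends only on $d$ and the bound on $\|u\|_{L^\infty}/a$. That quotient is controlled by the parabolic Harnack inequality at unit scale, which the rescaling makes available. If this normalization works, the constant is uniform for all sufficiently small $L^1$-norms, and continuity handles the remaining compact range.
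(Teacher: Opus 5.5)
Your scaling reduction is sound. The map $(x,t)\mapsto(\lambda x,\lambda^2t)$ preserves \eqref{e:weak_solution}, \eqref{e:compatibility_enthalpy-phases}, the Dirichlet condition and the bound on $h$. It sends the first-arrival configuration at distance $r_0$ to one at distance $\sfrac1{10}$ with ball radius $R_0/\lambda>1$. The factor $\lambda^d$ is exactly what turns the absolute bound of \Cref{lemma:nofiltration} into a relative one. (Since $\lambda>1$, the rescaled norm is $\lambda^{-d}\|g\|_{L^1}<\|g\|_{L^1}$, so your bound by $10^d\|g\|_{L^1}$ is true but loose.)

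The paper does not rescale. It reruns the proof of \Cref{lemma:nofiltration} in the original variables and asserts that for $r_0>\sfrac1{10}$ the waiting time, the $L^\infty$ bounds, the Harnack and boundary Harnack constants, and the curvature bounds of $\Gamma_w^+(t)$, $\Gamma_v^+(t)$ stop depending on $r_0$. Hence $C$ in \eqref{e:nofiltration-movement-below} is uniform and $c_d(r_0)=c_d\,r_0^d$ in \eqref{e:function-density}. By scaling the two claims coincide: uniformity as $r_0\to+\infty$ at fixed $\|g\|_{L^1}$ is uniformity of the unit-scale constant as the rescaled $L^1$-norm tends to $0$.

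What your packaging buys is that it shows the statement of \Cref{lemma:nofiltration} alone (mere continuity in the $L^1$-norm) does not close the argument, and it pinpoints where the content lies. It does not let you avoid reopening that proof, so in the end you do the same constant tracking as the paper, in normalized variables. If you accept a constant that also depends on $\ell$, the black box does suffice. By \Cref{thm:finite-melting}, a first arrival at distance $r_0$ forces $r_0\le R_g$. The monotonicity of $c_d(\cdot)$ in \Cref{lemma:nofiltration}, with the original $\|g\|_{L^1}$, then gives the ratio bound $c_d(\sfrac1{10})(1-\ell)^{\sfrac d2}/|B_{R_g}|$. The uniformity you isolate is exactly what removes the $\ell$-dependence hidden in $R_g$.

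Your proposed fix needs adjusting.
\begin{itemize}
\item The dangers you list are not the real ones. A large waiting time $t_1$ only gives more room for the Harnack inequalities.
\item The estimates entering \eqref{e:nofiltration-movement-below} are homogeneous and pointwise in $t\in[t_1,t_0]$: namely $|\nabla w|\le C_2A^\sigma$ on $\Gamma_w^+(t)$, $|\nabla v|\ge c_2\,a(t)$ on $\Gamma_v^+(t)$, and $A^\sigma(e_d,t)\le C_2\,a(t)$. So neither the lower bound $K_1^{-1}<a$ nor any normalization is needed.
\item A single normalization by $a(t_1)$ would not control these estimates over the long interval $[t_1,t_0]$. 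The global quotient $\|u\|_{L^\infty}/a$ is not controlled by any Harnack inequality, since $u$ may be much larger on parts of $\Omega_g^+$ far from $y_0$.
\item The barriers are not linear in the data. Their free boundaries are driven by $A^\sigma$ and $a$. The inf-convolution of $k\widetilde A$ with parameter $\sigma$ equals $k$ times that of $\widetilde A$ with parameter $\sigma\sqrt k$, so $\sigma$ must be adapted to the amplitude.
\end{itemize}
What actually must be checked is that the constants of the three estimates stay bounded. They depend on the geometry of the barrier domains, which sees the amplitude through the absolute speeds (shrinking, which is favourable) and through the length of $[t_1,t_0]$ (growing).

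Finally, \Cref{lemma:nofiltration} is stated for one-phase solutions; its lower barrier uses \Cref{prop:one-phase:positivity-increasing}. Your reduction, like the paper's proof, therefore requires $(u,\mu)$ to be one-phase, which the statement as written omits.
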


\begin{proof}
The proof follows the same steps of \Cref{lemma:nofiltration}, with the following modifications:
\begin{itemize}
    \item the time $t_1>0$ depends on $r_0$ no more, thus $t_1 = t_1(d)$, and the same holds for the constant arising in the Harnack inequalities we used, that degenerate as $t_1\to0$;
    \item there exists $C = C(d)>0$ such that $\| u\|_{L^\infty} \le C \|g\|_{L^1(D)}$; the same holds for $w$, $v$ (and thus $A^\sigma(\alpha,t)$ and $a(t)$);
    \item the $C^{1,1}_x$-norm of $\Omega^+_w$ and $\Omega^+_v$ are bounded by above by $\sfrac{1}{r_0}$; thus if $r_0>\sfrac{1}{10}$, then there exists $C = C(d)>0$ and $c = c(d)>0$ such that
    \[  
        r'(\alpha,t) = -c \,A^\sigma(\alpha,t) \qquad \text{on}\quad \Gamma^+_w(t) 
            \qquad\text{and}\qquad
        s' (t) = - c\, a(t) \qquad \text{on}\quad \Gamma^+_v(t),
    \]
    so the velocities are uniformly bounded by the $L^1$-norm of $g$.
\end{itemize}
Thus, these considerations allow us to define $c_d$ in \eqref{e:function-density} as $c_d(r_0) := c_d \,r_0^{d}$, and thus \eqref{e:nofiltration-big} follows.
\end{proof}

\section{Examples of solutions with asymptotically persistent mushy region}\label{sec:examples}
 
In this section we use the results of \Cref{sec:asymptotic-analysis} to provide examples of two-phase solutions whose mushy region is asymptotically persistent, that is, the mushy region has a measure lower bound that persists as $t\to+\infty$. We will construct the examples by multiple odd reflections via the following proposition, while the persistence of the mushy region will be a consequence of \Cref{thm:finite-melting}.

\begin{proposition}\label{prop:odd-reflections}
    Let $D\subset\R^d$ be an open set, $x_0\in \de D$, and be $\nu \in \mathbb S^{d-1}$ such that $D \cap D_{x_0,\nu} = \emptyset$, where
    \[
         {D}_{x_0,\nu} := \Big\{ x \in \R^d : \tilde x = \tilde x_{x_0,\nu} := \big(x - 2 ((x-x_0)\cdot \nu)\nu\big)  \in D  \Big\},
    \]
    and
    \[
    \widetilde D:=\Interior\Big(D\cup D_{x_0,\nu}\cup\big(\partial D\cap\{(x-x_0) \cdot\nu =0\}\big)\Big).
    \]
    For all functions $v : D \to \R$ (resp. $v : D_\infty \to \R$) let us define $\tilde v = \tilde{v}_{x_0,\nu} : D \cup D_{x_0,\nu} \to \R$ (resp. $\tilde v : D_\infty \cup (D_{x_0,\nu})_{\infty} \to \R$) as 
    \[
    \tilde v(x) := 
    \begin{cases}
        v(x) &\text{if} \quad x \in D,\\
        -v(\tilde x) &\text{if}\quad x \in D_{x_0,\nu}.
    \end{cases}
    \qquad
    \left( \text{resp.}\quad
        \tilde v(x,t) := 
    \begin{cases}
        v(x,t) &\text{if} \quad x \in D,\\
        -v(\tilde x,t) &\text{if}\quad x \in D_{x_0,\nu}.
    \end{cases}
    \right)
    \]
    Let $g \in H^1_0(D)$, $h \in L^\infty(D)$ with $|h(x)| \le 1$, and $(u,\mu)$ the relative enthalpy solution in $D_\infty$ with homogeneous Dirichlet boundary conditions.
    Then $(\tilde u,\tilde\mu)$ is the enthalpy solution in $\widetilde D_\infty$ with initial temperature $\tilde g$, initial mushy coefficient $\tilde h$, and homogeneous Dirichlet boundary conditions.

\end{proposition}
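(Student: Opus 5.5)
The plan is to check directly that $(\tilde u,\tilde\mu)$ satisfies items \ref{item:as:regularity}--\ref{item:as:weakformulation} of \Cref{def:enthalpy_solution} in $\widetilde D_\infty$, with $\tilde u\equiv0$ on $\de\widetilde D\times(0,+\infty)$. Uniqueness of the solution with given data then follows from the comparison principle (\Cref{lemma:comparison-stefan}), which identifies $(\tilde u,\tilde\mu)$ with the enthalpy solution. Write $\Pi:=\{(x-x_0)\cdot\nu=0\}$ and $S:x\mapsto\tilde x$ for the reflection, an isometry of $\R^d$ fixing $\Pi$ and exchanging $D$ and $D_{x_0,\nu}$ (disjoint by hypothesis).

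\textbf{Easy conditions.} Items \ref{item:as:continuity} and \ref{item:as:compatibility} are immediate. Indeed, $\tilde\mu(\cdot,t)=\mu(\cdot,t)$ on $D$ and $\tilde\mu(\cdot,t)=-\mu(S\cdot,t)$ on $D_{x_0,\nu}$, so weak-$\ast$ continuity transfers by the change of variables $x\mapsto Sx$. Compatibility also transfers, since the sign flip sends $\{u>0,\mu=1\}$ to $\{\tilde u<0,\tilde\mu=-1\}$. The bound $|\tilde h|\le1$ is clear.

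\textbf{Regularity.} For item \ref{item:as:regularity} we need $\tilde u(\cdot,t)\in H^1(\widetilde D)$ and $\tilde u\in H^1_{0,L}$. Since $u(\cdot,t)\in H^1_0(D)$, its extension by zero lies in $H^1(\R^d)$. The odd reflection is then
\[
\tilde u=u\,\ind_D-(u\circ S)\,\ind_{D_{x_0,\nu}},
\]
the difference of two $H^1(\R^d)$ functions with disjoint supports. Hence it lies in $H^1_0(\widetilde D)$, because both pieces are limits of $C^\infty_c$ functions supported in $D$, respectively $S(D)$, and both of these sets are contained in $\widetilde D$. The gradient is the piecewise reflected gradient, and $\de_t\tilde u$ is treated the same way.

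\textbf{Weak formulation (main step).} Let $\eta\in C^\infty_c(\widetilde D\times\R)$ and split
\[
\eta=\eta_1+\eta_2,\qquad \eta_{\mathrm{odd}}(x,t)=\tfrac12\big(\eta(x,t)-\eta(Sx,t)\big),\qquad \eta_{\mathrm{even}}(x,t)=\tfrac12\big(\eta(x,t)+\eta(Sx,t)\big).
\]
Since $\tilde u+\tilde\mu$ and $\nabla\tilde u\cdot\nabla(\cdot)$ pair to zero against the even part by oddness, both sides of \eqref{e:weak_solution} for $\eta$ reduce to twice the corresponding integrals over $D$ tested with $\eta_{\mathrm{odd}}|_D$. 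The function $\eta_{\mathrm{odd}}$ vanishes on $\Pi$, but it need not have compact support in $D$: it is supported in $\widetilde D$, whose boundary includes points of $\de D$ off $\Pi$, and it may be nonzero on $\de D\setminus\Pi$ near $\Pi\cap\de D$.

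This is the main obstacle: the test function is not in $C^\infty_c(D\times\R)$, and \eqref{e:weak_solution} on $D$ only allows compactly supported tests. Two facts make it admissible for the Dirichlet solution.
\begin{itemize}
\item On $\de D\cap\widetilde D$ we have $\de D\cap\widetilde D\subset\Pi$, because points of $\de D$ off $\Pi$ near which $\eta$ lives are interior to $\widetilde D$ only if covered by $D\cup S(D)\cup(\de D\cap\Pi)$. Hence $\eta_{\mathrm{odd}}$ vanishes on the relevant part of $\de D$.
\item The restriction $\eta_{\mathrm{odd}}|_D$ is Lipschitz and vanishes on $\de D\cap\spt\eta$, so it belongs to $H^1_0$ in space.
\end{itemize}
I would then approximate $\eta_{\mathrm{odd}}$ by $\eta_{\mathrm{odd}}\,\psi_\delta(\mathrm{dist}(x,\de D))$, where $\psi_\delta$ is a cutoff, and pass to the limit $\delta\to0$. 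The extra term $\int\nabla u\cdot\nabla\psi_\delta\,\eta_{\mathrm{odd}}$ must vanish in the limit. This should follow because $|\eta_{\mathrm{odd}}|\lesssim\mathrm{dist}(x,\Pi)\lesssim\delta$ on the boundary layer, giving a bound of order $\int_{\{\mathrm{dist}<\delta\}}|\nabla u|$. If this fails for rough $D$, I would instead argue by uniqueness: use the regularized solutions from \cite{Paiano_Velichkov2026:ellipticregularization}, for which oddness of the data is preserved, and pass to the limit.
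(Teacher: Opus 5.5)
Your proposal is correct and is essentially the paper's argument: cut the test function off near the reflecting hyperplane $\Pi$ and use the oddness of $\tilde u$ so that only a part of $\eta$ vanishing linearly on $\Pi$ survives in the error term (your $\eta_{\mathrm{odd}}$, the paper's $\eta(x',x_d,t)-\eta(x',-x_d,t)$), which compensates the $1/\sigma$ gradient of the cutoff and leaves $\int|\nabla u|$ over a layer of vanishing measure. The paper simply cuts off smoothly in $|x_d|$, which spares you the (correct) geometric step that for $\delta<\mathrm{dist}(\mathrm{spt}\,\eta,\R^d\setminus\widetilde D)$ the nearest boundary point of any $x$ in your layer lies in $\widetilde D\cap\partial D\subset\Pi$ (since $D_{x_0,\nu}$ is open and disjoint from $D$, hence from $\overline D$), giving $|\eta_{\mathrm{odd}}|\lesssim\mathrm{dist}(x,\Pi)\le\mathrm{dist}(x,\partial D)$ there; in particular $\eta_{\mathrm{odd}}$ vanishes on all of $\partial D$, contrary to your remark that it may be nonzero on $\partial D\setminus\Pi$, and no fallback via regularized solutions is needed.
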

    \begin{proof}
  Without loss of generality, we take $x_0=0$ and $\nu=e_d$.
  We only need to check that $(\widetilde u,\widetilde \mu)$ satisfies the integral identity from \Cref{def:enthalpy_solution} for test functions $\eta\in C_c^\infty(\widetilde D\times(0,+\infty))$. Let $R,T>0$ be such that $\eta(x,t) \equiv 0$ if either $|x|\ge R$ or $t\ge T$. 
We define $p_\sigma\in C^\infty(\R)$ as
        \[
        p_\sigma(z) = 0 \qquad\text{if}\quad z\le \sigma, 
            \qquad 
        p_\sigma(z) = 1 \qquad\text{if}\quad z\ge2\sigma, 
            \qquad\text{and}\qquad 
        0\le p_\sigma'(z) \le \frac{2}{\sigma},
        \]
and we consider the truncated functions $\eta_\sigma := (p_\sigma(x_d) + p_\sigma(-x_d))\eta\in C^\infty_c((\widetilde D\setminus\{x_d=0\})\times(0,+\infty))$. Since $\eta_\sigma$ is supported away from $\{x_d=0\}$, we have that
        \[
            \iint(\widetilde u+\widetilde\mu) \de_t \eta_\sigma - \nabla \widetilde u\cdot \nabla \eta_\sigma \dif x \dif t =0.
        \]
        Through the change of variables $x_d \to (-x_d)$, the previous identity reads as
        \[
            \int_0^{T}\int_{\widetilde D} \big((\widetilde u+\widetilde \mu)\de_t \eta - \nabla \widetilde u\cdot \nabla \eta \big) (p_\sigma(x_d) + p_\sigma(-x_d)) \dif x \dif t 
            = \int_0^{T}\int_{\widetilde D} \eta\nabla \widetilde u\cdot \nabla (p_\sigma(x_d) + p_\sigma(-x_d))  \dif x \dif t.
        \]
        We next estimate the right-hand side of the above identity:
        \begin{align*}
            \left\vert\int_0^{T}\int_{\widetilde D} \eta\nabla \widetilde u\cdot \nabla (p_\sigma(x_d) + p_\sigma(-x_d))  \dif x \dif t\right\vert
            &\le 
            \int_0^{T}\int_{D} |\eta(x',x_d,t) - \eta(x',-x_d,t)|\,|\nabla u|\, p'_\sigma(x_d) \dif x \dif t\\
             &\le 
            2\|\nabla\eta\|_{L^\infty(\R^d\times(0,+\infty))}\int_0^{T}\int_{D} |x_d|\,|\nabla  u|\, p'_\sigma(x_d) \dif x \dif t\\
            &\le
            8\|\nabla\eta\|_{L^\infty(\R^d\times(0,+\infty))} \int_0^T \int_{D\cap E_{R}^\sigma} |\nabla u| \dif x \dif t,
        \end{align*}
        where $E_R^\sigma :=  B_R \cap \{|x_d|\le 2\sigma\}$ and $\eta(x,t) = 0$ for all $x\in \R^d\setminus B_R$ and $t \ge T$. 
        By the H\"older inequality and the following energy estimate 
        $$\|\nabla u(\cdot,t)\|_{L^2(D)}^2\le 8\|g\|_{L^2(D)}^2\quad\text{for all}\quad t\in(0,T),$$ 
        which was proved in \cite[Theorem 1.7 (ii.a)]{Paiano_Velichkov2026:ellipticregularization}, we get
        \[
            \int_0^T \int_{D\cap E_{R}^\sigma} |\nabla u| \dif x \dif t
            \le 
            T |D\cap E_{R}^\sigma|^{\sfrac12}\sup_{t\in(0,T)}\| \nabla  u (\cdot,t)\|_{L^2(D)}  \le C_d  T R^{\frac{d-1}{2}} \sigma^{\sfrac12}\| \nabla  g\|_{L^2(D)},
        \]
        where $C_d$ is a dimensional constant. Finally, since  $(p_\sigma(x_d) + p_\sigma(-x_d))\nearrow 1$, we get that 
          \begin{align*}
            \iint(\widetilde u+\widetilde\mu) \de_t \eta - \nabla \widetilde u\cdot \nabla \eta \dif x \dif t&= \lim_{\sigma\to0}\int_0^{T}\int_{\widetilde D} \big((\widetilde u+\widetilde \mu)\de_t \eta - \nabla \widetilde u\cdot \nabla \eta \big) (p_\sigma(x_d) + p_\sigma(-x_d)) \dif x \dif t\\
            &= \lim_{\sigma\to0}\int_0^{T}\int_{\widetilde D} \eta\nabla \widetilde u\cdot \nabla (p_\sigma(x_d) + p_\sigma(-x_d))  \dif x \dif t=0,
        \end{align*}
        which concludes the proof.
    \end{proof}



\subsection{Solutions with compact initial data}
Finally, we are in a position to provide two non-trivial examples of $H^1$ enthalpy solution with initial temperature $g\not\in L^\infty(\R^d)$ with compact support and for which the mushy region persists in a compact set.

\begin{example}[Symmetric two-phase solutions]\label{ex:bounded-singular}
    Let $d\ge3$, $D := \{x_d>0\}$, $e_d = (0,\dots,0,1)$, $\beta < \frac{d-2}{2}$, $c>0$, and $ g =  g_{\beta,c} \in H^1_0(D)$ be defined as
    \begin{equation}\label{e:example-radial}
          g(x) := c\left(|x-e_d|^{-\beta} -1 \right)^+, 
    \end{equation}
    so that $\Omega_{ g}^+= B_1(e_d) \subset D$.
    We now define $(u,\mu)$ and $(\overline u,\overline \mu)$ as the enthalpy solutions (of the one-phase Stefan problem) in $D\times[0,+\infty)$ and in $\R^d\times [0,+\infty)$ respectively with initial temperature $g$ and initial mushy coefficient $h= \ind_{\Omega_g^+}$.    
    Since $ g$ and $h$ are radially symmetric (with respect to $e_d$), $\Omega^+_{\overline u}(t) = B_{R(t)}(e_d)$ and
    \Cref{lemma:measure-est} implies that for $c < c(d,\beta) = (2^d-1)(d-\beta)|B_1|$ it holds that
    \[
        \Omega_{ u}^+(t) \subset \Omega_{\overline u}^+(t) \subset B_2(e_d) \qquad\text{for all}\quad t>0.
    \]
    Let us define $(\tilde u,\tilde \mu)$ as in \Cref{prop:odd-reflections} (with $D=\{x_d>0\}$, $x_0=0$, and $\nu = e_d$). By \Cref{prop:one-phase:positivity-increasing} $\Omega_{\tilde u}^+(t)$ and $\Omega_{\tilde u}^-(t)$ are increasing in time and so, by comparison with $\overline u$ (and $-\overline u$), it follows that there exists a function $0 < r(t) < \sqrt{2}$, $r'(t)\ge0$ such that 
    \[
        \Gamma_{tp}(t) = B_{r(t)} \cap \{ x_d=0\} \qquad \text{for all}\quad t\ge0.
    \]
    In particular, for all $\alpha \in \mathbb S^{d-1}$ there exists a curve of \emph{branching points} $x_{\alpha}(t) = r(t) \alpha \in \Gamma_{tp}(t)$ such that 
    \[
        B_r(x_\alpha(t)) \cap \Omega_{u}^+(t)\ne\emptyset, 
            \quad
        B_r(x_\alpha(t)) \cap \Omega_{u}^-(t)\ne\emptyset,
            \quad\text{and}\quad
        B_r(x_\alpha(t)) \cap \{u(\cdot,t)=0\}\ne\emptyset,
    \]
    for all $t\ge0$.

\end{example}

\begin{example}[Non symmetric singular example]\label{ex:non-symmetric-bounded-singular}
    Let $d\ge3$, $D= \{x_d>0\}$, and $ g =  g_{\beta,c}$ as in \Cref{ex:bounded-singular}. Let $c\in (0,1)$, we define 
    \begin{equation*}
        \hat g(x) :=
            \begin{cases}
                 g(x',x_d) & \text{if}\quad x_d>0\\
                -c  g(x',-x_d)& \text{if}\quad x_d<0,
            \end{cases}
    \end{equation*}
    and $(\hat u,\hat\mu)$ as the solution of the Stefan problem in $\R^{d+1}_+$ with initial temperature $G$ and initial mushy coefficient $h=\widetilde\ind_{\Omega_g^+}$. By comparison with the one-phase solution $(\overline u,\overline \mu)$, it follows that $\Omega_u(t)^+ \subset B_{2}(e_d)$ for all $t>0$.
    Then, $u\le 0$ in $(\R^d\setminus B_{2}(e_d)) \times[0,+\infty)$, and thus, as in \Cref{prop:one-phase:positivity-increasing} it follows that $u<0$ in $(B_{2}(-e_d) \setminus B_{2}(e_d))\times[0,+\infty)$.
    Analogously, $u>0$ in $(B_{2}(e_d) \setminus B_{2}(-e_d))\times[0,+\infty)$, and thus $(\hat u,\hat \mu)$ is a non-trivial non-symmetric two-phase solution with non-empty mushy region.
\end{example}

\begin{remark}[The case $d=2$]
        Let $d=2$, $D= \{x_2>0\}$, $e_2 = (0,1)$, $\beta <\sfrac12$, and $ g_{\beta,c}\in H^1_0(D)$ defined as
    \begin{equation}\label{e:example-radial-R2}
        g =  g_{\beta,c}(x) := c \left(\left(-\log\left\vert x-\frac12e_2 \right\vert\right)^{\beta}-(\log2)^\beta\right)_+.
    \end{equation}
    Then, the same arguments of \Cref{ex:bounded-singular,ex:non-symmetric-bounded-singular} apply, and prove that there exist non-trivial solutions with persistent mushy region in $\R^2$.
\end{remark}

\begin{center}
    \begin{figure}[ht]
	\begin{tikzpicture}[use Hobby shortcut,scale=1]
        \begin{scope}[shift={(-4,0)},scale=0.7]
			\clip (0,0) circle (4);
            \draw[dashed] (-4,0) -- (4,0);
            
			\draw[thick, fill = MidnightBlue!10] 
                (-2,0) -- (0,0) -- (2,0) .. (2.1,0.01) .. (3,1) .. (0,3.5) .. (-3,1) .. (-2.1,0.01) .. (-2,0);
			\draw[thick, fill = MidnightBlue!20,closed] 
                (-1.5,1) .. (0,0).. (1.5,1) .. (0,2.5);
                
			\draw[thick, fill = black!7,yscale =-1]     
                (-2,0) -- (0,0) -- (2,0) .. (2.1,0.01) .. (3,1) .. (0,3.5) .. (-3,1) .. (-2.1,0.01) .. (-2,0);
			\draw[thick, fill = black!14,closed,yscale=-1]     
                (-1.5,1) .. (0,0).. (1.5,1) .. (0,2.5);

			\draw[dashed, thick] (0,0) circle (3.99);

						\node at (0,-1) {$\Omega_u^-(0)$};
			\node at (0,1) {$\Omega_u^+(0)$};
			\node at (0,3) {$\Omega_u^+(t)$};
			\node at (0,-3) {$\Omega_u^-(t)$};

		\end{scope}

        \begin{scope}[shift={(3,0)},scale=0.7]
			\clip (0,0) circle (4);
            \draw[dashed](-4,0) --(4,0);
            
			\draw[thick, fill = MidnightBlue!10] 
                (-1.65,-0.3) --  (1.65,-0.3) .. (1.9,-0.345) .. (2,-0.37) .. (3,1) .. (0,3.5) .. (-3,1) .. (-2,-0.37) .. (-1.9,-0.345) .. (-1.65,-0.3);
                
			\draw[thick, fill = black!7,shift={(0,-0.3)},yscale =-1,scale=0.8]
                (-2,0) -- (0,0) -- (2,0) .. (2.1,0.01) .. (3,1) .. (0,3.5) .. (-3,1) .. (-2.1,0.01) .. (-2,0);

			\draw[dashed, thick] (0,0) circle (3.99);

			\node at (0,-1) {$\Omega_u^-(t)$};
			\node at (0,1) {$\Omega_u^+(t)$};

		\end{scope}

    \end{tikzpicture}
         \caption{The figure on the left represents as solution as in \Cref{ex:bounded-singular}. On the right the non-symmetric case \Cref{ex:non-symmetric-bounded-singular}. The dashed line corresponds to $\{x_d=0\}$.}
        \label{fig:ex:bounded-singular}
    \end{figure}
\end{center}

\begin{remark}
    In \Cref{ex:bounded-singular,ex:non-symmetric-bounded-singular} we do not fully rely on \Cref{thm:finite-melting}. Indeed, as we rely on a radial competitor, the existence of such solutions follows directly from \Cref{lemma:measure-est}. This works properly radial initial data, but it fails to guarantee that $R_g \to0$ as $g\to0$ for non-radial ones.
\end{remark}

\subsection{Non compact initial temperature}

In this section we provide a two examples of solutions with persistent mushy region and non compact initial support. 
To present the first one, we need no further results than \Cref{lemma:measure-est}.

\begin{example}[Fat dense set]
	Let $\{q_j\}_{j\in\NN}\subset\mathbb Q^d$ be a dense family of rational points in $D=\{x_d>0\}$. We define the following open set $\Omega \subset D$ as follows
	\[
	\Omega := \bigcup_{j\in\NN} \big(B_{2^{-j}}(q_j) \cap D\big),
	\]
	and for $\beta,c>0$ let $ f =  f_{\beta,c}\in H^1_0(D)$ be defined as
	\[
	 f(x) := (x_d \wedge 1) \left(\sum_{j\in\NN} 2^{-j} g_{\beta,c}\left(2^{j}(x+e_d-q_j)\right)\right),
	\]
	where  $ g_{\beta,c}$ are defined as in \eqref{e:example-radial} (or \eqref{e:example-radial-R2}, if $d=2$).
	Then $f\in H^1_0(D)\cap L^1(D)$ and $\Omega_f^+$ is an open dense set with finite measure.
	Let $( u, \mu)$ be the one-phase enthalpy solution of the Stefan problem in $D\times[0,+\infty)$ with initial temperature $f$ and initial mushy coefficient $h=\ind_\Omega$, then for all $t\ge0$ it holds
	\[
		|\Omega_{ u}^+(t)\setminus \Omega_{f}^+| \le \|\overline f \|_{L^1(D)}.
	\]
	In particular, even if $\Omega_{f}$ is dense, $\Omega_{u}(t)$ does not invade the whole space.
	
	Taking $\tilde f \in H^1(\R^d)$ defined from $f$ as in \Cref{prop:odd-reflections}, the same strategy provides a two-phase solution with dense initial set that do not invade the whole space as well.
\end{example}

The previous example, despite being interesting thanks to its singular nature, does not present definite branches as the examples we did in the compact case.

\begin{example}\label{ex:non-compact-one}
Let $d\ge2$, $D := \{x=(x_1,x_2,x') \in \R\times\R\times\R^{d-2} : x_1>0, \,x_2>0\}$, 
the function $\phi : [0,+\infty \to \R$ be defined as
\[
    \phi(x_1) =
        1 + \sqrt{2x_1(1-x_1)} \quad\text{if}\quad 0 < x_1 < 1 \qquad\text{and}\qquad 
        0 \quad \text{if}\quad x_1\ge1,
\]
and $\Omega\subset D$ be an open set defined as
\[
    \Omega := \Big\{ x=(x_1,x_2,x') \in D :  x_2 > \phi(x_1)\Big\}.
\]
and let us define $g \in H^1_0(D)$ as
\[
    g(x) := \big(\delta \wedge \distop(x,\R^d\setminus\Omega) \big) e^{-|x|^2},
\]
where $\delta \ll 1$ is a constant to be chosen later. Let $(u,\mu)$ and $(\overline u,\overline\mu)$ be the enthalpy solutions with initial temperature $g$ and mushy coefficient $h = \ind_{\Omega}$ in $D_\infty$ and $\R^{d+1}_+$ respectively.
The Comparison principle and \Cref{thm:finite-melting} imply that, for $\delta >0$ sufficiently small, then
\[
    \Omega_u^+(t) \subset \Omega_{\overline u}^+(t) \subset \R^d\setminus B_{\sfrac14} \qquad\text{for all}\quad t\ge0.
\]
Applying \Cref{prop:odd-reflections} two times (first with respect to $e_1$ and then with respect to $e_d$) we get a two-phase solution $(\tilde u,\tilde \mu)$ in $\R^{d+1}_+$ defined as follows
\[
    \tilde u(x,t) =
        \begin{cases}
        \begin{array}{rl}
            u(|x_1|,|x_2|,x') &\text{if} \quad x_1 x_2 >0\\
            -u(|x_1|,|x_2|,x') &\text{if} \quad x_1 x_2 <0,
        \end{array}
        \end{cases}
        \hspace{-0.15cm}
        \text{and}\quad
    \tilde \mu(x,t) =
        \begin{cases}
        \begin{array}{rl}
            \mu(|x_1|,|x_2|,x') &\text{if} \quad x_1 x_2 >0\\
            -\mu(|x_1|,|x_2|,x') &\text{if} \quad x_1 x_2 <0.
        \end{array}
        \end{cases}\hspace{-0.2cm}
\]
As for \Cref{ex:bounded-singular}, $\Omega_{\tilde u}^+(t)$ and $\Omega_{\tilde u}^-(t)$ are non decreasing in time and so (by comparison) there exists $1/4 < r(t) < 1$, $r'(t)\ge0$ such that
\[
    \Gamma_{tp}(t) = \Big\{(x_1,x_2,x') \in \R\times\R\times\R^{d-2} : x_1 = r(t) \; \text{and}\; x_2 =0 \quad \text{or} \quad x_1 = 0\; \text{and}\; x_2 = r(t) \Big\}.
\]
\end{example}

\begin{example}\label{ex:multiple-sections}
    With the same construction of \Cref{ex:non-compact-one}, we can produce other examples. For instance, let $D \subset \R^d$ be defined as $D:= D_k \times \R^{d-2}$ where for all $k\in \NN$ and $k\ge 2$, $D_k \subset\R^2 = \mathbb C$ is defined as follows
    \[
        D_k := \left\{ z = r e^{i\theta} : 0 < \theta < \frac{\pi}{k} \right\}.
    \]
    Then, we define $\Omega_2$ as $\Omega$ of \Cref{ex:non-compact-one}, and for all $k\ge3$ we define
    \[
        \Omega_k := \big\{ (z,x') \in D_k \times \R^{d-2} : z^{\sfrac k2} \in \Omega_2 \big\} \qquad\text{and}\qquad g_k(x) := \big( \delta \wedge \distop(x,\R^d\setminus \Omega_k) \big) e^{-|x|^2},
    \]
    and we denote by $(u_k,\mu_k)$ the one-phase weak solution  of the Stefan problem in $D$ with initial datum $g_k$; we notice that in this construction the set $D\setminus \Omega_k$ is bounded.
   By making $2k$ odd reflections of $u_k$, $\mu_k$, and $g_k$ across the rays $\theta=j\pi/k$, $j=1,\dots, 2k$, (see \Cref{fig:four-phases} for the cases $k=2$ and $k=3$) we obtain 
   a two-phase solution $(\tilde u,\tilde \mu)$ in $\R^{d+1}_+$. We notice that, by construction, the region $\text{Int}(\{\widetilde u=0\})$ is contained in a ball centered in the origin. By choosing the parameter $\delta$ in the definition of $g_k$ small enough, we get that $\emptyset\ne\Gamma_{tp}(t) \subset \de D$ for all $t\ge0$ and 
    \[
        B_{\sfrac14} \subset \R^d \setminus \big(\Omega_{\tilde u}^+(t) \cup \Omega_{\tilde u}^-(t) \big) \qquad\text{for all}\quad t\ge0.
    \]
\end{example}

\begin{remark}\label{oss:GZ-application-2k-symmetry}
    In \Cref{ex:multiple-sections}, for $k\ge3$ the persistence of the mushy region at the origin can be deduced also from \cite{GotzZaltman95:MushyavoidAngles}. Indeed, from \cite{GotzZaltman95:MushyavoidAngles} it follows that exists $r(t)>0$ such that 
    \[
        B_{r(t)} \cap D_k \subset D_k \setminus \Omega_{u_k}^+(t) \qquad\text{for all}\quad t\ge0,
    \]
    and so $0\in\text{Int}(\{\widetilde u(\cdot,t)=0\})$ for all times $t\ge0$. 
    However, \cite{GotzZaltman95:MushyavoidAngles} applies only when $g \in L^\infty(\R^d)$ and does not guarantee that $r(t)>c$ for all $t\ge0$. We also notice that this argument can only be applied to $2k$-symmetric solutions as the one constructed in \Cref{ex:multiple-sections}.
\end{remark}

\begin{center}
\begin{figure}[ht]
\begin{tikzpicture}[use Hobby shortcut,thick,scale=0.9]


            

            

            

            


\begin{scope}[xshift=100,scale=0.7]
    \clip (-4,-4) rectangle (4,4);

    \draw[dashed,gray] (0,0) -- (0:5);
    \draw[dashed,gray] (0,0) -- (60:5);
    \draw[dashed,gray] (0,0) -- (120:5);
    \draw[dashed,gray] (0,0) -- (180:5);
    \draw[dashed,gray] (0,0) -- (240:5);
    \draw[dashed,gray] (0,0) -- (300:5);
    
    \begin{scope}[scale=0.8]
        \clip (-4.5,-4.5) rectangle (4.5,4.5);

        \draw[fill = MidnightBlue!10,scale=0.4]
            (60:20) -- (60:10) .. (60:5) .. (55:3) .. (50:2.5)  .. (30:2) .. (10:2.5) ..  (5:3) .. (0:5)  .. (0:10) -- (20,0) -- (50,50);
            
        \draw[fill = MidnightBlue!20]
            (60:20) -- (60:10) .. (60:5) .. (55:3) .. (50:2.5)  .. (30:2) .. (10:2.5) ..  (5:3) .. (0:5)  .. (0:10) -- (20,0) -- (50,50);


        \begin{scope}[rotate=120] 
        \draw[fill = MidnightBlue!10,scale=0.4]
            (60:20) -- (60:10) .. (60:5) .. (55:3) .. (50:2.5)  .. (30:2) .. (10:2.5) ..  (5:3) .. (0:5)  .. (0:10) -- (20,0) -- (50,50);
            
        \draw[fill = MidnightBlue!20]
            (60:20) -- (60:10) .. (60:5) .. (55:3) .. (50:2.5)  .. (30:2) .. (10:2.5) ..  (5:3) .. (0:5)  .. (0:10) -- (20,0) -- (50,50);

        \end{scope}

                \begin{scope}[rotate=-120] 
        \draw[fill = MidnightBlue!10,scale=0.4]
            (60:20) -- (60:10) .. (60:5) .. (55:3) .. (50:2.5)  .. (30:2) .. (10:2.5) ..  (5:3) .. (0:5)  .. (0:10) -- (20,0) -- (50,50);
            
        \draw[fill = MidnightBlue!20]
            (60:20) -- (60:10) .. (60:5) .. (55:3) .. (50:2.5)  .. (30:2) .. (10:2.5) ..  (5:3) .. (0:5)  .. (0:10) -- (20,0) -- (50,50);

        \end{scope}

        \begin{scope}[rotate=60] 
        \draw[fill = black!7,scale=0.4]
            (60:20) -- (60:10) .. (60:5) .. (55:3) .. (50:2.5)  .. (30:2) .. (10:2.5) ..  (5:3) .. (0:5)  .. (0:10) -- (20,0) -- (50,50);
            
        \draw[fill = black!14]
            (60:20) -- (60:10) .. (60:5) .. (55:3) .. (50:2.5)  .. (30:2) .. (10:2.5) ..  (5:3) .. (0:5)  .. (0:10) -- (20,0) -- (50,50);

        \end{scope}

        \begin{scope}[rotate=180] 
        \draw[fill = black!7,scale=0.4]
            (60:20) -- (60:10) .. (60:5) .. (55:3) .. (50:2.5)  .. (30:2) .. (10:2.5) ..  (5:3) .. (0:5)  .. (0:10) -- (20,0) -- (50,50);
            
        \draw[fill = black!14]
            (60:20) -- (60:10) .. (60:5) .. (55:3) .. (50:2.5)  .. (30:2) .. (10:2.5) ..  (5:3) .. (0:5)  .. (0:10) -- (20,0) -- (50,50);

        \end{scope}

        \begin{scope}[rotate=-60] 
        \draw[fill = black!7,scale=0.4]
            (60:20) -- (60:10) .. (60:5) .. (55:3) .. (50:2.5)  .. (30:2) .. (10:2.5) ..  (5:3) .. (0:5)  .. (0:10) -- (20,0) -- (50,50);
            
        \draw[fill = black!14]
            (60:20) -- (60:10) .. (60:5) .. (55:3) .. (50:2.5)  .. (30:2) .. (10:2.5) ..  (5:3) .. (0:5)  .. (0:10) -- (20,0) -- (50,50);
            
        \end{scope}
    \end{scope}

\end{scope}

\begin{scope}[xshift=-100,scale=0.7]
    \begin{scope}[scale=0.8]
        \clip (-4.5,-4.5) rectangle (4.5,4.5);

        \draw[fill = MidnightBlue!10,scale=0.4]
            (0,20) -- (0,5) .. (0,4) .. (0.5,1.5) .. (1.5,0.5) ..(4,0) .. (5,0) -- (20,0) --(20,20);
            
        \draw[fill = MidnightBlue!20]
            (0,20) -- (0,5) .. (0,4) .. (0.5,1.5) .. (1.5,0.5) ..(4,0) .. (5,0) -- (20,0) --(20,20);

        \begin{scope}[scale=-1] flippato
        \draw[fill = MidnightBlue!10,scale=0.4]
            (0,20) -- (0,5) .. (0,4) .. (0.5,1.5) .. (1.5,0.5) ..(4,0) .. (5,0) -- (20,0) --(20,20);
            
        \draw[fill = MidnightBlue!20]
            (0,20) -- (0,5) .. (0,4) .. (0.5,1.5) .. (1.5,0.5) ..(4,0) .. (5,0) -- (20,0) --(20,20);
        \end{scope}

        \begin{scope}[xscale=-1] 
        \draw[fill = black!7,scale=0.4]
            (0,20) -- (0,5) .. (0,4) .. (0.5,1.5) .. (1.5,0.5) ..(4,0) .. (5,0) -- (20,0) --(20,20);
            
        \draw[fill = black!14]
            (0,20) -- (0,5) .. (0,4) .. (0.5,1.5) .. (1.5,0.5) ..(4,0) .. (5,0) -- (20,0) --(20,20);
        \end{scope}

        \begin{scope}[yscale=-1] 
        \draw[fill = black!7,scale=0.4]
            (0,20) -- (0,5) .. (0,4) .. (0.5,1.5) .. (1.5,0.5) ..(4,0) .. (5,0) -- (20,0) --(20,20);
            
        \draw[fill = black!14]
            (0,20) -- (0,5) .. (0,4) .. (0.5,1.5) .. (1.5,0.5) ..(4,0) .. (5,0) -- (20,0) --(20,20);
        \end{scope}
    \end{scope}

    \draw[dashed,gray] (-4,0) -- (4,0);
    \draw[dashed,gray] (0,-4) -- (0,4);
\end{scope}
    
\end{tikzpicture}
    
    \caption{On the left, the example of \Cref{ex:non-compact-one}, on the right the one of \Cref{ex:multiple-sections} with $k=3$. The darker colors represent the original phases, the lighter ones their evolution through time.}
    \label{fig:four-phases}
\end{figure}
\end{center}

\vspace{-0.5cm}

\begin{example}\label{ex:non-compact-2}
    In the case of \Cref{ex:non-compact-one,ex:multiple-sections}, the gaussian weight make possible to argue avoiding \Cref{thm:finite-melting} with special radial one-phase barriers, in a similar way that works for \Cref{ex:bounded-singular}. Let $D$ and $\Omega_k$ be as in \Cref{ex:non-compact-one,ex:multiple-sections} and let 
    \[
        \hat \Omega_k := \Omega_k \setminus \big(e_k + \Omega_k  \big),
    \]
    where $e_k:= (\cos(\pi/2k),\sin(\pi/2k),0)$. Consider as initial temperature the function 
    \[
        \hat g_k(x) := (\delta \wedge \distop(x,\R^d\setminus \hat\Omega_k) |x-e_k|^{-d+\sfrac12},
    \]
    and let $(u_k,\mu_k)$ be the one-phase solution in $D$ with initial datum $\hat g_k$. Proceeding as in \Cref{ex:non-compact-one,ex:multiple-sections}, starting from $(u_k,\mu_k)$, we construct a two-phase $(2k)$-symmetric solution $(\tilde u,\tilde\mu)$ in $\R^{d+1}_+$ with persistent mushy region that contains a ball $B_\rho$ for all times $t\ge 0$. 
    We notice that in this case the argument from \Cref{oss:GZ-application-2k-symmetry} cannot be applied, since $|x-e_k|^{-d+\sfrac12}\not\in L^1(\R^d)$ and $|x|^{-d+\sfrac12}\not\in L^1(\R^{d-1})$ and so the temperature does not decays sufficiently fast to lie below a radial competitor.
\end{example}

\vspace{-0.3cm}

\begin{center}
\begin{figure}[ht]
\begin{tikzpicture}[use Hobby shortcut,thick,scale=0.9]

\begin{scope}[xshift=-100,scale=0.7]

    \draw[dashed,gray] (-4,0) -- (4,0);
    \draw[dashed,gray] (0,-4) -- (0,4);

    \begin{scope}[scale=0.8]
        \clip (-4.5,-4.5) rectangle (4.5,4.5);

        \draw[fill = MidnightBlue!10,scale=0.4]
            (0,20) -- (0,5) .. (0,4) .. (0.8,1.5) .. (1.5,0.8) ..(4,0) .. (5,0) -- (20,0)
            -- (20,3) -- (15,3) .. (6,5.5) 
            .. (5.5,6) .. (3,15) -- (3,20);
            
        \draw[fill = MidnightBlue!20]
            (0,20) -- (0,5) .. (0,4) .. (0.5,1.5) .. (1.5,0.5) ..(4,0) .. (5,0) -- (20,0)
            -- (20,1) -- (6,1) .. (5,1) .. (2.5,1.5) .. (1.5,2.5) .. (1,5) -- (1,20);

        \begin{scope}[scale=-1] 
        \draw[fill = MidnightBlue!10,scale=0.4]
            (0,20) -- (0,5) .. (0,4) .. (0.8,1.5) .. (1.5,0.8) ..(4,0) .. (5,0) -- (20,0)
            -- (20,3) -- (15,3) .. (6,5.5) 
            .. (5.5,6) .. (3,15) -- (3,20);
            
        \draw[fill = MidnightBlue!20,scale=1]
            (0,20) -- (0,5) .. (0,4) .. (0.5,1.5) .. (1.5,0.5) ..(4,0) .. (5,0) -- (20,0)
            -- (20,1) -- (6,1) .. (5,1) .. (2.5,1.5) .. (1.5,2.5) .. (1,5) -- (1,20);
        \end{scope}

        \begin{scope}[yscale=-1] 
        
        \draw[fill = black!7,scale=0.4]
            (0,20) -- (0,5) .. (0,4) .. (0.8,1.5) .. (1.5,0.8) ..(4,0) .. (5,0) -- (20,0)
            -- (20,3) -- (15,3) .. (6,5.5) 
            .. (5.5,6) .. (3,15) -- (3,20);
            
        \draw[fill = black!14,scale=1]
            (0,20) -- (0,5) .. (0,4) .. (0.5,1.5) .. (1.5,0.5) ..(4,0) .. (5,0) -- (20,0)
            -- (20,1) -- (6,1) .. (5,1) .. (2.5,1.5) .. (1.5,2.5) .. (1,5) -- (1,20);
        \end{scope}        

        \begin{scope}[xscale=-1] 
        
        \draw[fill = black!7,scale=0.4]
            (0,20) -- (0,5) .. (0,4) .. (0.8,1.5) .. (1.5,0.8) ..(4,0) .. (5,0) -- (20,0)
            -- (20,3) -- (15,3) .. (6,5.5) 
            .. (5.5,6) .. (3,15) -- (3,20);
            
        \draw[fill = black!14,scale=1]
            (0,20) -- (0,5) .. (0,4) .. (0.5,1.5) .. (1.5,0.5) ..(4,0) .. (5,0) -- (20,0)
            -- (20,1) -- (6,1) .. (5,1) .. (2.5,1.5) .. (1.5,2.5) .. (1,5) -- (1,20);
        \end{scope}             
        
    \end{scope}

\end{scope}

\begin{scope}[xshift=100,scale=0.7]
    \clip (-4,-4) rectangle (4,4);
    \draw[dashed,gray] (0,0) -- (0:5);
    \draw[dashed,gray] (0,0) -- (60:5);
    \draw[dashed,gray] (0,0) -- (120:5);
    \draw[dashed,gray] (0,0) -- (180:5);
    \draw[dashed,gray] (0,0) -- (240:5);
    \draw[dashed,gray] (0,0) -- (300:5);

            \begin{scope}[scale=0.8]
        \clip (-4.5,-4.5) rectangle (4.5,4.5);

        \draw[fill = MidnightBlue!10,scale=0.6]
            (60:40) -- (60:10) .. (60:5) .. (55:3) .. (50:2.5)  .. (30:2) .. (10:2.5) ..  (5:3) .. (0:5)  .. (0:10) -- (20,0) 
            -- (5:20) -- (2:14) .. (8:9.5)  .. (18:7) ..(30:6.5) ..(42:7) .. (52:9.5) .. (58:14) -- (55:40);
            
        \draw[fill = MidnightBlue!20]
            (60:20) -- (60:10) .. (60:5) .. (55:3) .. (50:2.5)  .. (30:2) .. (10:2.5) ..  (5:3) .. (0:5)  .. (0:10) -- (20,0) 
            -- (5:20) -- (5:11) .. (5:6.5) .. (10:4.5) .. (15:4) ..(30:3.5) ..(45:4) .. (50:4.5) .. (55:6.5) .. (55:11) -- (55:20);

        \begin{scope}[rotate=120] 
        \draw[fill = MidnightBlue!10,scale=0.6]
            (60:40) -- (60:10) .. (60:5) .. (55:3) .. (50:2.5)  .. (30:2) .. (10:2.5) ..  (5:3) .. (0:5)  .. (0:10) -- (20,0) 
            -- (5:20) -- (2:14) .. (8:9.5)  .. (18:7) ..(30:6.5) ..(42:7) .. (52:9.5) .. (58:14) -- (55:40);
            
        \draw[fill = MidnightBlue!20]
            (60:20) -- (60:10) .. (60:5) .. (55:3) .. (50:2.5)  .. (30:2) .. (10:2.5) ..  (5:3) .. (0:5)  .. (0:10) -- (20,0) 
            -- (5:20) -- (5:11) .. (5:6.5) .. (10:4.5) .. (15:4) ..(30:3.5) ..(45:4) .. (50:4.5) .. (55:6.5) .. (55:11) -- (55:20);

        \end{scope}

                \begin{scope}[rotate=-120] 
        \draw[fill = MidnightBlue!10,scale=0.6]
            (60:40) -- (60:10) .. (60:5) .. (55:3) .. (50:2.5)  .. (30:2) .. (10:2.5) ..  (5:3) .. (0:5)  .. (0:10) -- (20,0) 
            -- (5:20) -- (2:14) .. (8:9.5)  .. (18:7) ..(30:6.5) ..(42:7) .. (52:9.5) .. (58:14) -- (55:40);
            
        \draw[fill = MidnightBlue!20]
            (60:20) -- (60:10) .. (60:5) .. (55:3) .. (50:2.5)  .. (30:2) .. (10:2.5) ..  (5:3) .. (0:5)  .. (0:10) -- (20,0) 
            -- (5:20) -- (5:11) .. (5:6.5) .. (10:4.5) .. (15:4) ..(30:3.5) ..(45:4) .. (50:4.5) .. (55:6.5) .. (55:11) -- (55:20);

        \end{scope}

        \begin{scope}[rotate=60] 
        \draw[fill = black!7,scale=0.6]
            (60:40) -- (60:10) .. (60:5) .. (55:3) .. (50:2.5)  .. (30:2) .. (10:2.5) ..  (5:3) .. (0:5)  .. (0:10) -- (20,0) 
            -- (5:20) -- (2:14) .. (8:9.5)  .. (18:7) ..(30:6.5) ..(42:7) .. (52:9.5) .. (58:14) -- (55:40);
            
        \draw[fill = black!14]
            (60:20) -- (60:10) .. (60:5) .. (55:3) .. (50:2.5)  .. (30:2) .. (10:2.5) ..  (5:3) .. (0:5)  .. (0:10) -- (20,0) 
            -- (5:20) -- (5:11) .. (5:6.5) .. (10:4.5) .. (15:4) ..(30:3.5) ..(45:4) .. (50:4.5) .. (55:6.5) .. (55:11) -- (55:20);

        \end{scope}

        \begin{scope}[rotate=180] 
        \draw[fill = black!7,scale=0.6]
            (60:40) -- (60:10) .. (60:5) .. (55:3) .. (50:2.5)  .. (30:2) .. (10:2.5) ..  (5:3) .. (0:5)  .. (0:10) -- (20,0) 
            -- (5:20) -- (2:14) .. (8:9.5)  .. (18:7) ..(30:6.5) ..(42:7) .. (52:9.5) .. (58:14) -- (55:40);
            
        \draw[fill = black!14]
            (60:20) -- (60:10) .. (60:5) .. (55:3) .. (50:2.5)  .. (30:2) .. (10:2.5) ..  (5:3) .. (0:5)  .. (0:10) -- (20,0) 
            -- (5:20) -- (5:11) .. (5:6.5) .. (10:4.5) .. (15:4) ..(30:3.5) ..(45:4) .. (50:4.5) .. (55:6.5) .. (55:11) -- (55:20);

        \end{scope}

        \begin{scope}[rotate=-60] 
        \draw[fill = black!7,scale=0.6]
            (60:40) -- (60:10) .. (60:5) .. (55:3) .. (50:2.5)  .. (30:2) .. (10:2.5) ..  (5:3) .. (0:5)  .. (0:10) -- (20,0) 
            -- (5:20) -- (2:14) .. (8:9.5)  .. (18:7) ..(30:6.5) ..(42:7) .. (52:9.5) .. (58:14) -- (55:40);
            
        \draw[fill = black!14]
            (60:20) -- (60:10) .. (60:5) .. (55:3) .. (50:2.5)  .. (30:2) .. (10:2.5) ..  (5:3) .. (0:5)  .. (0:10) -- (20,0) 
            -- (5:20) -- (5:11) .. (5:6.5) .. (10:4.5) .. (15:4) ..(30:3.5) ..(45:4) .. (50:4.5) .. (55:6.5) .. (55:11) -- (55:20);
            
        \end{scope}
    \end{scope}       
        
    \end{scope}

\end{tikzpicture}
    
    \caption{In the figure the solutions granted by \Cref{ex:non-compact-2} in the cases \Cref{ex:non-compact-one} (on the left) and \Cref{ex:multiple-sections} (on the right). }
    \label{fig:four-phases-2}
\end{figure}
\end{center}

\vspace{-0.5cm}

\begin{example}
    The same approach of \Cref{ex:non-compact-one}, produces examples with countable many branching regions and non-symmetric ones. 
    Indeed, for $k=1,2$ we consider two families of nonnegative bump function $\{\rho_i^{k}\}_{i\in\NN}$ with the following properties
    \begin{itemize}
        \item for all $k=1,2$ they hold 
        \[
            \sup_{j}\|\rho_{j}^k\|_{L^\infty} \le 1
            \qquad\text{and}\qquad
            \inf_{j} \|\rho_{j}^k\|_{L^1} \le 1/4;
        \]
        \item there exists two families of points $\{x_j^k\}_{j\in\NN} \subset \R^{d-1}$ such that for $k=1,2$ it holds
        \[
        \inf_{j\in\NN}\distop(x_{j}^k,x_{i}^k) \ge 2, \qquad \rho_j^k(x_j^k) \ge 1/2, \qquad\text{and}\qquad \spt \rho_j^k \subset B_{1}(x_j^k) \qquad\text{for all}\quad j\in\NN;
        \]
        \item it holds
        \[
            (x_{j}^k)_k \ge 2 \qquad\text{for}\quad k=1,2 \qquad\text{and}\qquad\text{for all}\quad j\in\NN.
        \]
    \end{itemize}
    Let then consider the set $\Omega\subset D$ defined as follows
    \[
        \Omega = 
        \left\{x \in D : 
            x_2 > \phi(x_1) + \sum_{i\in\NN} \rho_{i}^1(x_1,x') 
            \quad\text{and}\quad 
            x_1 > \phi(x_2) +  \sum_{k\in\NN} \rho_j^2(x_2,x')
        \right\}.
    \]
    Taking $g$ as in \Cref{ex:non-compact-one} and $h = \ind_\Omega$, the previous construction provides a family of solutions with countable many disjoint and persistent mushy regions. The same holds if we consider $\hat \Omega$ of \Cref{ex:multiple-sections}, \ref{ex:non-compact-2}, or if we allow more general and non odd-symmetric configurations as for \Cref{ex:non-symmetric-bounded-singular}.
\end{example}


\subsection*{Use of AI} No AI tools were used to produce any of the mathematical content of this paper, nor to generate parts of the text; the problem, the strategy, the proofs and the text were all conceived and written exclusively by the authors. 

\subsection*{{Acknowledgments}}
The authors acknowledge the MIUR Excellence Department Project awarded to the Department of Mathematics, University of Pisa, CUP I57G22000700001. 
F.P. is also partially supported by the INdAM - GNAMPA Project "Struttura fine e regolarita' in problemi variazionali non-lineari" (CUP E53C25002010001).
B.V. acknowledges the support of the European Research Council (ERC) via the project ERC
FiRM - Fine structure and regularity of stationary and moving free boundaries (grant agreement No. 101230705).

\bibliographystyle{amsalpha}
\bibliography{Stefan}

\end{document}